\newtheorem{thm}{Theorem}[section]
\newtheorem{prop}[thm]{Proposition}
\newtheorem{lem}[thm]{Lemma}
\newtheorem{cor}[thm]{Corollary}
\theoremstyle{definition}
\newtheorem{definition}[thm]{Definition}
\theoremstyle{remark}
\newtheorem{rem}[thm]{Remark}
\numberwithin{equation}{section}
\newcommand{\R}{\mathbb{R}}  
\newcommand{\D}{\mathbb{D}}
\newcommand{\C}{\mathbb{C}}  
\newcommand{\Z}{\mathbb{Z}}  
\newcommand{\N}{\mathbb{N}}  
\newcommand{\classB}{\mathcal{B}}
\newcommand{\Hr}{\mathbb{H}_r}
\newcommand{\scs}{\scriptsize}
\DeclareMathOperator{\interior}{int}
\DeclareMathOperator{\diam}{diam}
\DeclareMathOperator{\dist}{dist} 
\begin{document}


\title{Univalent wandering domains in the Eremenko-Lyubich class}


\author{N\'uria Fagella}
\address{Departament de Matem\`atiques i Inform\`atica, Institut de Matem\`atiques de la 
Universitat de Barcelona (IMUB) and Barcelona Graduate School of Mathematics (BGSMath). 
 Gran Via 585, 08007 Barcelona, Catalonia}
\email{nfagella@ub.edu}

\author{Xavier Jarque}
\address{Departament de Matem\`atiques i Inform\`atica, Institut de Matem\`atiques de la 
Universitat de Barcelona (IMUB), and Barcelona Graduate School of Mathematics (BGSMath).
 Gran Via 585, 08007 Barcelona, Catalonia}
\email{xavier.jarque@ub.edu}

\author{Kirill Lazebnik}
\address{California Institute of Technology. 1200 E California Blvd, Pasadena, CA 91125.}
\email{lazebnik@caltech.edu}

\thanks{ The first and second authors were partially supported by the Spanish 
grant MTM2017-86795-C3-3-P, the Maria de Maeztu Excellence Grant MDM-2014-0445, and grant 2017SGR1374 from the Generalitat de Catalunya. }
\subjclass[2010]{Primary 30D05, 37F10, 30D30.}





\begin{abstract}

We use the Folding Theorem of \cite{Bis15} to construct an entire function $f$ in class  $\mathcal{B}$ and a wandering domain $U$ of $f$ such that $f$ restricted to $f^n(U)$ is univalent, for all $n\geq 0$. The components of the wandering orbit are bounded and surrounded by the postcritical set.

\end{abstract}


\maketitle




\section{Introduction}


We consider the dynamical system formed by the iterates of an entire map $f:\C\to \C$. We will consider only \emph{transcendental} $f$, namely those maps $f$ with an essential singularity at $\infty$.  Such dynamical systems appear naturally as complexifications of one-dimensional real-analytic systems (interval maps or circle maps for instance), or as restrictions of analytic maps of $\R^{2n}$ to certain invariant one complex-dimensional manifolds. 


The dynamics of $f$ splits the complex plane into two complementary and  totally invariant sets: The {\em Fatou set} (or {\em stable set}), where the iterates form a normal family,  and its closed complement, the {\em Julia set}, $J(f)$, often a fractal  formed by chaotic orbits. 
The Fatou set is open and is generally composed of infinitely many connected components, known as {\em Fatou components}, which map among each other under the function $f$. 

It was already Fatou \cite{Fat20} who gave a complete classification of  periodic Fatou components in terms of the possible limit functions of the sequence of iterates. His classification theorem states that an invariant 
Fatou component is either an {\em immediate basin of attraction} of an attracting or parabolic fixed point; or a {\em Siegel disk}, i.e. a topological disk on which $f$ is conformally conjugate to a rigid irrational rotation;  or a {\em Baker domain} if the iterates converge uniformly to infinity. This classification extends to periodic Fatou components, since a component of period $p>1$ is invariant under $f^p$. 

It is well-known that each of the periodic cycles of Fatou components is in some sense associated to the orbit of a {\em singular value}, that is, a point around which not all branches of $f^{-1}$ are well defined. Singular values may be {\em critical values} (images of zeroes of $f'$) or also {\em asymptotic values} which, informally speaking,  are points that have at least one preimage ``at infinity'', such as $0$ for $z\mapsto e^z$.  

The set of singular values, $S(f)$,  plays a crucial role in holomorphic dynamics precisely because of its relation with the periodic Fatou components of $f$. We mention two examples. First, basins of attraction must  contain a singular value (and hence its forward orbit). Second, it is known that the {\em postsingular set}, $P(f)$,  i.e. the singular values of $f$ together with their forward orbits, must accumulate on the boundary of a Siegel disk \cite{Fat20,milnor}. The relation with Baker domains is weaker and not so easy to state, and we refer the reader to \cite{Berg95}. This connection allows one to glean information about possible stable orbit behaviours (namely periodic Fatou components) of a given map by understanding the dynamics of its set of singular values. For this reason, many classes of entire functions have been singled out in terms of properties of their singular set. Important examples are the {\em Speiser} class $\mathcal{S}$ of maps with a finite number of singular values (whose members are also called of {\em finite type}) or the {\em Eremenko-Lyubich} class
\[
\mathcal{B}=\{ f:\C\to \C  \text{\ entire} \mid S(f) \text{\ is bounded} \}.
\]
 
In the presence of an essential singularity at infinity there may exist Fatou components which are neither periodic nor eventually periodic. These are called {\em wandering domains} and are the subject of this paper. More precisely, a Fatou component $U$ is a wandering domain if $f^k(U) \cap f^j(U) =\emptyset$ for all $k,j\in \N$, $ k\neq j$.  

Perhaps because wandering domains do not exist for rational maps \cite{Sul85}, nor for maps in the Speiser class $\mathcal{S}$  \cite{EL92,GK86}, these rare Fatou components have not been subject of attention until quite recently, when maps with infinitely many singular values (like Newton's method applied to entire functions) have started to emerge as interesting objects. Nevertheless, many recent breakthrough results about wandering domains have appeared in the last several years. After the classical result \cite{EL92} which states that maps in class $\mathcal{B}$ cannot have wandering domains whose orbits converge to infinity uniformly (called {\em escaping wandering domains}), there was reasonable doubt of whether functions in class $\mathcal{B}$ could have wandering domains at all. This question was answered affirmatively by Bishop in \cite{Bis15} who constructed a function in class $\mathcal{B}$ with an {\em oscillating}  wandering domain, that is, a wandering domain whose orbits accumulate both at infinity and on a compact set (the collection of points that oscillate as such under $f$ is termed the \emph{Bungee set} $BU(f)$ - see \cite{OS16} for details and properties of this set). This construction depends on a technique, also developed in \cite{Bis15}, termed \emph{quasiconformal folding}. Very recently, Mart\'i-Pete and Shishikura \cite{2018arXiv180711820M} gave an alternative surgery construction of an $f\in\mathcal{B}$ with an oscillating wandering domain such that $f$ has finite order of growth. An oscillating wandering domain was constructed previously in \cite{EL87} using approximation theory techniques, however it is not known whether this example is in class $\mathcal{B}$. Indeed, these techniques give insufficient control over the singular set for this purpose. The question of existence of {\it dynamically bounded} wandering domains, i.e., wandering domains whose orbits do not accumulate at infinity, is unknown (this problem was first posed in \cite{EL87}). 

It is  a wide open problem to find the sharp relationship between wandering domains and the postsingular set $P(f)$, or even with  the singular set $S(f)$.  Results up to now show that some relation exists: if the domain is oscillating (i.e. lies inside $BU(f)$), any finite limit function must be a constant in $J(f)\cap \overline{P(f)}$ \cite{Bak02} (see also \cite{Haruta}) and, in any case, there must be postsingular points inside or nearby the wandering components (see \cite{BFJK17} and \cite{MR13} for the precise statements). 

A very related and natural  question is whether a wandering domain could exist such that the function were univalent on each of the orbit components.  Outside the class $\mathcal{B}$ the answer is, not surprisingly, affirmative, as shown in \cite{EL87} and \cite[Example 1]{FH09}. The example of \cite{EL87} is obtained using approximation theory, whereas the escaping wandering domain of \cite[Example 1]{FH09} is a logarithmic  lift of an appropriately  chosen invariant Siegel disk. But it is inside class $\mathcal{B}$ where this question makes most sense to be asked.  As we mention above, wandering domains in class $\mathcal{B}$ can not be escaping, and hence a large amount of contraction is necessary.  Let us keep in mind that Bishop's example contains a critical point of very high order inside infinitely many of its components, which allows for this large contraction. Our main result in this paper shows that, nevertheless,  univalent wandering domains are also possible inside this class of maps. 

\begin{thm}\label{mainthm}
There exists an entire transcendental function $f \in \mathcal{B}$  and a wandering Fatou component $U$ of $f$ such that  $f|_{f^n(U)}$ is univalent for all $n\geq 0$.
\end{thm}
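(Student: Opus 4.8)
\section{Proof proposal}

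The plan is to construct $f$ by the quasiconformal folding method of \cite{Bis15}, starting from an infinite tree $T\subset\C$ together with a conformal map on the complement of a neighborhood of $T$, and then arranging the combinatorial data so that the wandering orbit travels through a sequence of bounded ``boxes'' on which $f$ is a univalent contraction. Concretely, I would take $T$ to be an unbounded tree that outside a large disk looks like a single ray (so that away from the relevant dynamics the model map is the standard $\cosh$-type map with no critical points), while near the origin $T$ carries a carefully placed sequence of edges and vertices whose associated components (the preimages of $\{|w|>1\}$ under the folded map) contain the sets $f^n(U)$. The Folding Theorem produces an $f\in\mathcal B$ that agrees with the prescribed model off an $\varepsilon$-neighborhood of $T$ and whose only singular values lie in a bounded set consisting of $\pm 2$ plus finitely (or controlledly) many critical values coming from the ``extra'' edges; crucially, one must choose the tree so that \emph{none} of these critical points lie in the wandering orbit.

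The heart of the argument is the dynamical bookkeeping. First I would fix a sequence of round disks $D_n=D(a_n,r_n)$, with $U=D_0$, chosen so that the model map $g$ (before folding) sends each $D_n$ univalently and with strong contraction into $D_{n+1}$; since wandering domains in $\mathcal B$ cannot escape, the centers $a_n$ must be chosen to oscillate, returning infinitely often to a bounded region while also making long excursions — this is the same oscillating skeleton as in Bishop's construction, and I would reuse his mechanism of alternating ``fast'' steps (where the $\cosh$-type map expands enormously, used in reverse to get contraction) with ``slow'' return steps. The new point is that at each stage the relevant piece of $T$ must be a \emph{plain} edge, not a high-order vertex, so that the local degree of $f$ on $f^n(U)$ is exactly one. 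One then checks, using the precise $\varepsilon$-closeness in the Folding Theorem together with standard distortion estimates (Koebe), that the true map $f$ still maps $D_n$ univalently into $D_{n+1}$ with, say, $\|f'\|<1/2$ there, so that $\bigcup_n D_n$ lies in the Fatou set, the $D_n$ are pairwise disjoint (again by the oscillation), and $U$ is therefore a univalent wandering domain. Finally, to get the postcritical set to surround the components $f^n(U)$, I would add to $T$ a family of small loops encircling each $D_n$ whose images under folding are critical values close to $\partial D_n$; arranging their forward orbits to shadow the $D_n$-orbit makes $P(f)$ accumulate on each $\partial f^n(U)$ from outside.

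The main obstacle I anticipate is reconciling two competing demands on the tree near the wandering orbit: the Folding Theorem requires the tree to satisfy bounded-geometry and separation hypotheses (edges not too short relative to the local scale, a uniform lower bound on angles, etc.), yet to squeeze an infinite oscillating orbit into a bounded part of the plane the boxes $D_n$ must shrink and cluster, which forces the associated edges of $T$ to become short and to crowd together. Managing this requires rescaling: near each return to the bounded region one works in a rescaled coordinate where the geometry is again uniform, applies the Folding Theorem there, and patches. Keeping the singular set bounded through all these rescalings — and keeping it \emph{disjoint from the orbit} — is the delicate part, and is exactly where the flexibility of having only univalent (degree-one) behaviour along the orbit, rather than Bishop's high-degree critical points, makes the estimates tighter and is the crux of the proof. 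A secondary technical point is verifying that the constructed $U$ is genuinely a wandering \emph{Fatou} component (not, say, contained in a larger periodic component), which follows from the disjointness of the $D_n$ and the fact that $f\in\mathcal B$ has no escaping wandering domains, forcing the component to be exactly the oscillating one we built.
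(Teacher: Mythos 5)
You have correctly identified that Bishop's folding machinery is the right tool and that the goal is a wandering orbit that avoids the critical points, but the core mechanism you propose — replacing Bishop's high-degree $D$-vertices by plain edges so that ``the local degree of $f$ on $f^n(U)$ is exactly one'' — removes exactly the ingredient that makes the construction possible. In any $f\in\mathcal{B}$ a wandering domain cannot escape (Eremenko--Lyubich), so the orbit must be recurrently squeezed back into a bounded region; in Bishop's construction that squeezing is supplied precisely by the very high-degree power maps $z\mapsto z^{m_n}$ on the disks $D_n$, which map a unit disk onto a tiny image when $m_n$ is large and the image center is pushed near the boundary. The $\cosh$-type map on the half-strip $S^+$ is everywhere expanding, so if you strip out the $D$-components along the orbit there is no contraction at all and the orbit escapes, contradicting the requirement that $U$ be a wandering Fatou component of a class-$\mathcal{B}$ map. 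Your anticipated obstacle (bounded geometry versus shrinking boxes) is also misdirected: in the paper the disks $D_n$ never shrink or cluster — they stay unit disks centered at $z_n=a_n+i\pi$ — and it is only the sub-domains visited by the orbit that shrink, which poses no bounded-geometry issue for the tree $T$ and requires no rescaling.

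The paper's actual solution is sharper than ``use plain edges'': it keeps the high-degree $D$-component map but replaces $z^m$ by $\psi_{\delta,m}(z)=z^m+\delta z\,\eta(z)$, an interpolation equal to $z^m+\delta z$ near the center and to $z^m$ at the boundary. Near the center this is univalent, because for $m$ large and $|z|$ small the linear term $\delta z$ dominates, while the $m-1$ critical points are pushed out toward $\partial\mathbb D$. The wandering orbit is steered through the central univalent region, and an annular-separation estimate (inequalities (\ref{eq1})--(\ref{eq2}) in the paper) forces the critical values to land in the narrow gap between the inner disk $D_{p_{n}}^{--}$ and the outer disk $D_{p_{n}}^{++}$, just outside the wandering component. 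With $\delta_n$ small one keeps the huge contraction and gets degree one on the orbit simultaneously. Your suggestion of decorating $T$ with extra loops to generate critical values near $\partial D_n$ is both unnecessary — the critical values of $\psi_{\delta_n,m_n}$ already sit there — and unsupported, since the folding theorem does not let you freely place critical values along prescribed curves while controlling their forward orbits.
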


Our example uses the Folding Theorem in \cite{Bis15} (see Section 2) and is in fact a careful modification of Bishop's original construction. Very roughly speaking Bishop's function behaves like $(z-z_n)^{m_n}$ for some $m_n\to\infty$, on some subsequence of  wandering components. We replace these maps by $(z-z_n)^{m_n} + \delta_n\cdot(z-z_n)$ on subsets of the same components, which are univalent near the points $z_n$, and show that the critical values  can be kept outside (but very close to) the actual wandering components. This is achieved by trapping the boundary of the wandering components inside annuli of decreasing moduli which separate the domains from the critical values. 

The present construction yields rather poor control of the singular orbits in comparison to the initial construction of \cite{Bis15}. Indeed in Bishop's construction of a wandering domain in class $\mathcal{B}$, the singular orbits are well understood: a singular orbit either lies inside the wandering component or on the real line. Using this fact, it is proven in \cite{FGJ15} that there are no other wandering components in the construction of \cite{Bis15} apart from those explicitly constructed. In the construction of the present paper, however, the singular orbits are not well understood apart from the guarantee that the critical points can not lie in the forward orbit of the wandering component (see Section \ref{univalence}). Thus it is left as a possibility that the {\it grand} orbit of the wandering component may contain a critical point. Indeed we also leave open the question of whether there are {\it other} wandering components of the present example besides those explicitly constructed. Lastly we ask whether one can arrange for finer control over the singular orbits: is it possible to arrange for the singular set to lie entirely inside the Fatou or Julia sets? 

%

This paper is organized as follows. In Section 2 we recall Bishop's construction of a wandering domain in class $\mathcal{B}$, together with other preliminary results we use later on in the proof of Theorem \ref{mainthm}. Section 3 describes the new map on $D$-components that we shall work with in the construction. Section 4 describes a parametrized family of entire functions, from which we select in Section 5 a particular choice of parameters corresponding to an entire function possessing a wandering component. In Section \ref{univalence}, it is proven that $f$ acts univalently on the forward orbit of this wandering component, thus finishing the proof of Theorem \ref{mainthm}. 


\subsection*{Acknowledgements}
We are indebted to David Mart\'i-Pete and Mitsuhiro Shishikura for their careful reading of and numerous comments on a previous version of this paper. We are grateful to Chris Bishop for his comments on a preliminary version of the paper. We would also like to thank Mikhail Lyubich and Lasse Rempe-Gillen for helpful discussions. We finally thank the Universitat de Barcelona and the  Institut de Matem\`atiques de la UB for their hospitality during the visits that led to this work.


\section{Preliminaries}

The main result of this paper is based on Bishop's construction of transcendental entire maps via quasiconformal folding  \cite[Theorem 1.1]{Bis15}. Roughly speaking, given an infinite bipartite graph $T$, Bishop's Folding Theorem provides an entire function in class $\classB$ (bounded singular set) with  prescribed behaviour (up to pre-composition with a quasiconformal map close to the identity) off a small neighborhood of $T$. As opposed to what occurs with other existence theorems  (for example those in approximation theory - see for instance \cite{Gai}), one has fine control of the singular set of the final map, which makes this tool effective when constructing  examples in restricted classes of functions such as class $\mathcal{B}$.  

Our goal in this section is to provide the reader with the essential background to state (a simplified version of) Bishop's Theorem (Subsection 2.1), and its application to produce a transcendental entire function in class $\classB$ having an oscillating wandering domain (Subsection 2.2). For a deeper discussion we refer to the source \cite{Bis15} or to \cite{FGJ15, Laz} where some details are  more explicit. Additionally, at the end of this section we recall  some additional tools that will be used throughout the paper.

\subsection{On Bishop's quasiconformal folding construction}
Let $T$ be an unbounded connected bipartite graph with vertex labels in $\{-1,+1\}$. Then the connected components of $\C\setminus T$ are simply connected domains in $\C$. We denote by $R$-{\it components} (respectively  $D$-{\it components}) the unbounded (respectively bounded) components of $\C\setminus T$. We will assume that $T$ has \emph{uniformly bounded geometry}, i.e. that edges are (uniformly) $\mathcal{C}^{2}$ and that the diameters of edges satisfy certain uniform bounds (see Theorem 1.1 of \cite{Bis15}, the note \cite{Bishop_correction_note}, and Section 2 of \cite{2018arXiv180704581B}).
We define a neighbourhood of the graph given by
$$
T(r):=\bigcup_{e\text{ edge of }T}\Big\{z\in\C\ |\ \dist(z,e)<r\diam(e)\Big\},
$$
where $\dist$ and $\diam$ denote the Euclidean distance and diameter respectively. 

We denote by $\Hr=\{z=x+iy\in\C\,|\,x>0\}$ the right half plane and by $\D=\{z\in\C\,|\,|z|<1\}$ the unit disk. For each connected component $\Omega_{j}$ of $\C\setminus T$, let $\tau_{j}:\Omega_{j}\to \Delta_j$ be the Riemann map where $\Delta_j=\Hr$ or $\Delta_j=\D$ depending on whether $\Omega_{j}$ is an unbounded or bounded component. We call $\Delta=\Delta_j$  the {\em standard domain} for $\Omega_{j}$. We shall denote by $\tau$ the global map defined on $\cup_j \Omega_j$ such that $\tau|_{\Omega_j} = \tau_j$. Each edge $e$ of $T$ is the common boundary of at most two complementary domains but corresponds via $\tau$ to exactly two intervals on $\partial \Hr$, or one interval on $\partial \Hr$ and one arc in $\partial \D$ (see condition (i) in Theorem \ref{folding}). The \textit{$\tau$-size} of an edge $e$ is defined to be the minimum among the lengths of the two images of $e$ under $\tau$.   

Moreover we also define a map $\sigma$ from the standard domains $\Delta_j$ to $\mathbb C$ depending on whether  $\Delta_j$ equals $\mathbb H_r$ or $\mathbb D$. More precisely, we define $\sigma(z):=\exp(z)$ if 
$\Delta_j=\mathbb H_r$. Otherwise, if  $\Delta_j=\mathbb D$, then $\sigma(z):=z^m,\ m\geq 2$ possibly followed by a quasiconformal map $\rho: \mathbb D \mapsto \mathbb D$ which sends $0$ to some $w\in \interior (\mathbb D)$ and is the identity on $\partial \mathbb D$.  

Now we are ready to state Theorem 7.2 of \cite{Bis15}, in a simplified version which is sufficient for our purposes.

\begin{thm}\label{folding} Let $T$ be an unbounded connected graph and let $\tau$ be a conformal map defined on each complementary domain $\mathbb{C}\setminus T$ as above. Assume that:

\begin{itemize}
\item[(i)] No two D-components of $\mathbb{C}\setminus T$ share a common edge.
\item[(ii)] T is bipartite with uniformly bounded geometry.
\item[(iii)] The map $\tau$ on a D-component with $2n$ edges maps the vertices to the $2n^{\text{th}}$ roots of unity.
\item[(iv)] On R-components the $\tau$-sizes of all edges are uniformly bounded from below.
\end{itemize}
\vspace{2mm}

Then  there is an $r_0>0$, a transcendental entire $f$, and a $K$-quasiconformal map $\phi$ of the plane, with $K$ depending only on the uniformly bounded geometry constants, so that $f=\sigma\circ\tau\circ\phi^{-1}$ off $T(r_0)$. Moreover $f$, has no asymptotic values, and the only critical values of $f$ are $\pm1$ and those critical values assigned by the D-components.
\end{thm}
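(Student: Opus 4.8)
I would prove this by \emph{quasiconformal folding}, following \cite{Bis15}; the statement is a packaged form of Theorem 7.2 there. The guiding idea is to build a quasiregular model $g\colon\C\to\C$ that already equals $\sigma\circ\tau$ away from a thin neighbourhood of $T$, and then correct it to a genuinely holomorphic map via the Measurable Riemann Mapping Theorem. Concretely, one sets $g=\sigma\circ\tau$ on $\C\setminus T$: on an $R$-component $g=\exp\circ\tau_j$ is a conformal (infinite-degree) covering onto $\{|z|>1\}$, while on a $D$-component with $2m$ edges $g=\tau_j^{\,m}$ (possibly post-composed with the quasiconformal self-map $\rho$ of $\D$) maps onto $\D$ with degree $m$. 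By (iii) the $2m$ vertices of such a $D$-component are sent by $\tau_j$ to the $2m$-th roots of unity, hence by $\tau_j^{\,m}$ alternately to $+1$ and $-1$, matching the bipartition of $T$; similarly, after subdividing edges (see below) the vertices on $R$-components may be taken to lie over $\pm1$. The obstruction is that this model is \emph{not continuous across $T$}: an edge $e$ has two $\tau$-images, and after applying $\sigma$ these become two arcs of $\partial\D$ of generally different lengths (their minimum being the $\tau$-size of $e$), so $g$ must be redefined as a quasiregular map on a neighbourhood $T(r_0)$ of $T$ in order to glue the two sides together.

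I would carry out this redefinition edge by edge. Using hypothesis (iv) together with the uniformly bounded geometry (ii), one first adds vertices to $T$ (keeping it bipartite with uniform geometry constants) so that every resulting sub-edge lying on an $R$-component has $\tau$-size comparable to $\pi$; by (iii) the sub-edges on $D$-components already correspond to equal arcs of $\partial\D$, which the relevant power map sends onto half-circles. In the coordinates furnished by $\sigma\circ\tau$, a neighbourhood of a sub-edge then looks like two ``strips'' abutting along $e$ and mapping to regions lying on the two sides of $\partial\D$, whose boundary parametrisations disagree only by a controlled amount. I would reconcile them by inserting Bishop's \emph{folding map}: a quasiconformal homeomorphism of a half-disk (equivalently, a slit rectangle) onto a rectangle which ``folds'' the boundary, interpolating between the two mismatched parametrisations. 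I expect this to be the main obstacle: the folding map must simultaneously (a) restrict to the identity near the outer boundary, so it patches continuously onto $g=\sigma\circ\tau$ outside $T(r_0)$; (b) have dilatation bounded solely in terms of the bounded-geometry constants; and (c) be arranged so that the only critical values it introduces are $\pm1$. Point (c) is possible precisely because, in the $\sigma$-coordinates, the endpoints of all the arcs being matched are $\tau$-images of vertices of $T$ and hence sit over $\pm1$, forcing the fold points to map to $\pm1$.

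Granting such a $g$, its Beltrami coefficient $\mu_g$ is supported inside $T(r_0)$ and satisfies $\|\mu_g\|_\infty\le k<1$ with $k$, hence $K=\tfrac{1+k}{1-k}$, depending only on the bounded-geometry constants. The Measurable Riemann Mapping Theorem produces a $K$-quasiconformal homeomorphism $\phi\colon\C\to\C$ with $\mu_\phi=\mu_g$, and then $f:=g\circ\phi^{-1}$ is quasiregular with trivial Beltrami coefficient, so by Stoilow's theorem it is holomorphic; since $g$ has no poles, $f$ is entire, and $f=g\circ\phi^{-1}=\sigma\circ\tau\circ\phi^{-1}$ outside $\phi(T(r_0))$ — choosing the support of $\mu_g$ inside a sufficiently thin subneighbourhood one arranges $\phi(T(r_0))\subseteq T(r_0)$, giving the stated form. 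Finally, because $g$ restricted to each $R$-component is $\exp\circ\tau_j$ (and by (i) an $R$-component exists), $f$ assumes every value of sufficiently large modulus infinitely often, hence is transcendental; moreover $f$ and $g$ differ only by pre-composition with the homeomorphism $\phi^{-1}$ of $\C$, so they have the same asymptotic and critical values. Now $g$ has no asymptotic values (outside $T(r_0)$ it behaves like $\exp$ on a half-plane, which has none, the $D$-components are bounded, and the thin set $T(r_0)$ contributes none), and its critical points are those of the folding maps, with critical values $\pm1$, together with the centres of the $D$-components, with critical values the prescribed points $\rho(0)$. Therefore $f$ has no asymptotic values and $S(f)=\{\pm1\}\cup\{\text{$D$-component critical values}\}$, as claimed.
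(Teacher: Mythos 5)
Your outline correctly reproduces the quasiconformal-folding argument of \cite{Bis15}, which is precisely what the paper relies on here: Theorem \ref{folding} is a simplified restatement of Bishop's Theorem 7.2, cited rather than reproved, with only a brief sketch of the strip subdivision and the M\"obius adjustment of $\sigma$ on $\partial\Hr$. The ingredients you isolate — the quasiregular model $g=\sigma\circ\tau$ off $T(r_0)$, subdivision of $R$-side edges to $\tau$-size comparable to $\pi$, Bishop's folding interpolation with dilatation controlled only by the bounded-geometry constants, straightening via the Measurable Riemann Mapping Theorem, and transporting critical/asymptotic values through $f=g\circ\phi^{-1}$ — are exactly the steps of that proof.
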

 
 As mentioned, the proof of the above Theorem is based on some quasi-conformal deformations of the  
maps $\tau$ and $\sigma$ inside $T_{r_0}$ so that the modified global map $g=\sigma \circ \tau$ is quasiregular. In this situation the Measurable Riemann Mapping Theorem (see Theorem \ref{MRT} below) provides the quasiconformal map $\phi$ in the statement of Theorem \ref{folding}. We sketch here some details needed for our construction but we refer again to \cite{Bis15}, \cite{Bishop_correction_note} and Sections 2, 3 of \cite{2018arXiv180704581B} for a more detailed approach.

If $\Delta=\Hr$ we first divide $\partial\Hr$ into intervals $I$ of length $2\pi$ and vertices in $2\pi i\Z$. These intervals will correspond, after Bishop's folding construction, to the images of the edges of $T'\supset T$ (where $T'$ is $T$ with some \textit{decorations} added, all of which are contained  in the neighborhood $T(r_{0})$ of $T$) by a suitable quasiconformal deformation $\eta$ of $\tau$. Secondly we define $\sigma$ on $\partial\Hr$. There are two cases to consider: either $I$ is identified with a common arc of two $R$-components or $I$ is identified with a common arc of one $R$-component and one $D$-component. In the latter case, we define $\sigma(iy):=\exp(iy)$ for every $iy\in I$. In the former case, we define $\sigma(iy):=M\circ\exp(iy)$ or $\sigma(iy):=M^{-1}\circ\exp(iy)$, for every $iy\in I$, where $M(z)=i(z-i)/(z+i)$ is a M\"obius transformation sending the upper half of the unit circle to $[-1,1]$ (and $M^{-1}$ sends the lower half of the unit circle to $[-1,1]$), and we use $M$ or $M^{-1}$ depending on whether $\exp(iy)$ maps to the upper or lower half of the unit-circle for $iy\in I$. Lastly, we extend $\sigma$ to $\mathbb{H}_r$ as a quasiregular map such that $\sigma(z)=\exp(z)$ for $\textrm{Re}(z)>2\pi$ (see the note \cite{Bishop_correction_note} or Section 3 of \cite{2018arXiv180704581B}).


\subsection{The prototype map} \label{subsection:prototype} In \cite[Section 17]{Bis15}, the author gives an application of Theorem \ref{folding} in order to construct a family of entire functions in class $\classB$ depending on infinitely many parameters. One defines an unbounded connected graph $T$ and, on some relevant  complementary domains, defines the maps $\sigma\circ\tau$ depending on the parameters. By choosing the parameters appropriately, it is ensured that the resulting function has oscillating wandering domains. Since Theorem \ref{mainthm} is also an application of Theorem \ref{folding} to the same family of graphs as in \cite[Section 17]{Bis15}, we briefly describe the construction. Again we refer to \cite{Bis15} or \cite{FGJ15,Laz}  for a detailed discussion.  

Consider the open half strip
$$
S^{+}:=\left\{x+iy\in\C\ |\ x>0\ \text{and}\ |y|<\frac{\pi}{2}\right\}.
$$
Following the previous notation we denote by $(\sigma\circ\tau)|_{S^{+}}$ the composition $z\mapsto\sigma(\lambda\sinh(z))$, where we remark that $S^+$ will neighbor only R-components so that $\sigma$ is determined as in the last paragraph of Section 2.1. We remark that this map extends continuously  to the boundary, sending $\partial S^{+}$ onto the real segment $[-1,+1]$. On the upper horizontal boundary of $ S^{+}$, we select points $(a_{n}\pm i\pi/2)_{n\geqslant 1}$ which are sent to $\{-1,+1\}$ by $(\sigma\circ\tau)|_{S^{+}}$, such that $a_{n}$ is close to $n\pi$ for every $n\geqslant 1$ (see \cite{FGJ15} for more details).

The following open disks will belong to the graph $T$  (see Figure \ref{fig:domains}):
$$
\forall n\geqslant 1,\quad D_{n}:=\{z\in\C\ |\ |z-z_{n}|<1\}\quad\text{where}\quad z_{n}:=a_n+i\pi.
$$
We complete the construction of $T$ by adding segments connecting the points $a_n+i\pi/2$ and $z_n-i$, adding vertical segments connecting the points $z_n+i$ with infinity, and lastly, copying the structure through the symmetries $z \to \pm \overline{z}$.

Again, following the above notation, we will denote by $(\sigma\circ\tau)|_{D_{n}}$ the composition $z\mapsto\rho_{n}(\left(z-z_{n})^{m_{n}}\right)$ for every $n\geqslant 1$ where, for every $n$,  $m_n\in2\mathbb{N}$ and $\rho_n$ is a quasiconformal map sending $0$ to $w_n\in D(0,3/4)$ (see Lemma \ref{rho}) and such that $\rho_n|_{\partial \mathbb D}={\rm Id}$.  Figure \ref{fig:domains} summarizes the construction.

For suitable choices of the parameters $\{w_n,m_n,\lambda\}$, Theorem \ref{folding} gives an example of a transcendental map in class $\classB$ with an oscillating (non-univalent) wandering domain (\cite{Bis15}, \cite{Bishop_correction_note}). The contribution of the present work is to show that by considering a more general class of quasiregular maps on $D$-components (see Section 3), one is able to construct a function in class $\mathcal{B}$ which is then proven to have a \emph{univalent} wandering domain.


\begin{figure}[!htbp]
\centering
\setlength{\unitlength}{0.9\textwidth}
\includegraphics[width=0.9\textwidth]{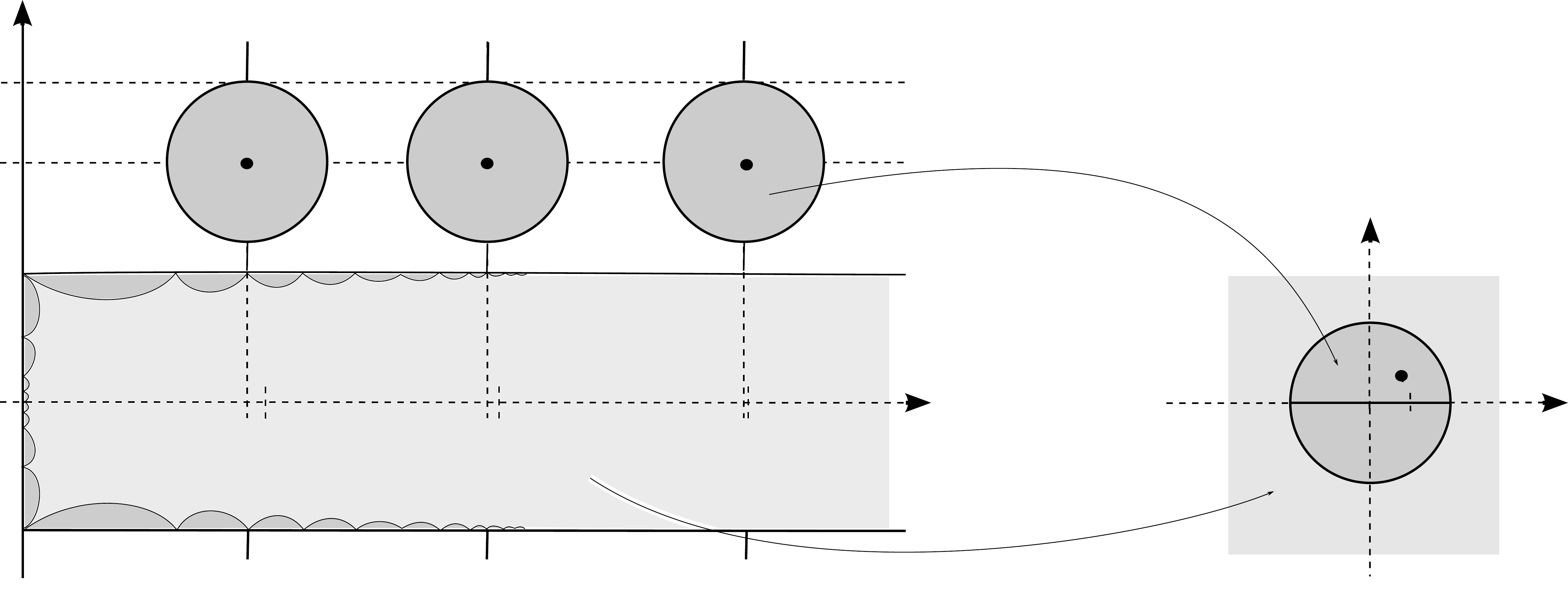}
    \put(-1.07,0.32522965){\scs$i(\pi+1)$}%
    \put(-1.02644847,0.27460868){\scs$i\pi$}%
    \put(-1.02897605,0.20816413){\scs$i\frac{\pi}{2}$}%
    \put(-1.01066558,0.1200105){\scs$0$}%
    \put(-0.91889615,0.14677866){$S^+$}%
    \put(-0.87076763,0.30021838){$D_1$}%
    \put(-0.84896265,0.25530134){\scs$z_1$}%
    \put(-0.71584805,0.30021838){$D_2$}%
    \put(-0.70398949,0.25530134){\scs$z_2$}%
    \put(-0.54010956,0.30021838){$D_3$}%
    \put(-0.52892506,0.25530134){\scs$z_3$}%
    \put(-0.85896265,0.1012687){\scs$a_1$}%
    \put(-0.8374386,0.1012687){\scs$\pi$}%
    \put(-0.71,0.1012687){\scs$a_2$}%
    \put(-0.69,0.1012687){\scs$2\pi$}%
    \put(-0.55,0.1012687){\scs$a_3$}%
    \put(-0.40868932,0.05010627){\scs $\sigma(\lambda\sinh(z))$}%
    \put(-0.4,0.2887626){\scs $\rho_n\left((z-z_n)^{m_n}\right)$}%
    \put(-0.21,0.1012687){\scs$-1$}%
    \put(-0.07,0.1012687){\scs$1$}%
    \put(-0.14,0.1012687){\scs$0$}%
    \put(-0.11,0.1012687){$\frac12$}%
    \put(-0.11,0.1512687){\scs$w_n$}%
    \caption{\small The domains $S^{+}$ and $(D_{n})_{n\geqslant 1}$ are depicted on the left. The dark gray areas represent the preimages of the unit disk $\D$ under the map $\sigma\circ\tau$.}\label{fig:domains}
\end{figure}

\subsection{Other tools} The following statement is  Koebe's one-quarter Theorem and a part of his distortion theorem.
\begin{thm}[{\cite[Section 1.3]{PommerenkeBook}}]\label{thm:Koebe}
Let $F$ be a univalent function on the disk $D(a,r)$ for some $a\in\C$ and $r>0$.  Then
\begin{itemize}
\item[(a)] $F(D(a,r)) \supset D\left( F(a),\frac14 |F'(a)| r\right)$. 

\item[(b)] For all $z\in D(a,r)$.

\[
 \dfrac{r^2|z-a||F'(a)|}{(r+|z-a|)^2}\leqslant |F(z)-F(a)| \leqslant\dfrac{r^2|z-a||F'(a)|}{(r-|z-a|)^2}.
 \]
 \item[(c)]  For all $z\in D(a,r)$,

\[
 \dfrac{1-\left| \frac{z-a}{r} \right|}{(1+|\frac{z-a}{r}|)^3}\leqslant \left| \frac{F'(z)}{F'(a)} \right| \leqslant\dfrac{1+\left| \frac{z-a}{r} \right|}{(1-|\frac{z-a}{r}|)^3}.
 \]

\end{itemize}
\end{thm}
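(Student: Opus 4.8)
This statement is Koebe's classical one‑quarter theorem together with part of his distortion theorem, so the honest route is simply to invoke \cite{PommerenkeBook}; but if I were to prove it directly, I would proceed as follows. First I would reduce to the normalized setting: all three assertions are invariant under affine changes of variable, so replacing $F$ by $f(z):=(F(a+rz)-F(a))/(r\,F'(a))$ for $z\in\D$, I may assume $f$ is univalent on $\D$ with $f(0)=0$ and $f'(0)=1$, and write $f(z)=z+a_2z^2+a_3z^3+\cdots$. In this normalization (a), (b), (c) become, respectively, $f(\D)\supset D(0,\tfrac14)$, the growth estimate $\tfrac{r}{(1+r)^2}\le|f(z)|\le\tfrac{r}{(1-r)^2}$ for $|z|=r<1$, and the distortion estimate $\tfrac{1-r}{(1+r)^3}\le|f'(z)|\le\tfrac{1+r}{(1-r)^3}$; each of these unwinds back to the inequality claimed on $D(a,r)$ by undoing the substitution.

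The engine is Gronwall's area theorem: for a univalent $g(\zeta)=\zeta+b_0+b_1\zeta^{-1}+b_2\zeta^{-2}+\cdots$ on $\{|\zeta|>1\}$ one has $\sum_{n\ge1}n|b_n|^2\le1$, which I would prove by expressing the area of the set omitted by $g$ on $\{|\zeta|>R\}$ via Green's theorem and letting $R\downarrow1$. Applied to the odd square‑root transform $h(\zeta):=f(\zeta^{-2})^{-1/2}=\zeta-\tfrac12a_2\zeta^{-1}+\cdots$, this yields Bieberbach's bound $|a_2|\le2$. Part (a) then follows at once: if $w\notin f(\D)$, the map $wf/(w-f)$ again lies in the normalized class and has second Taylor coefficient $a_2+w^{-1}$, so $|a_2+w^{-1}|\le2$, whence $|w^{-1}|\le|a_2|+2\le4$, i.e. $|w|\ge\tfrac14$.

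For (b) and (c) I would apply the bound $|a_2|\le2$ to the renormalized pre‑composition with a disk automorphism, $z\mapsto\bigl(f(\tfrac{z+\zeta}{1+\bar\zeta z})-f(\zeta)\bigr)/((1-|\zeta|^2)f'(\zeta))$ for $\zeta\in\D$, which is again normalized univalent; computing its second coefficient gives the sharp inequality $\bigl|(1-|\zeta|^2)\,\zeta f''(\zeta)/f'(\zeta)-2|\zeta|^2\bigr|\le4|\zeta|$. Restricting $\zeta=\rho e^{i\theta}$ to a radius and taking real parts converts this into the two‑sided differential inequality $\tfrac{2\rho-4}{1-\rho^2}\le\partial_\rho\log|f'(\rho e^{i\theta})|\le\tfrac{2\rho+4}{1-\rho^2}$; integrating from $0$ to $r$ (using $\log|f'(0)|=0$) gives exactly (c). Integrating $|f'|$ once more along the radius gives the upper bound in (b), and for the lower bound I would use part (a): if $|f(z)|<\tfrac14$ then $[0,f(z)]\subset f(\D)$, so pulling this segment back to a path $\gamma$ from $0$ to $z$ and estimating $|f(z)|=\int_\gamma|f'|\,|d\zeta|\ge\int_0^r\tfrac{1-t}{(1+t)^3}\,dt=\tfrac{r}{(1+r)^2}$ (and otherwise $|f(z)|\ge\tfrac14\ge\tfrac{r}{(1+r)^2}$) completes the proof.

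The only genuinely non‑formal step, and hence the main obstacle, is checking that the square‑root transform $h$ admits a single‑valued univalent branch on $\{|\zeta|>1\}$ — this rests on the fact that $f$ vanishes only at the origin. A secondary technical point is the lower growth bound in (b): there the triangle inequality points the wrong way, so one cannot simply integrate the pointwise lower bound on $|f'|$ and must instead route through the one‑quarter theorem and a path integral, using that $d|\zeta|/ds\le|\zeta'(s)|$ along $\gamma$. Everything else is routine bookkeeping of constants through the two integrations.
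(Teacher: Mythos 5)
This theorem is stated in the paper without proof, being cited directly from \cite[Section 1.3]{PommerenkeBook} as a classical result, so there is no in-paper argument to compare against. Your sketch is the standard textbook route: Gronwall's area theorem, then Bieberbach's $|a_2|\le 2$ via the square-root transform $h(\zeta)=f(\zeta^{-2})^{-1/2}$, then the one-quarter theorem (a) by applying that bound to $wf/(w-f)$, then the invariant form of Bieberbach's inequality via pre-composition with disk automorphisms to get the two-sided logarithmic-derivative estimate and hence (c), then radial integration of $|f'|$ for the upper bound in (b), and finally the preimage of the segment $[0,f(z)]$ combined with (a) for the lower bound in (b). All the coefficient computations you indicate check out, and you have correctly isolated the two genuinely non-mechanical points: that $h$ admits a single-valued odd univalent branch on $\{|\zeta|>1\}$ because $f$ vanishes only at the origin, and that the lower growth bound cannot be obtained by naively integrating the pointwise lower bound on $|f'|$ along the radius and instead requires the pullback path $\gamma=f^{-1}([0,f(z)])$ together with $d|\zeta|/ds\le 1$ and positivity of $(1-t)/(1+t)^3$ on $[0,1)$. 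The proof is correct.
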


Theorem \ref{MRT} below is termed the Measurable Riemann Mapping Theorem. For a proof, history, and references we refer to Chapter 4 of \cite{MR2245223}. It is used to produce the quasiconformal mapping $\phi$ of Theorem \ref{folding}. Theorem \ref{Lehto-Virtanen} below will be used in normal family arguments to deduce the dilatation of a limit of a sequence of quasiconformal mappings. An exposition of Theorem \ref{Lehto-Virtanen} is given in Section IV.5.6 of \cite{MR0344463}. 

\begin{thm}{\emph{}}
\label{MRT}
If $\mu\in L^{\infty}(\mathbb{C})$ with $||\mu||_\infty<1$, there exists a quasiconformal mapping $\phi:\mathbb{C}\rightarrow\mathbb{C}$ so that $\phi_{\overline{z}}/\phi_z=\mu$ a.e.. Moreover, given any other quasiconformal $\Phi:\mathbb{C}\rightarrow\mathbb{C}$ with $\phi_{\overline{z}}/\phi_z=\Phi_{\overline{z}}/\Phi_z$ a.e., there exists a conformal $\psi: \mathbb{C}\rightarrow\mathbb{C}$ so that $\Phi= \psi\circ\phi$.  

\end{thm}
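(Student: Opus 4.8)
The plan is to follow the classical route (see \cite{MR2245223}): solve the Beltrami equation first for compactly supported $\mu$ by a Neumann series built from the Cauchy and Beurling transforms, then bootstrap to an arbitrary $\mu\in L^{\infty}(\C)$ by exhaustion and a normal-family argument, and finally deduce uniqueness from Weyl's lemma. For the first step, suppose $\mu$ is supported in a disk $D(0,R)$. I would introduce the Cauchy transform $P$ and the Beurling transform $S$,
\[
(Ph)(z)=-\frac{1}{\pi}\int_{\C}\frac{h(w)}{w-z}\,dA(w),\qquad (Sh)(z)=-\frac{1}{\pi}\,\mathrm{p.v.}\!\int_{\C}\frac{h(w)}{(w-z)^{2}}\,dA(w),
\]
which satisfy $\partial_{\overline z}(Ph)=h$ and $\partial_{z}(Ph)=Sh$ in the sense of distributions, and search for $\phi$ of the form $\phi(z)=z+(Ph)(z)$ with $h\in L^{p}(\C)$ supported in $\overline{D(0,R)}$ for some $p>2$. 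Then $\phi_{\overline z}=h$ and $\phi_{z}=1+Sh$, so the Beltrami equation $\phi_{\overline z}=\mu\,\phi_{z}$ is equivalent to the fixed-point equation $(I-\mu S)h=\mu$.

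The key analytic input is the $L^{p}$ theory of the Beurling transform: $S$ is unitary on $L^{2}(\C)$ and, by Calderón--Zygmund theory, bounded on every $L^{p}(\C)$ with $\|S\|_{p\to p}\to 1$ as $p\to 2$. Since $\|\mu\|_{\infty}<1$, I would fix $p>2$ so close to $2$ that $\|\mu\|_{\infty}\,\|S\|_{p\to p}<1$; then $I-\mu S$ is invertible on $L^{p}(\C)$ and $h=\sum_{k\geq0}(\mu S)^{k}\mu\in L^{p}(\C)$, supported in $\overline{D(0,R)}$. Setting $\phi(z)=z+(Ph)(z)$ then produces a map in $W^{1,p}_{\mathrm{loc}}(\C)$ solving the Beltrami equation a.e.\ and satisfying $\phi(z)=z+O(1/|z|)$ near $\infty$. \emph{This is the step I expect to be the main obstacle}: establishing the sharp $L^{p}$-mapping properties of $S$ and then upgrading the $W^{1,p}$ solution to an honest homeomorphism.

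To complete the compactly supported case I would show $\phi$ is an orientation-preserving homeomorphism of $\C$: Sobolev embedding ($p>2$) gives continuity; mollifying $\mu$ to smooth compactly supported $\mu_{n}\to\mu$ with $\|\mu_{n}\|_{\infty}\leq\|\mu\|_{\infty}$ yields diffeomorphic solutions $\phi_{n}$ with uniform quasiconformality bounds, and passing to the locally uniform limit gives injectivity and openness, while the behaviour $z+O(1/|z|)$ at $\infty$ rules out degeneration and gives surjectivity. Since $|\phi_{\overline z}/\phi_{z}|=|\mu|\leq\|\mu\|_{\infty}<1$ a.e.\ and $\phi\in W^{1,2}_{\mathrm{loc}}$, $\phi$ is $K$-quasiconformal with $K=(1+\|\mu\|_{\infty})/(1-\|\mu\|_{\infty})$. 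For general $\mu\in L^{\infty}(\C)$ with $\|\mu\|_{\infty}<1$, I would exhaust $\C$ by the disks $D(0,n)$, solve the compactly supported problem for $\mu_{n}:=\mu\cdot\mathbf 1_{D(0,n)}$ with $\phi_{n}$ normalized to fix $0$ and $1$, and extract a locally uniformly convergent subsequence by normality of $K$-quasiconformal maps; Theorem~\ref{Lehto-Virtanen} then identifies the Beltrami coefficient of the limit $\phi$ as $\mu$ a.e.

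Finally, for uniqueness, given another quasiconformal $\Phi\colon\C\to\C$ with $\Phi_{\overline z}/\Phi_{z}=\mu$ a.e., I would set $\psi:=\Phi\circ\phi^{-1}$. By the chain rule for Sobolev homeomorphisms the Beltrami coefficients cancel, so $\psi$ is a homeomorphism of $\C$ lying in $W^{1,2}_{\mathrm{loc}}$ with $\psi_{\overline z}=0$ a.e.; by Weyl's lemma $\psi$ is holomorphic, hence a conformal automorphism of $\C$, and $\Phi=\psi\circ\phi$, as claimed. The hard part is entirely in the existence half; the uniqueness half is comparatively soft once Weyl's lemma is available.
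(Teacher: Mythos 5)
The paper does not prove Theorem~\ref{MRT}: it quotes the Measurable Riemann Mapping Theorem and refers the reader to Chapter~4 of \cite{MR2245223} for a proof, so there is no internal argument to compare against. Your sketch is the standard Ahlfors--Bers route (Cauchy and Beurling transforms, $L^{p}$ Neumann series using $\|S\|_{p\to p}\to 1$ as $p\to 2$, mollification plus normal families to get a homeomorphism, exhaustion for non-compactly-supported $\mu$, and Weyl's lemma with the Sobolev chain rule for uniqueness), which is essentially the proof given in that reference; the steps you flag as the main obstacles (the $L^{p}$-estimate for $S$ near $p=2$ and upgrading the $W^{1,p}$ solution to a homeomorphism) are indeed the substantive technical points.
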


\begin{thm}{\emph{(\cite{MR0083025})}} \label{Lehto-Virtanen} Let $\phi_n:\mathbb{C}\rightarrow\mathbb{C}$ be a sequence of $K$-quasiconformal mappings converging to a quasiconformal mapping $\phi: \mathbb{C}\rightarrow\mathbb{C}$ with complex dilatation $\mu$ uniformly on compact subsets of $\mathbb{C}$. If the complex dilatations $\mu_n(z)$ of $\phi_n$ tend to a limit $\mu_\infty(z)$ almost everywhere, then $\mu_\infty(z)=\mu(z)$ almost everywhere.  

\end{thm}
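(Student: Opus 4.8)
The plan is to reduce the statement to passing to the limit in the Beltrami equation $\overline{\partial}\phi_n = \mu_n\,\partial\phi_n$, where $\partial=\partial_z$ and $\overline{\partial}=\partial_{\overline z}$ are the Wirtinger derivatives. The two ingredients are weak $L^2_{\mathrm{loc}}$ compactness of the derivatives of a $K$-quasiconformal family, and the fact that all the dilatations are uniformly bounded away from the unit circle, $\|\mu_n\|_\infty\le k:=(K-1)/(K+1)<1$ (so that also $|\mu_\infty|\le k$ a.e.).

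First I would recall that each $\phi_n$ lies in $W^{1,2}_{\mathrm{loc}}(\C)$ and that, for any bounded open $\Omega\subset\C$,
\[
\int_\Omega |\partial\phi_n|^2 \;\le\; \frac{1}{1-k^2}\int_\Omega J_{\phi_n} \;=\; \frac{1}{1-k^2}\,\area\!\big(\phi_n(\Omega)\big),
\]
using $J_{\phi_n}=|\partial\phi_n|^2-|\overline{\partial}\phi_n|^2=(1-|\mu_n|^2)\,|\partial\phi_n|^2$. Since $\phi_n\to\phi$ uniformly on compact sets, the areas $\area(\phi_n(\Omega))$ are uniformly bounded, so $\{\partial\phi_n\}$ and $\{\overline{\partial}\phi_n\}$ are bounded in $L^2(\Omega)$ for every such $\Omega$. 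On the other hand $\phi_n\to\phi$ in $L^1_{\mathrm{loc}}(\C)$, hence $\partial\phi_n\to\partial\phi$ and $\overline{\partial}\phi_n\to\overline{\partial}\phi$ in $\mathcal D'(\C)$; combined with the $L^2_{\mathrm{loc}}$ bound, a routine subsequence argument (weak limits in $L^2_{\mathrm{loc}}$ are uniquely determined by the distributional limit) upgrades this to weak convergence $\partial\phi_n\rightharpoonup\partial\phi$ and $\overline{\partial}\phi_n\rightharpoonup\overline{\partial}\phi$ in $L^2_{\mathrm{loc}}(\C)$.

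The heart of the argument is the limit of the nonlinear term $\mu_n\,\partial\phi_n$. Fix $\varphi\in C_c^\infty(\C)$ and write
\[
\int_\C (\mu_n\partial\phi_n - \mu_\infty\partial\phi)\,\varphi
 = \int_\C (\mu_n-\mu_\infty)\,\partial\phi_n\,\varphi
 + \int_\C \mu_\infty\,(\partial\phi_n-\partial\phi)\,\varphi .
\]
For the first term, Hölder's inequality with exponents $2,2,\infty$ gives the bound $\|\mu_n-\mu_\infty\|_{L^2(\mathrm{supp}\,\varphi)}\,\|\partial\phi_n\|_{L^2(\mathrm{supp}\,\varphi)}\,\|\varphi\|_\infty$; the middle factor is uniformly bounded by the previous step, while $|\mu_n-\mu_\infty|\le 2k$ and $\mu_n\to\mu_\infty$ a.e., so $\|\mu_n-\mu_\infty\|_{L^2(\mathrm{supp}\,\varphi)}\to 0$ by dominated convergence. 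The second term tends to $0$ because $\mu_\infty\varphi\in L^2(\C)$ and $\partial\phi_n\rightharpoonup\partial\phi$ weakly in $L^2_{\mathrm{loc}}$. Hence $\mu_n\partial\phi_n\rightharpoonup\mu_\infty\partial\phi$ in $\mathcal D'(\C)$, and since also $\overline{\partial}\phi_n\rightharpoonup\overline{\partial}\phi$, the equations $\overline{\partial}\phi_n=\mu_n\partial\phi_n$ pass to the limit to give $\overline{\partial}\phi=\mu_\infty\,\partial\phi$ a.e. Finally, $\phi$ being quasiconformal, its Jacobian $J_\phi=(1-|\mu|^2)|\partial\phi|^2$ is positive a.e., so $\partial\phi\neq 0$ a.e.; dividing $\overline{\partial}\phi=\mu_\infty\partial\phi$ by $\partial\phi$ and comparing with $\mu=\overline{\partial}\phi/\partial\phi$ yields $\mu_\infty=\mu$ a.e. The main obstacle is precisely this passage to the limit in $\mu_n\partial\phi_n$: only weak convergence of $\partial\phi_n$ is available, so the a.e.\ (equivalently, via the uniform bound, strong $L^2_{\mathrm{loc}}$) convergence of $\mu_n$ is exactly what makes the product converge; everything else is soft functional analysis plus the elementary distortion estimate for quasiconformal Jacobians.
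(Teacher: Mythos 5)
Your proof is correct and is essentially the classical argument from the cited sources (Bers, Lehto--Virtanen): uniform $L^2_{\mathrm{loc}}$ bounds on $\partial\phi_n,\overline{\partial}\phi_n$ from the distortion inequality and the area formula, weak $L^2_{\mathrm{loc}}$ convergence of the derivatives, dominated convergence to handle the product $\mu_n\partial\phi_n$, and finally $J_\phi>0$ a.e.\ to divide out $\partial\phi$. The paper gives no proof of its own and simply cites the reference, so there is nothing to contrast; your write-up is a faithful and complete rendition of the standard proof.
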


%
%
%
%
%
%

As explained above, the key idea behind Theorem \ref{mainthm} is to obtain the desired entire function $f$ as the composition of a quasiregular map $\sigma\circ\eta$ as given by Theorem \ref{folding}, and a quasiconformal map $\phi$ given by Theorem \ref{MRT}, that is $f:=(\sigma\circ\eta)\circ\phi^{-1}$. In particular, $f$ and $\sigma\circ\eta$ are not conjugate to each other. As it turns out, we shall have an explicit expression for $\sigma\circ\eta$,  at least in the domains where the relevant dynamics occur. In order then to control the  dynamics of $f$ one needs control on the correction map $\phi$. This will be a consequence of a uniform bound on the dilatation of $\sigma\circ\eta$ which is essentially independent of the parameters (see Theorem \ref{folding}), and the vanishing of the support of the dilatation of $\sigma\circ\eta$ for increasing parameters. Here we will appeal to a result of Dyn'kin \cite{MR1466801} about conformality of a quasiconformal mapping at a point, and some basic results about the normality of a family of normalized $K$-quasiconformal mappings (see, for instance, Section II.5 of \cite{MR0344463}). 

\section{The map on $D$-components}
\label{interpolation}

In this Section we describe two quasiregular self-maps $\psi$, $\rho$ of the unit disc. Let us consider the $C^{\infty}$ bump function

\[ b(x)=\begin{cases} 
      \exp(1+\frac{1}{x^2-1}) & \textrm{ if } 0\leq x < 1 \\
      0 & \textrm{ if } x\geq 1,
   \end{cases}
\]
define the transformation $\phi(x):=\frac{x-r}{1-r}$, and the smooth map 
\[ \hat{\eta}(x)=\begin{cases} 
      1 & \textrm{ if }x\leq r \\
      b(\phi(x)) & \textrm{ if } r \leq x \leq 1 \\
      0 & \textrm{ if }x\geq 1.
   \end{cases}
\]
We also set $\eta(z)=\hat{\eta}\left(|z|\right)$.

\begin{lem}\label{interpolation_map} Let $\psi(z):= z^m + \delta z \eta(z)$ for $z\in\mathbb{D}$ with $r:= 1-(4\delta)/m$. There exist $m_0\in\mathbb{N}$, and $\delta_0>0$ such that if $m>m_0$ and $\delta<\delta_0$, then $r>(\delta/m)^{1/(m-1)}$ and $||\frac{\psi_{\overline{z}}}{\psi_z}||_{L^\infty(\mathbb{D})}<4/5$. 

\end{lem}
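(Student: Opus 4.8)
The statement has two parts: the elementary inequality $r > (\delta/m)^{1/(m-1)}$ and the dilatation bound $\|\psi_{\overline z}/\psi_z\|_{L^\infty(\mathbb D)} < 4/5$. I would dispatch the first part quickly: with $r = 1 - 4\delta/m$, one has $r^{m-1} \to 1$ uniformly as $m\to\infty$ (since $4\delta/m \to 0$), while $\delta/m \to 0$, so for $m$ large and $\delta$ small $r^{m-1}$ exceeds $\delta/m$; taking $(m-1)$-th roots gives the claim. This inequality is what guarantees the unique positive critical radius of $z^m + \delta z$ lies inside the region $|z| \le r$ where $\eta \equiv 1$, which is why it is recorded here.

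\textbf{The dilatation bound.} Since $\psi$ is smooth and equals the holomorphic map $z^m + \delta z$ on $\{|z|\le r\}$ and the holomorphic map $z^m$ on $\{|z|\ge 1\}$, the dilatation is supported in the annulus $A := \{r \le |z| \le 1\}$. There I would compute $\psi_z$ and $\psi_{\overline z}$ directly. Writing $\eta(z) = \hat\eta(|z|)$ and using $\partial_z |z| = \overline z/(2|z|)$, $\partial_{\overline z}|z| = z/(2|z|)$, one gets
\[
\psi_z = m z^{m-1} + \delta\eta(z) + \delta z\,\hat\eta'(|z|)\frac{\overline z}{2|z|}, \qquad
\psi_{\overline z} = \delta z\,\hat\eta'(|z|)\frac{z}{2|z|}.
\]
So $|\psi_{\overline z}| = \tfrac{\delta}{2}|z|\,|\hat\eta'(|z|)|$ and $|\psi_z| \ge m|z|^{m-1} - \delta|\eta(z)| - \tfrac{\delta}{2}|z|\,|\hat\eta'(|z|)|$. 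The key point is to control $|\hat\eta'|$: since $\hat\eta(x) = b(\phi(x))$ on $A$ with $\phi(x) = (x-r)/(1-r)$ and $1-r = 4\delta/m$, we have $\hat\eta'(x) = b'(\phi(x))/(1-r) = \tfrac{m}{4\delta}b'(\phi(x))$. Hence $\tfrac{\delta}{2}|z|\,|\hat\eta'(|z|)| \le \tfrac{\delta}{2}\cdot\tfrac{m}{4\delta}\|b'\|_\infty = \tfrac{m}{8}\|b'\|_\infty$ on $A$, where $\|b'\|_\infty$ is an absolute constant. Meanwhile on $A$ we have $|z| \ge r$, so $m|z|^{m-1} \ge m r^{m-1}$, and as noted $r^{m-1}\to 1$, so for $m$ large $m|z|^{m-1} \ge \tfrac{m}{2}$, say. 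Putting these together,
\[
\frac{|\psi_{\overline z}|}{|\psi_z|} \le \frac{\tfrac{m}{8}\|b'\|_\infty}{\tfrac{m}{2} - \delta - \tfrac{m}{8}\|b'\|_\infty}.
\]
If this were the honest bound it would not obviously be below $4/5$, which flags the main obstacle.

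\textbf{The main obstacle and its resolution.} The crude estimate above is too lossy because $\|b'\|_\infty$ is not small. The fix is that the ratio $|\psi_{\overline z}|/|\psi_z|$ need not be small everywhere on $A$ — it only needs to stay below $4/5$. Where $|\hat\eta'|$ is large, $\eta$ is bounded by $1$ (in fact $b \le e^0 = 1$) and $|z|^{m-1}$ is comparable to $r^{m-1} \approx 1$, so the denominator $m|z|^{m-1} - \delta|\eta| - \tfrac\delta2|z||\hat\eta'|$ is roughly $m - O(1)$ while the numerator is $\tfrac\delta2|z||\hat\eta'| \le \tfrac{m}{8}\|b'\|_\infty$ — so the ratio is bounded by $\tfrac{\tfrac18\|b'\|_\infty}{1 - o(1)}$, independent of $\delta$ and of $m$. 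The only way this can approach $4/5$ is if $\tfrac18\|b'\|_\infty$ is not small; but one checks $b'(x) = b(x)\cdot(-2x/(x^2-1)^2)$, and a direct maximization of $|b'|$ over $[0,1)$ gives an explicit absolute constant $C_0 := \|b'\|_\infty$, so choosing $m_0$ large enough that $mr^{m-1} \ge m(1 - \tfrac{4\delta_0}{m_0})^{m_0-1} > \tfrac{m}{8}C_0 \cdot \tfrac{5}{4} + m$ (possible since the left side grows like $m$ with coefficient $1$ while we only need it to beat $\tfrac{5C_0}{32}m + m$ — here one must actually verify $1 > \tfrac{5C_0}{32}$ numerically, or else introduce a sharper pointwise estimate exploiting that $b$ and $b'$ cannot both be large) yields the bound. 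Concretely, I would: (1) fix the absolute constant $C_0 = \max_{[0,1)}|b'|$ and compute it; (2) if $C_0 < 32/5$, the crude bound suffices with $m_0$ chosen so $r^{m-1} > 15/16$ and $\delta_0$ small; (3) if not, replace the uniform bound $|\eta|\le 1$ by the pointwise product estimate $|\hat\eta'(x)| = \tfrac{m}{4\delta}|b'(\phi(x))|$ together with $|\eta(x)| = b(\phi(x))$ and optimize $m|z|^{m-1} - \delta b - \tfrac{m}{8}|b'|$ against $\tfrac{m}{8}|b'|$ over the single real variable $t = \phi(|z|)\in[0,1]$, using $m|z|^{m-1}\ge \tfrac{m}{2}$, which reduces to showing $\max_{t\in[0,1]} \tfrac{|b'(t)|}{4 - |b'(t)|} < 4/5$, i.e. $9|b'(t)| < 16$, i.e. $\|b'\|_\infty < 16/9$ — and this is the clean numerical fact to verify about the fixed bump function $b$. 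I expect this last verification (that $\max|b'| < 16/9$) to be the genuine crux, everything else being bookkeeping in $m$ and $\delta$.
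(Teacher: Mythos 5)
Your decomposition and initial computations track the paper's, but the dilatation estimate does not close. The culprit is the simplification $m|z|^{m-1}\ge m/2$: after it, the limiting ratio (taking $m\to\infty$, $\delta\to 0$) is $\dfrac{|b'|/8}{1/2 - |b'|/8}=\dfrac{|b'|}{4-|b'|}$, and the target $<4/5$ forces $\|b'\|_{\infty}<16/9$. That inequality is \emph{false}. Solving $b''(x)=0$ gives a unique interior critical point $x_0=3^{-1/4}$, and
\[
|b'(x_0)| \;=\; b(x_0)\cdot\frac{2x_0}{(x_0^2-1)^2}
\;=\; \exp\!\Bigl(1-\tfrac{3+\sqrt3}{2}\Bigr)\cdot\frac{2\cdot 3^{-1/4}}{\bigl(1-3^{-1/2}\bigr)^{2}}
\;\approx\; 2.17 \;>\; \tfrac{16}{9}\approx 1.78 .
\]
So the numerical fact you flag as ``the genuine crux'' and leave unchecked is simply untrue, and no amount of bookkeeping in $m$ and $\delta$ rescues the constant $1/2$.

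The paper keeps the $\delta$-dependence in the denominator instead of throwing it out. After dividing, as you do, both numerator and denominator by $m|z|$, the denominator is $|z|^{m-2} - \delta|\eta|/(m|z|) - |b'|/8 \ge (1-4\delta/m)^{m-2} - \delta/(m-4\delta) - |b'|/8 \to e^{-4\delta} - |b'|/8$ as $m\to\infty$, which for small $\delta$ is near $1 - |b'|/8$ rather than $1/2 - |b'|/8$. That changes the threshold to $\|b'\|_\infty < 32/9\approx 3.56$, which does hold. The paper runs this through with the coarser (true) bound $|b'|\le e$, noting $e<32/9$, choosing $\delta_0$ with $8e^{-1-4\delta}>5/2$ and $m_0$ so that $(1-4\delta/m)^{m-2}>e^{-4\delta}-e/64$, arriving at a denominator $\ge (e/8)(5/4)$ and hence a ratio $\le 4/5$. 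The factor of roughly two that you ceded is exactly the slack the arithmetic needs, and it is recoverable only because $\delta$ (not just $m$) is a free small parameter; your step (3) still silently carries the $1/2$ into the reduction, which is why it lands on the wrong target constant.
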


\begin{proof} Using the chain rule, we compute (for any $m\in\mathbb{N}$ and $\delta>0$):

\[ \psi_z(z)=mz^{m-1}+\delta\eta(z)+\delta z\eta_z(z)\quad\text{and }\quad  \psi_{\overline{z}}(z)=\delta z\eta_{\overline{z}}(z). \]

\vspace{2mm}

\noindent Note that $|\hat{\eta}'(x)|=|b'(\phi(x))|\cdot|\phi'(x)|$ on the relevant interval. Solving $b''(x)=0$, one sees that $|b'(x)|$ has a maximum at $x_0=(1/3)^{1/4}$ with $|b'(x_0)|<e$, so that $|b'(x)|\leq e$ for $x\in[0,1]$. Since $\phi'(x)=1/(1-r)$, we have that $|\hat{\eta}'(x)|\leq e/(1-r)$ for all $x>0$. Using the chain rule again, we have 

\[ \left| \frac{\partial\eta}{\partial z}(z) \right| = \left| \hat{\eta}'(|z|) \right|\cdot\left| \frac{\partial|z|}{\partial z} \right| \leq \frac{e}{2(1-r)}\quad \textrm{and} \quad \left| \frac{\partial\eta}{\partial \overline{z}}(z) \right| = \left| \hat{\eta}'(|z|) \right|\cdot\left| \frac{\partial|z|}{\partial \overline{z}} \right| \leq \frac{e}{2(1-r)},\]

\vspace{2mm}

\noindent where we have used the fact that 
$$\frac{\partial|z|}{\partial z}=\frac{\overline{z}}{2|z|} \quad {\rm and} \quad \frac{\partial|z|}{\partial \overline{z}}=\frac{z}{2|z|}.$$
Hence

\begin{equation}\label{ineq1} \left|\frac{\psi_{\overline{z}}(z)}{\psi_z(z)}\right| \leq \frac{\delta|z|\frac{e}{2(1-r)}}{\left| m|z|^{m-1} - \delta|\eta(z)| - \delta|z|\frac{e}{2(1-r)} \right| }.  \end{equation}


\noindent We show that since we have chosen $r:= 1-(4\delta)/m$, there is some $M_1\in\mathbb{N}$ so that for $m\geq M_1$ one indeed has $r>(\delta/m)^{1/(m-1)}$ for any $0<\delta<1$. First note that the inequality $r>(\delta/m)^{1/(m-1)}$ can be rearranged to $m(1-(\delta/m)^{1/(m-1)})>4\delta$. Since $\delta < 1 $ it is clear that $m(1-(\delta/m)^{1/(m-1)})>m(1-(1/m)^{1/(m-1)})$, and so it will suffice to show that $m(1-(1/m)^{1/(m-1)})\rightarrow\infty$ as $m\rightarrow\infty$, and this is a simple calculation done by applying L'Hopital's rule to the quotient $(1-(1/m)^{1/(m-1)})/(1/m)$. Since we have fixed $r=1-\frac{4\delta}{m}$, we have 
$e/(2(1-r))=em/(8\delta)$, and so (\ref{ineq1}) turns to be

\begin{equation}
\label{ineq2} \left|\frac{\psi_{\overline{z}}(z)}{\psi_z(z)}\right| \leq \frac{\delta|z|\frac{e}{2(1-r)}}{\left| m|z|^{m-1} - \delta|\eta(z)| - \delta|z|\frac{e}{2(1-r)} \right| }   \leq \frac{e/8}{\left||z|^{m-2} - \frac{\delta|\eta(z)|}{m|z|} - \frac{e}{8} \right|}.\end{equation}

\noindent Moreover, for $|z|>r$ (where $\psi_{\overline{z}}(z)\not=0$), we have

\[ |z|^{m-2} - \frac{\delta|\eta(z)|}{m|z|} - \frac{e}{8} \geq \left(1-\frac{4\delta}{m}\right)^{m-2} - \frac{\delta}{m-4\delta} - \frac{e}{8}, \]

\noindent and 
$$
\left(1-\frac{4\delta}{m}\right)^{m-2} \rightarrow e^{-4\delta}\quad {\rm and} \quad  \delta/(m-4\delta) \rightarrow 0, \quad  {\rm as} \ \ m\rightarrow\infty.
$$  
Let $\delta_0$ be such that for $\delta<\delta_0$, $(8\delta)/(e(1-4\delta))<1/8$ and $8e^{-4\delta-1}>5/2$, where we note that $8/e > 5/2$. Hence since $(8\delta)/(e(m-4\delta))< (8\delta)/(e(1-4\delta))$, we have $(8\delta)/(e(m-4\delta))<1/8$ for $\delta<\delta_0$ and all $m\in\mathbb{N}$. Let $m_0\geq M_1$ be such that for $m\geq m_0$ and $\delta<\delta_0$, $\left(1-\frac{4\delta}{m}\right)^{m-2} > e^{-4\delta}-e/(64)$. Then for $m \geq m_0$ and $\delta < \delta_0$, 

\begin{equation*}
\begin{split} 
\left(1-\frac{4\delta}{m}\right)^{m-2} - \frac{\delta}{m-4\delta} - \frac{e}{8}  &\geq e^{-4\delta} - \frac{e}{64} - \frac{e}{64} - \frac{e}{8} \\
&= \left(e/8\right)\left(8e^{-1-4\delta} - 1/4 - 1\right) > \left(e/8\right)5/4. 
\end{split}
\end{equation*}

Finally we conclude from (\ref{ineq2}) that

\[\left|\frac{\psi_{\overline{z}}(z)}{\psi_z(z)}\right| \leq  \frac{e/8}{\left||z|^{m-2} - \frac{\delta|\eta(z)|}{m|z|} - \frac{e}{8} \right|} \leq \frac{e/8}{\left( \left(1-\frac{4\delta}{m}\right)^{m-2} - \frac{\delta}{m-4\delta} - \frac{e}{8} \right)} \leq 4/5. \]

\end{proof}

The map $\psi$ defined in Lemma \ref{interpolation_map} is an interpolation between $z\rightarrow z^m+\delta z$ in a subdisc of $\mathbb{D}$ and $z\rightarrow z^m$ on $\partial\mathbb{D}$. We will use the notation $\psi_{\delta, m}$ when we wish to emphasize the dependence of the map $\psi$ on the parameters $\delta>0$ and $m\in\mathbb{N}$. The $m-1$ critical points $(c_k)$ and $m-1$ critical values $(v_k)$ of the map $\psi$ are given by
$$
c_k=\left(\frac{-\delta}{m}\right)^{\left(\frac{1}{m-1}\right)}, \quad v_k=\delta\left(\frac{-\delta}{m}\right)^{\left(\frac{1}{m-1}\right)}\left(\frac{m-1}{m}\right).
$$
If we fix $\delta$ and let $m\rightarrow\infty$, the critical points will tend to $\partial\mathbb{D}$, while the critical values tend, in modulus, to $\delta$. 

Next we recall, from \cite{Bis15}, the definition of a quasiconformal map $\rho_w:\mathbb{D}\rightarrow\mathbb{D}$ which is the identity on $|z|=1$, conformal on a region containing $0$, and perturbs the origin to $w$:

 \[ \rho_w(z)=\begin{cases} 
      z+w & \textrm{ if } 0\leq|z|\leq1/8 \\
      z\frac{(8|z|-1)}{7}+(z +w)\frac{8-8|z|}{7} & \textrm{ if } 1/8\leq|z|\leq 1.
   \end{cases}
\]

\begin{lem}
\label{rho}

There exists a constant $k_0<1$ independent of $w\in D(0,3/4)$ such that $||\frac{(\rho_w)_{\overline{z}}}{(\rho_w)_z}||_{L^\infty(\mathbb{D})}<k_0$. 

\end{lem}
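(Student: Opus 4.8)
The plan is to compute the Beltrami coefficient of $\rho_w$ directly. On $\{|z|\le 1/8\}$ the map is the translation $z\mapsto z+w$, hence conformal, so its dilatation is supported in the annulus $A=\{1/8\le|z|\le1\}$ and it suffices to estimate it there, uniformly in $w\in D(0,3/4)$.

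First I would put the definition of $\rho_w$ on $A$ into a more usable form. Expanding $z\frac{8|z|-1}{7}+(z+w)\frac{8-8|z|}{7}$ and collecting the coefficient of $z$ (which is $1$), one obtains
\[
\rho_w(z)=z+\frac{8w}{7}\bigl(1-|z|\bigr),\qquad z\in A,
\]
which is continuous across $|z|=1/8$ (both branches give $z+w$ there) and is the identity on $|z|=1$. Using the Wirtinger derivatives $\frac{\partial|z|}{\partial z}=\frac{\overline z}{2|z|}$ and $\frac{\partial|z|}{\partial\overline z}=\frac{z}{2|z|}$, this gives
\[
(\rho_w)_z = 1-\frac{4w\overline z}{7|z|},\qquad (\rho_w)_{\overline z}=-\frac{4wz}{7|z|},
\]
and therefore, on $A$,
\[
\left|\frac{(\rho_w)_{\overline z}}{(\rho_w)_z}\right|=\frac{\tfrac{4|w|}{7}}{\bigl|\,1-\tfrac{4w\overline z}{7|z|}\,\bigr|}.
\]

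Finally I would use the hypothesis $|w|<3/4$ to bound numerator and denominator separately: the numerator satisfies $\tfrac{4|w|}{7}<\tfrac{3}{7}$, while $\bigl|\tfrac{4w\overline z}{7|z|}\bigr|=\tfrac{4|w|}{7}<\tfrac{3}{7}$ forces $\bigl|1-\tfrac{4w\overline z}{7|z|}\bigr|\ge 1-\tfrac{3}{7}=\tfrac{4}{7}$. Hence the dilatation of $\rho_w$ is bounded above by $\tfrac{3/7}{4/7}=\tfrac{3}{4}$, independently of $w\in D(0,3/4)$, so one may take $k_0=3/4$. There is no genuine obstacle beyond correctly handling the non-holomorphic dependence on $|z|$; the conceptual point is simply that the "correction vector'' $\tfrac{8w}{7}(1-|z|)$ has modulus uniformly controlled by $|w|$, and it is this uniformity that yields a bound $k_0<1$ not depending on $w$.
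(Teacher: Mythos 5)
Your computation is correct, and it supplies a self-contained proof of a lemma that the paper itself does not prove but delegates to Section~3 of \cite{FGJ15} (where the argument is of the same direct Beltrami-coefficient type). The simplification
\[
z\,\frac{8|z|-1}{7}+(z+w)\,\frac{8-8|z|}{7}=z+\frac{8w}{7}\bigl(1-|z|\bigr)
\]
is right, as are the Wirtinger derivatives and the resulting ratio
\[
\left|\frac{(\rho_w)_{\overline z}}{(\rho_w)_z}\right|=\frac{4|w|/7}{\bigl|1-\tfrac{4w\overline z}{7|z|}\bigr|}\le\frac{4|w|/7}{1-4|w|/7}<\frac{3/7}{4/7}=\frac34
\]
for $|w|<3/4$, and since the dilatation vanishes on $\{|z|<1/8\}$ this gives $k_0=3/4$ independently of $w$. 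Two minor points worth keeping in mind: the map is only Lipschitz, not $C^1$, across $|z|=1/8$ (the one-sided derivatives do not match), but that set has measure zero and is irrelevant to the essential supremum; and the bound $\bigl|1-\tfrac{4w\overline z}{7|z|}\bigr|\ge 4/7>0$ is also the reason $\rho_w$ is an orientation-preserving local homeomorphism on the annulus, which is implicitly needed for it to be a quasiconformal self-map of $\mathbb D$.
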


\noindent For the proof of Lemma \ref{rho}, see Section 3 of \cite{FGJ15}. In the following sections, we will consider the following quasiregular self-map of the unit disc:

\begin{equation}\label{composition} \rho_w \circ \psi_{\delta, m}: \mathbb{D} \rightarrow\mathbb{D}.
\end{equation}

\begin{prop}\label{small_dilatation} Let $s<1$, and let $\mu^{w, \delta, m}:=(\rho_w \circ \psi_{\delta, m})_{\overline{z}}/(\rho_w \circ \psi_{\delta, m})_z$. There exists $m_0 \in \mathbb{N}$ (depending on $s$) and $\delta_0$, $k_0$ (independent of $s$), such that if $m\geq m_0$, $\delta<\delta_0$, and $w\in D(0,3/4)$, one has $||\mu^{w, \delta, m}||_{L^{\infty}}<k_0$ and  $\textrm{supp}(\mu^{w, \delta, m}) \subset \{ z \in \mathbb{D} : |z|>s\}$.

\end{prop}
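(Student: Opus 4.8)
The plan is to read off both assertions from the chain rule for complex dilatations, applied to the composition $h:=\rho_w\circ\psi_{\delta,m}$, feeding in Lemma~\ref{interpolation_map} to control $\psi_{\delta,m}$ and Lemma~\ref{rho} to control $\rho_w$. Write $\psi:=\psi_{\delta,m}$, $\mu_\psi:=\psi_{\overline z}/\psi_z$ and $\mu_{\rho_w}:=(\rho_w)_{\overline z}/(\rho_w)_z$. Off the finite (hence Lebesgue-null) critical set of $\psi$ one has
\[
\mu^{w,\delta,m}\;=\;\frac{\mu_\psi+(\mu_{\rho_w}\circ\psi)\,\theta}{1+(\mu_{\rho_w}\circ\psi)\,\overline{\mu_\psi}\,\theta},\qquad \theta:=\overline{\psi_z}/\psi_z .
\]
Before using this I would note that $h$ genuinely is a quasiregular self-map of $\mathbb{D}$, so that the composition makes sense: this follows from $\psi|_{\partial\mathbb{D}}(z)=z^m$ together with the elementary bound $|\psi(z)|<1$ for $|z|<1$, valid once $\delta_0$ is small (the only mildly delicate region being the collar $r\leq|z|<1$, where $z^m$ is close to the unit circle while $\eta$ has already decayed).

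\emph{The uniform bound.} Take $\delta<\delta_0$ and $m\geq m_0$ at least as small/large as the constants provided by Lemma~\ref{interpolation_map}. Then $\|\mu_\psi\|_\infty<4/5$, so $\psi$ is $9$-quasiregular, and by Lemma~\ref{rho} the map $\rho_w$ is $K'$-quasiconformal with $K':=(1+k_0')/(1-k_0')$ for an absolute constant $k_0'<1$. Hence $h$ is $9K'$-quasiregular, and
\[
\|\mu^{w,\delta,m}\|_\infty\;\leq\;\frac{9K'-1}{9K'+1}\;=:\;k_0\;<\;1 ,
\]
a constant that depends only on $k_0'$, and in particular is independent of $s$, $w$, $\delta$ and $m$. (Alternatively one estimates the displayed quotient directly: its denominator is $\geq 1-(4/5)k_0'>0$.)

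\emph{The support.} Since the numerator in the displayed formula vanishes at every point $z$ for which both $\mu_\psi(z)=0$ and $\mu_{\rho_w}(\psi(z))=0$, we get
\[
\operatorname{supp}(\mu^{w,\delta,m})\;\subseteq\;\operatorname{supp}(\mu_\psi)\;\cup\;\psi^{-1}\bigl(\operatorname{supp}(\mu_{\rho_w})\bigr).
\]
For the first set: $\psi_{\overline z}(z)=\delta z\,\eta_{\overline z}(z)$ and $\eta$ is radially constant off $\{r\leq|z|\leq1\}$, where $r=1-4\delta/m$, so $\operatorname{supp}(\mu_\psi)\subseteq\{r\leq|z|\leq1\}$; since $r\to1$ as $m\to\infty$ with $\delta<\delta_0$, enlarging $m_0$ forces $r>s$, and this is precisely where the dependence of $m_0$ on $s$ enters. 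For the second set: $\rho_w(z)=z+w$ on $\{|z|\leq1/8\}$ yields $\operatorname{supp}(\mu_{\rho_w})\subseteq\{1/8\leq|z|\leq1\}$, so it suffices to show $\psi(\{|z|\leq s\})\subseteq\{|z|<1/8\}$; but for $|z|\leq s$ one has $|\psi(z)|\leq s^m+\delta|z|\,\eta(z)<s^m+\delta_0$, which is $<1/8$ provided $\delta_0\leq1/16$ and $m_0$ is chosen (depending on $s$) with $s^m<1/16$ for all $m\geq m_0$. Combining these, $\operatorname{supp}(\mu^{w,\delta,m})\subseteq\{r\leq|z|\leq1\}\cup\{|z|>s\}$, which equals $\{|z|>s\}$ since $r>s$.

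I do not expect a real obstacle: the two geometric facts doing the work are that $\psi$ is nearly $z\mapsto z^m$ near $\partial\mathbb{D}$, so its dilatation is squeezed into a thin collar, and that $\psi$ crushes $\{|z|\leq s\}$ deep inside the disk $\{|z|<1/8\}$ on which $\rho_w$ is affine. What needs care is the order of the quantifiers: one must fix $\delta_0:=\min\{1/16,\delta_0'\}$, where $\delta_0'$ is the constant of Lemma~\ref{interpolation_map}, together with the constant $k_0$ \emph{first}, independently of $s$, and only afterwards choose $m_0=m_0(s)$ large enough that simultaneously $1-4\delta_0/m>s$ and $s^m<1/16$ for every $m\geq m_0$. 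The only analytic subtlety is that the chain-rule identity is available only off the null critical set of $\psi$ and off the null circle $\{|z|=1/8\}$ on which $\rho_w$ is not $C^1$, which affects neither essential suprema nor essential supports.
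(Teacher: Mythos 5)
Your proof is correct and takes essentially the same approach as the paper: bound $\|\mu^{w,\delta,m}\|_\infty$ via the composition rule for Beltrami coefficients (equivalently, via multiplicativity of $K$-constants), and then show that $\operatorname{supp}(\mu_\psi)$ is squeezed into the annulus $\{r\leq|z|\leq1\}$ while $\psi(\{|z|\leq s\})$ lands inside $\{|z|<1/8\}$, on which $\rho_w$ is affine. You are in fact a bit more careful than the paper: you write out the chain-rule identity for $\mu_{\rho_w\circ\psi}$ explicitly and produce a concrete $k_0=(9K'-1)/(9K'+1)<1$, whereas the paper's treatment of the norm bound is rather terse (and contains a typo), simply citing Lemmas \ref{interpolation_map} and \ref{rho}; you also pause to confirm that $\psi_{\delta,m}$ genuinely maps $\mathbb{D}$ into itself so the composition is defined, a point the paper leaves implicit. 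These are small improvements in rigor, not a different route.
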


\begin{proof} Fix $s<1$, and let $\delta_0$ be as in Lemma \ref{interpolation_map}, with the extra condition that $\delta_0<1/16$. Let $k_0$ be as in Lemma \ref{rho}, ensuring also that $k_0<4/5$. The statement $||\iota^{w,\delta,m}_{\overline{z}}/\iota^{w,\delta,m}_{z}||_{L^{\infty}}<k_0$ then follows. 

Since $r:=1-4\delta/m>1-4/m\rightarrow1$ as $m\rightarrow\infty$ and $\psi_{\delta, m}$ is holomorphic on $r\mathbb{D}$, it follows that $\textrm{supp}((\psi_{\delta, m})_{\overline{z}}/(\psi_{\delta, m})_{z})\subset  \{ z \in \mathbb{D} : |z|>s\}$ for large $m$. It remains to consider the pullback of $\textrm{supp}(\rho_w)=\{ z\in\mathbb{D} : |z| \geq 1/8 \}$ under $\psi_{\delta, m}$. Since $\delta<\delta_0<1/16$, for large $m$ and $|z|<s$ we have that $s^m<1/16$, so that $|z^m+\delta z|<1/8$, and hence the pullback of $\textrm{supp}(\rho_w)=\{ z\in\mathbb{D} : |z| \geq 1/8 \}$ under $\psi_{\delta, m}$ is contained in $s<|z|\leq1$. 

\end{proof}


\section{A Base family of quasiregular maps} 

In this Section we define a family of quasiregular maps $g$ depending on several sets of parameters from which we will, in Section 5, choose one such $g$ for which $g\circ\phi^{-1}$ has a wandering domain, where $\phi$ is a quasiconformal map from Theorem \ref{MRT} such that $g\circ\phi^{-1}$ is holomorphic. First we define $g$ in a subset of the plane:

\begin{equation} \label{quasiregular_definition} g(z)=\begin{cases} 
      \sigma(\lambda\sinh(z)) & \textrm{ if } z\in S^+ \\
      \rho_{w_n} \circ \psi_{\delta_n, m_n} \circ (z\rightarrow z-z_n) & \textrm{ if } z \in D_n, \\
   \end{cases}
\end{equation}

\noindent where $\sigma:\mathbb{H}_r\rightarrow\mathbb{C}\setminus[-1,1]$ is as defined as in the last paragraph of Section 2.1, such that $\sigma(z)=\exp(z)$ for $\textrm{Re}(z)>2\pi$. We have also emphasized the dependence in the definition of $g$ on several sets of parameters: $\lambda\in\mathbb{R}^+$, and $w, \delta, m$ as discussed in Section \ref{interpolation}. We have noted in (\ref{quasiregular_definition}), furthermore, that $w, \delta, m$ are allowed to depend on $n$. The points $z_n$ depend on $\lambda$ (see Subsection 2.2). We will use the notation $\bf{w}$ to denote the vector $(w_1, w_2, ...)$, and similarly for $\boldsymbol{\delta}$, $\boldsymbol{m}$. We will use either the notation $w_n$ or $\boldsymbol{w}(n)$ to denote the $n^{\textrm{th}}$ element of the vector $\boldsymbol{w}$, and similarly for $\boldsymbol{\delta}$, $\boldsymbol{m}$.

\begin{thm}\label{g_extension} There exist $m_0\in\mathbb{N}$, $\delta_0>0$, and  $k_0<1$ such that if $\boldsymbol{m}(n)>m_0$, $0\leq\boldsymbol{\delta}(n)<\delta_0$, and $\boldsymbol{w}(n)\in D(0,3/4)$ for all $n\in\mathbb{N}$, then, for any $\lambda>1$, $g$ as in (\ref{quasiregular_definition}) may be extended to a quasiregular map $g:\mathbb{C}\rightarrow\mathbb{C}$ such that $||g_{\overline{z}}/g_z||_{L^\infty(\mathbb{C})}<k_0$. The function $g:\mathbb{C}\rightarrow\mathbb{C}$ satisfies $g(-z)=g(z)$, $g(\overline{z})=\overline{g(z)}$ for all $z\in\mathbb{C}$, and the singular set of $g$ consists only of the critical values 

\[ \pm1 \emph{  and  } \left( w_n+\delta_n\left(\frac{-\delta_n}{m_n}\right)^{\left(\frac{1}{m_n-1}\right)}\left(\frac{m_n-1}{m_n}\right) \right)_{n=1}^{\infty}. \]

\end{thm}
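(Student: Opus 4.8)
The plan is to realize $g$ as (a quasiconformal deformation of) the output of Bishop's Folding Theorem (Theorem \ref{folding}) applied to the graph $T$ described in Section \ref{subsection:prototype}, and then read off the conclusions from that theorem together with the local formulas defining $g$. First I would verify that the graph $T$ (the half-strips together with the disks $D_n$ and the connecting segments, reflected through $z\mapsto\pm\overline z$) satisfies hypotheses (i)--(iv) of Theorem \ref{folding}: no two $D$-components share an edge since each $D_n$ is surrounded by $R$-components; $T$ is bipartite with uniformly bounded geometry because it is built from finitely many shapes up to translation; the $\tau$-maps on $D_n$ send vertices to roots of unity by construction; and on the $R$-components the $\tau$-sizes are uniformly bounded below (this is exactly what was arranged in \cite{Bis15,FGJ15} for $\lambda>1$, using that $a_n$ is close to $n\pi$). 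The only genuinely new point compared to \cite{Bis15} is that on each $D_n$ we are prescribing the quasiregular model $\rho_{w_n}\circ\psi_{\delta_n,m_n}\circ(z\mapsto z-z_n)$ instead of $\rho_{w_n}\circ(z\mapsto (z-z_n)^{m_n})$.

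Next I would invoke Proposition \ref{small_dilatation}: choosing $m_0$, $\delta_0$, $k_0$ as there, the model map on each $D_n$ has dilatation bounded by $k_0<1$ uniformly in $n$ (and in $w_n\in D(0,3/4)$, $\delta_n<\delta_0$, $m_n>m_0$), and its support is contained in an annulus $\{s<|z|<1\}$, in particular compactly inside $D_n$. Since $\psi_{\delta_n,m_n}$ agrees with $z\mapsto z^{m_n}$ on $\partial\mathbb D$ (because $\eta\equiv 0$ near $|z|=1$), the boundary behaviour on each $D_n$ is identical to Bishop's, so the folding construction glues exactly as in \cite{Bis15}: one replaces $\tau$ by the quasiconformal deformation $\eta$ inside $T(r_0)$, extends $\sigma$ as in the last paragraph of Section 2.1, and obtains a quasiregular $g:\mathbb C\to\mathbb C$ agreeing with (\ref{quasiregular_definition}) off $T(r_0)$. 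The global dilatation bound $\|g_{\overline z}/g_z\|_{L^\infty}<k_0$ comes from combining three uniform bounds: the $K$ from Theorem \ref{folding} (depending only on the bounded-geometry constants of $T$), the bound $k_0$ on the $D$-component models from Proposition \ref{small_dilatation}, and the fact that on $S^+$ and the $R$-components $g$ is a composition of holomorphic maps with the quasiconformal deformation confined to $T(r_0)$; enlarging $k_0$ if necessary absorbs all of these into one constant $<1$.

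The symmetry relations $g(-z)=g(z)$ and $g(\overline z)=\overline{g(z)}$ follow because $T$, the labels, and all the prescribed models were chosen symmetric under $z\mapsto -z$ and $z\mapsto\overline z$ (note $\sigma(\lambda\sinh z)$ and $\psi_{\delta,m}$ are odd and real-symmetric, and $\rho_w$ can be taken symmetric), so by uniqueness in the folding construction the deformation respects these symmetries. For the singular set: Theorem \ref{folding} guarantees $g$ has no asymptotic values and that its critical values are exactly $\pm 1$ together with the critical values assigned by the $D$-components. On $D_n$ the assigned critical value is $\rho_{w_n}$ applied to the critical value of $\psi_{\delta_n,m_n}$; but that critical value has modulus tending to $\delta_n<\delta_0<1/16<1/8$, so it lies in the region where $\rho_{w_n}(z)=z+w_n$, giving precisely $w_n+\delta_n\big(\tfrac{-\delta_n}{m_n}\big)^{1/(m_n-1)}\big(\tfrac{m_n-1}{m_n}\big)$ as in the formula for $v_k$ in Section \ref{interpolation}.

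The main obstacle I anticipate is checking that the \emph{modified} $D$-component maps still satisfy the boundary-matching and bounded-geometry requirements that Bishop's folding gluing needs — i.e., that replacing $(z-z_n)^{m_n}$ by $\psi_{\delta_n,m_n}$ does not disturb hypothesis (iii) or the $\tau$-size bounds of hypothesis (iv), and that the support of the extra dilatation genuinely sits away from $T(r_0)$ so it does not interfere with the folding deformation near the edges. This is precisely what Proposition \ref{small_dilatation} (support in $\{|z|>s\}$, which we may take with $s$ as close to $1$ as we like) and the identity $\psi_{\delta_n,m_n}|_{\partial\mathbb D}=(\cdot)^{m_n}$ are designed to handle, so the argument reduces to quoting these facts; the remaining work is bookkeeping of the constants to produce a single $k_0<1$.
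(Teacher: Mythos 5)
Your proposal is correct and follows essentially the same route as the paper: invoke Proposition~\ref{small_dilatation} (which packages Lemma~\ref{interpolation_map} and Lemma~\ref{rho}) for the uniform bound and support of the dilatation on the $D$-components, appeal to Bishop's Folding Theorem (Theorem~\ref{folding}, i.e.\ Theorem~7.2 of \cite{Bis15}) for the extension off $\cup D_n$, the complementary dilatation bound, the absence of asymptotic values, and the identification of the critical values, and observe that the symmetries and the explicit formula for the $D$-component critical values are built into the definition. The paper's proof is terser but relies on exactly the same ingredients, so there is no substantive difference to report.
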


\begin{proof} The bound $||g_{\overline{z}}/g_z||_{L^\infty(\cup D_n)}<k_0$ follows from considering $m_0$ as in Lemma \ref{interpolation_map}, and $\delta_0$, $k_0$ as in Proposition \ref{small_dilatation}. The extension of $g$ and the bound on $||g_{\overline{z}}/g_z||_{L^\infty(\mathbb{C}\setminus(\cup_nD_n))}$ are consequences of Theorem 7.2 of \cite{Bis15} as described in Section 17 of \cite{Bis15} (see also Section 3 of \cite{FGJ15}). The symmetry $g(-z)=g(z)$, $g(\overline{z})=\overline{g(z)}$ is built into the definition of $g$. The singular values 

\begin{equation}\label{expression_for_critical_values} \left( w_n+\delta_n\left(\frac{-\delta_n}{m_n}\right)^{\left(\frac{1}{m_n-1}\right)}\left(\frac{m_n-1}{m_n}\right) \right)_{n=1}^{\infty}. \end{equation}

\noindent arise from the critical values of $(g|_{D_n})_{n=1}^{\infty}$. That there are no other singular values of $g$ apart from $\pm1$ follows from Theorem 7.2 of \cite{Bis15}.
\end{proof}

\begin{rem}\label{changing_delta_w} Let $V:= \cup_{n=1}^{\infty} D_n$, and let $U:=\mathbb{C}\setminus \left( V \cup \textrm{conj}(V) \cup -V \cup -\textrm{conj}(V) \right)$, where $\textrm{conj}$ denotes complex conjugation.  Note that the extension of $g$ in $U$ is independent of a choice of $\boldsymbol{\delta}$, $\boldsymbol{w}$ since varying these parameters does not change the definition of $g$ on $\partial D_n$.
\end{rem}

\begin{definition} Let $\delta_0$ be as given in Theorem \ref{g_extension}. We call the parameters $\boldsymbol{\delta}$, $\boldsymbol{w}$ \emph{permissible} if $0\leq\boldsymbol{\delta}(n)<\delta_0$, and $\boldsymbol{w}(n)\in D(0,3/4)$ for all $n\in\mathbb{N}$.
\end{definition}

\begin{prop}\label{normalization} There exist $\lambda_0\in\mathbb{R}$, $\boldsymbol{m}_0\in\mathbb{N}^{\mathbb{N}}$ such that if $\boldsymbol{\delta}$, $\boldsymbol{w}$ are permissible, $\lambda>\lambda_0$, and $\boldsymbol{m}\geq\boldsymbol{m}_0$, then there exist constants $a_1, a_0, a_{-1} \in \mathbb{C}$ such that

\begin{equation}\label{normalization_equation} \phi(z)=a_1z+a_0+\frac{a_{-1}}{z} + O\left(\frac{1}{|z|^2}\right) \emph{   as   } z\rightarrow\infty,
\end{equation}

\noindent where $\phi$ is any quasiconformal mapping as in Theorem \ref{MRT} such that $g\circ\phi^{-1}$ is holomorphic.

\end{prop}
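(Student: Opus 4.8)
The plan is to show that $\phi$ has an asymptotic expansion at $\infty$ by first establishing that $g$ behaves like a \emph{normalized} exponential near the essential singularity, and then showing that the correction map $\phi$ is conformal (indeed has a meromorphic expansion) at $\infty$. The key point is that $g$ agrees with $\sigma(\lambda\sinh z)=\exp(\lambda\sinh z)$ for $z$ in the far right part of $S^+$ (where $\operatorname{Re}(\lambda\sinh z)>2\pi$), and by the symmetries $g(-z)=g(z)$, $g(\overline z)=\overline{g(z)}$, the map $g$ is holomorphic on a full neighborhood of $\infty$ minus the horizontal strips containing the $D_n$'s and their reflections, where the dilatation of $g$ is supported. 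Thus $\mu_g:=g_{\overline z}/g_z$ is supported on a set that, near $\infty$, is contained in a union of horizontal half-strips of bounded width around the lines $\operatorname{Im}(z)=\pm\pi$; moreover inside each $D_n$ the support of the dilatation shrinks (by Proposition \ref{small_dilatation}, for $\boldsymbol m$ large it lies in $\{|z-z_n|>s\}$). The upshot is that the support of $\mu_g$ near $\infty$ is a very thin set, and in particular has finite area in any sense relevant to Dyn'kin's theorem.

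The steps I would carry out, in order: (1) Normalize $\phi$ by the three-real-parameter family of affine maps so that $\phi(z)=z+o(z)$ — more precisely, using the conformal freedom in Theorem \ref{MRT} (postcomposition by an affine map, since a conformal automorphism of $\mathbb C$ is affine), we may assume $\phi$ fixes $0$ and $\infty$ and is tangent to the identity at $\infty$; write $a_1$ for the leading coefficient, which we are free to set to $1$ or keep general. (2) Observe that $\mu_g\equiv 0$ on a neighborhood of $\infty$ intersected with the complement of the strips, and estimate $\int_{|z|>R}|\mu_g(z)|\,|z|^{-2}\,dA(z)<\infty$ — in fact the integral over $|z|>R$ tends to $0$ as $R\to\infty$ — using that the strips have bounded width and that inside each $D_n$ the area of $\operatorname{supp}\mu_g$ is controlled. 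Here one wants $\boldsymbol m_0$ large enough (and $\lambda_0$ large enough to push the $D_n$ out) that this integral condition holds. (3) Apply Dyn'kin's theorem (the reference \cite{MR1466801} cited in the text, on conformality of a quasiconformal map at a point, applied at $\infty$ after an inversion) to conclude that $\phi$ is conformal at $\infty$, i.e. $\phi(z)=a_1 z+a_0+o(1)$. (4) Bootstrap to the next order: since $\mu_g$ vanishes on a neighborhood of $\infty$ off the thin strip set, the standard integral representation $\phi(z)=a_1 z+a_0 - \frac{1}{\pi}\int_{\mathbb C}\frac{(\phi_{\overline z}-a_1\mu_g)(\zeta)}{\zeta-z}\,dA(\zeta)$ — or more simply the representation of $\log(\phi(z)/z)$ or of $\phi(z)-a_1z-a_0$ via the solid Cauchy transform of $\phi_{\overline z}$, which is compactly supported modulo the strips — gives that $\phi(z)-a_1z-a_0$ is, near $\infty$, a Cauchy transform of an $L^1$-density supported in $\{|z|>R\}\cap(\text{strips})$ plus a compactly supported piece, and expanding $\frac{1}{\zeta-z}=-\frac1z-\frac{\zeta}{z^2}-\cdots$ yields $\phi(z)=a_1z+a_0+\frac{a_{-1}}{z}+O(|z|^{-2})$, provided the first two moments $\int \phi_{\overline z}\,dA$ and $\int \zeta\,\phi_{\overline z}(\zeta)\,dA$ converge, which again follows from the thinness of the support and the decay of $\phi_{\overline z}$ (controlled by $\mu_g$ via $\|\phi_{\overline z}\|\lesssim\|\mu_g\|$ locally, together with $\phi_z$ being locally bounded).

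The main obstacle I expect is Step (4), making the moment integrals $\int_{\text{strip},|z|>R}\phi_{\overline z}\,dA$ and $\int_{\text{strip},|z|>R}\zeta\phi_{\overline z}\,dA$ actually converge, since the strips are unbounded and $\zeta$ grows; the saving grace must be that the dilatation $\mu_g$ is genuinely tiny far out — on the exponential side $g=\exp(\lambda\sinh z)$ is holomorphic so $\mu_g$ there is zero, and the only surviving support near $\infty$ is inside the disks $D_n$, which are \emph{bounded} and do not escape to $\infty$ as a set in the sense that matters here (they lie in a fixed horizontal strip, but each has radius $1$, and with $\boldsymbol m_0$ large the support inside $D_n$ is a thin annulus near $\partial D_n$) — wait, the $D_n$ do march off to $\infty$ along the strip, so one genuinely needs the area of $\operatorname{supp}\mu_g\cap D_n$ to decay fast in $n$, i.e. one needs to choose $\boldsymbol m_0(n)\to\infty$ fast enough that $\sum_n |z_n|^{k}\cdot\operatorname{area}(\operatorname{supp}\mu_g\cap D_n)<\infty$ for $k=0,1$; this is exactly the kind of condition that can be folded into the definition of $\boldsymbol m_0\in\mathbb N^{\mathbb N}$ in the statement, and it is compatible with the finitely-many conditions on $\boldsymbol m_0$ needed elsewhere because here we only need a lower bound growing with $n$. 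So the real work is bookkeeping: verify that $\operatorname{area}(\operatorname{supp}\mu_g\cap D_n)\to 0$ (from Proposition \ref{small_dilatation}, the support is in $\{s<|z-z_n|<1\}$ with $s=s(m_n)\to 1$), hence $\le 2\pi(1-s(m_n))\to 0$, and choose $\boldsymbol m_0(n)$ so that $(1-s(m_n))|z_n|^2$ is summable; everything else is the standard Cauchy-transform expansion of a quasiconformal map normalized at $\infty$, together with the local $L^p$ bound $\|\phi_{\overline z}\|_{L^p_{\mathrm{loc}}}\lesssim\|\mu_g\|_{L^p_{\mathrm{loc}}}$ coming from the Beurling transform having norm close to $1$ for $k_0$ close to... well, for $k_0<1$ fixed, $\|\phi_{\overline z}\|_{L^2(D(0,R))}\le C(k_0,R)$, which suffices.
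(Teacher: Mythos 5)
Your overall strategy is close to the paper's in one important respect: both proofs rest on Dyn'kin's result together with a weighted-area estimate on $\operatorname{supp}(g_{\overline z})$ far from the origin. But there is a genuine gap in the way you then reach the third-order expansion, and a second gap in how you locate the support of $\mu_g$ near $\infty$.

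First, Dyn'kin's theorem \cite{MR1466801} (``Smoothness of a quasiconformal mapping at a point'') is not merely a conformality-at-a-point criterion; it gives $C^k$-type expansions of any prescribed order, provided the corresponding weighted integral conditions on the dilatation are met. The paper therefore applies Dyn'kin directly to obtain the full expansion $\phi(z)=a_1z+a_0+a_{-1}/z+O(|z|^{-2})$ in one step, once the single estimate
\[
\int_{\operatorname{supp}(g_{\overline z})\cap\{|z|>n\}}\frac{dA(z)}{|z|^2}=O(e^{-cn})
\]
is established. Your plan instead extracts only first-order conformality from Dyn'kin and then attempts to bootstrap the $a_{-1}/z$ and $O(|z|^{-2})$ terms by expanding a solid Cauchy transform of $\phi_{\overline z}$. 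That bootstrap requires convergence of moments $\int|\zeta|^j|\phi_{\overline\zeta}|\,dA$ over the \emph{unbounded} support of $\mu_g$, and you do not establish these; indeed until the moment integrals are shown finite the Cauchy-transform representation you want to expand is not even well defined on the unbounded part of the support. This is not a matter of bookkeeping that ``folds in'' for free — it is precisely the content that the one Dyn'kin estimate takes care of, and choosing to rederive it by hand simply reopens the problem.

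Second, your Step (2) asserts that $\operatorname{supp}\mu_g$ near $\infty$ lies in horizontal strips of bounded width plus thin annuli in the $D_n$. That overlooks the vertical edges of $T$ running from $z_n+i$ to $\infty$, which are unbounded in the vertical direction and also carry (part of) the folding dilatation inside $T(r_0)$. The paper splits the estimate into $\operatorname{supp}(g_{\overline z})\cap(\cup D_n)$, handled via Proposition~\ref{small_dilatation} by taking $\boldsymbol m_0(n)\to\infty$ fast, and $\operatorname{supp}(g_{\overline z})\setminus(\cup D_n)$, handled by citing the structure of the extension from Section~17 of \cite{Bis15} (and \cite{FGJ15}) for $\lambda,\boldsymbol m$ large. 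Your ``bounded-width strip'' description does not cover the second piece, and without the exponential decay on those unbounded tubes the weighted integral need not even tend to $0$. So while your proposal correctly identifies Dyn'kin as the engine and correctly identifies that $\boldsymbol m_0$ must grow, it mischaracterizes the geometry of $\operatorname{supp}(g_{\overline z})$ and replaces the clean single application of Dyn'kin's higher-order result with an unverified moment-expansion — both of which would need to be repaired before the argument is complete.
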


\begin{proof} This is a consequence of Theorem 7.2 of \cite{Bis15} and a Theorem of Dyn'kin \cite{MR1466801}. The statement (\ref{normalization_equation}) follows from \cite{MR1466801} once we know that 

\begin{equation}\label{normalization_equation_2} \int_{\textrm{supp}(g_{\overline{z}}) \cap (|z|>n)} \frac{\textrm{d}A(z)}{|z|^2} = O(e^{-cn}) \textrm{ as } n\rightarrow\infty
\end{equation}

\noindent for some $c>0$, where $\textrm{d}A$ is area measure on $\mathbb{C}$ (see also the discussion in Section 8 of \cite{Bis15}). The estimate (\ref{normalization_equation_2}) follows on $\textrm{supp}(g_{\overline{z}})\cap (|z|>n)\cap (\cup D_n)$ by requiring $\boldsymbol{m}_0(n) \rightarrow \infty$ as $n\rightarrow\infty$ sufficiently quickly by Proposition \ref{small_dilatation}. On  $\textrm{supp}(g_{\overline{z}})\cap(|z|>n)\setminus (\cup D_n)$,  (\ref{normalization_equation_2}) follows from the extension of $g$ as in Section 17 of \cite{Bis15} for sufficiently large $\lambda$, $\boldsymbol{m}$. We note furthermore that the constants in the $O(1/|z|^2)$ term are uniform over all such choices of $\lambda$, $\boldsymbol{m}$ and permissible $\boldsymbol{\delta}$, $\boldsymbol{w}$ by \cite{MR1466801}.

\end{proof}

\begin{rem} Given $\phi$ as in Proposition \ref{normalization} satisfying (\ref{normalization_equation}), we may normalize $\phi$ so that:

\begin{equation}\label{hydrodynamical}  \phi(z)=z+\frac{a}{z} + O\left(\frac{1}{|z|^2}\right) \emph{   as   } z\rightarrow\infty
\end{equation}

\noindent for some $a\in\mathbb{C}$. This is the normalization we will always use henceforth.

\end{rem}

\begin{prop}\label{close_to_id} For any $C>0$, $\varepsilon>0$, $R\geq1$, there exist $\lambda_0\in\mathbb{R}$, $\boldsymbol{m}_0\in\mathbb{N}^{\mathbb{N}}$, such that if $\lambda>\lambda_0$, $\boldsymbol{m}\geq\boldsymbol{m}_0$ and the parameters $\boldsymbol{\delta}$, $\boldsymbol{w}$ are permissible, then there exists a quasiconformal mapping $\phi: \mathbb{C} \rightarrow\mathbb{C}$ satisfying (\ref{hydrodynamical}) such that $g\circ\phi^{-1}$ is holomorphic and: 

\begin{equation}\label{qc_map_estimate} \left| \phi(z)-z \right| < \frac{C}{|z|} \emph{ for } |z|>R, \emph{ and }
\end{equation}
\begin{equation}\label{qc_map_estimate2} \left| \phi(z)-z \right| < \varepsilon \emph{ for all } z\in\mathbb{C}.
\end{equation}

\end{prop}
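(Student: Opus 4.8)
The plan is to deduce Proposition \ref{close_to_id} from Proposition \ref{normalization} and Proposition \ref{small_dilatation} by a normal families argument, exploiting the fact that the support of the dilatation $\mu=g_{\overline z}/g_z$ can be pushed out to infinity as uniformly as we like by increasing $\boldsymbol m$ and $\lambda$. Fix $C$, $\varepsilon$, $R$. By Proposition \ref{normalization}, for $\lambda>\lambda_0$ and $\boldsymbol m\geq\boldsymbol m_0$ the normalized correction map $\phi$ has the expansion \eqref{hydrodynamical}, i.e. $\phi(z)=z+a/z+O(1/|z|^2)$ with the $O$-constant \emph{uniform} over all admissible parameters. So the key point is to control the single coefficient $a$ and the tail: we need $|a|$ small (to get \eqref{qc_map_estimate} for large $|z|$) and $\phi$ close to the identity on all of $\mathbb C$ (to get \eqref{qc_map_estimate2}).

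First I would argue by contradiction/compactness. Suppose the conclusion fails; then there is a sequence of admissible parameter choices, with $\lambda_k\to\infty$ and $\boldsymbol m_k\to\infty$ coordinatewise (fast enough for Proposition \ref{normalization} to apply), such that the associated normalized maps $\phi_k$ violate \eqref{qc_map_estimate} or \eqref{qc_map_estimate2}. Each $\phi_k$ is $k_0$-quasiconformal with $k_0<1$ independent of $k$ (Theorem \ref{g_extension}), and normalized by \eqref{hydrodynamical}; hence the family $\{\phi_k\}$ is normal (compactness of normalized $K$-quasiconformal maps, Section II.5 of \cite{MR0344463}), so after passing to a subsequence $\phi_k\to\phi_\infty$ locally uniformly, with $\phi_\infty$ either $k_0$-quasiconformal or constant — the normalization \eqref{hydrodynamical} rules out the constant case and pins down $\phi_\infty(\infty)=\infty$. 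Now the crucial input: by Proposition \ref{small_dilatation} (applied on the $D_n$, together with the corresponding statement for the $R$-component extension as in Section 17 of \cite{Bis15}), the dilatations $\mu_k$ satisfy $\mathrm{supp}(\mu_k)\subset\{|z|>\rho_k\}$ with $\rho_k\to\infty$; in particular $\mu_k\to 0$ almost everywhere. By Theorem \ref{Lehto-Virtanen}, the dilatation of $\phi_\infty$ is $0$ a.e., so $\phi_\infty$ is conformal on $\mathbb C$, and being normalized by \eqref{hydrodynamical} it must be the identity. Hence $\phi_k\to\mathrm{id}$ locally uniformly, which already gives \eqref{qc_map_estimate2} on any fixed large disk, and we must still upgrade this to a global estimate and handle the region near $\infty$.

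For the behaviour near infinity I would use the explicit expansion rather than compactness alone. Each $\phi_k$ is conformal outside $\{|z|>\rho_k\}$'s complement — more precisely $\phi_k$ is holomorphic on $\{|z|<\rho_k\}$ since $\mu_k$ is supported in $|z|>\rho_k$ — wait, rather the reverse: $\mu_k$ is supported \emph{near} infinity in the sense of \eqref{normalization_equation_2}, so I would instead run the Dyn'kin estimate: from \eqref{normalization_equation_2} with exponential decay, the coefficient $a=a_k$ in \eqref{hydrodynamical} satisfies $|a_k|\lesssim \int_{\mathrm{supp}(\mu_k)}\mathrm dA(z)/|z|^2 = O(e^{-c\rho_k})\to 0$, and similarly $\sup_{z}|\phi_k(z)-z|$ is controlled by the same integral via the Cauchy-transform representation $\phi_k(z)-z = -\tfrac1\pi\int \mu_k(\zeta)(\phi_k)_z(\zeta)\,\mathrm dA(\zeta)/(\zeta-z)\cdot(\cdots)$ — quantitatively, $\|\phi_k-\mathrm{id}\|_\infty\to 0$ and $|\phi_k(z)-z|\le |a_k|/|z| + (\text{uniform } O(1/|z|^2))$ for $|z|>R$. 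Choosing $\lambda,\boldsymbol m$ large enough that $|a_k|<C/2$ and the uniform tail constant times $1/R$ is $<C/2$ gives \eqref{qc_map_estimate}; choosing them larger still, using the local uniform convergence on $\{|z|\le R'\}$ for suitable $R'$ together with the global sup bound just obtained, gives \eqref{qc_map_estimate2}.

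The main obstacle is making the two different ``closeness'' statements cohere: \eqref{qc_map_estimate2} is a global sup-norm bound while the compactness argument only gives local uniform convergence, and \eqref{qc_map_estimate} is a quantitative decay near $\infty$ that compactness does not see at all. The resolution is that both follow from the single quantitative fact \eqref{normalization_equation_2} — exponential decay of $\int_{\mathrm{supp}(\mu)\cap\{|z|>n\}}|z|^{-2}\,\mathrm dA$ — fed through Dyn'kin's theorem \cite{MR1466801}, which controls the full asymptotic expansion of $\phi$ with constants uniform in the parameters (as already noted in the proof of Proposition \ref{normalization}). So in the write-up the real work is to extract from \cite{MR1466801} the two explicit consequences: (i) $|a|=O(e^{-c\rho})$ where $\rho=\rho(\lambda,\boldsymbol m)\to\infty$, and (ii) $\|\phi-\mathrm{id}\|_{L^\infty(\mathbb C)}=O(e^{-c\rho})$; everything else is then immediate. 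I would present the argument in that order: invoke Proposition \ref{small_dilatation} and Section 17 of \cite{Bis15} to get $\mathrm{supp}(\mu)\subset\{|z|>\rho\}$ with $\rho\to\infty$ as the parameters grow, then invoke Dyn'kin to convert this into (i) and (ii), then choose $\lambda_0$, $\boldsymbol m_0$ so that the resulting bounds beat $C/R$ and $\varepsilon$.
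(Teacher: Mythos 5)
Your overall strategy --- a normal-families contradiction argument, feeding the vanishing of the limiting dilatation through Theorem \ref{Lehto-Virtanen}, and then invoking the uniform asymptotic expansion of Proposition \ref{normalization} to handle behaviour near infinity --- is the same as the paper's. However, there is a genuine error in one of your intermediate claims, and it infects the quantitative part of your argument. You assert that ``$\mathrm{supp}(\mu_k)\subset\{|z|>\rho_k\}$ with $\rho_k\to\infty$.'' This is false. The dilatation $\mu=g_{\overline z}/g_z$ is supported in a thin neighbourhood of the (unbounded, connected) graph $T$ together with the annuli $\{s<|z-z_n|<1\}$ inside the discs $D_n$; the graph $T$ passes through every disc $D(0,R)$ with $R\geq 1$, so $\mathrm{supp}(\mu)$ meets every such disc no matter how large $\lambda$ and $\boldsymbol m$ are taken. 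Proposition \ref{small_dilatation} only pushes the support \emph{within each $D_n$} into a thin annulus near $\partial D_n$; it does not evacuate any fixed bounded region. What is actually true --- and what the paper uses --- is that as $\lambda,\boldsymbol m\to\infty$ the support shrinks in area measure near every fixed point, so that $\mu_k\to 0$ \emph{almost everywhere}, and that the integral $\int_{\mathrm{supp}(\mu)\cap\{|z|>n\}}|z|^{-2}\,\mathrm dA$ is $O(e^{-cn})$, which is precisely \eqref{normalization_equation_2}. Neither of these says the support lies in $\{|z|>\rho_k\}$.

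Consequently, your claimed bounds $|a_k|=O(e^{-c\rho_k})$ and $\|\phi_k-\mathrm{id}\|_{L^\infty}=O(e^{-c\rho_k})$ are unjustified, since there is no such $\rho_k$. The paper avoids any explicit rate: after the normal-families step gives $\phi_{n_j}\to\mathrm{id}$, it uses the fact (established at the end of the proof of Proposition \ref{normalization}) that the constants in the $O(1/|z|^2)$ term of \eqref{normalization_equation} are uniform over all admissible parameters. This uniformity promotes the convergence $\phi_{n_j}\to\mathrm{id}$ on compacta to uniform convergence of $z\mapsto z(\phi_{n_j}(z)-z)$ on all of $\hat{\mathbb C}$ in the spherical metric: the tail $|z(\phi_{n_j}(z)-z)-a_{n_j}|\leq M/|z|$ is uniformly small for $|z|$ large, local uniform convergence forces $a_{n_j}\to 0$, and together these give $\sup_{\mathbb C}|z(\phi_{n_j}(z)-z)|\to 0$, directly contradicting the assumed failure of \eqref{qc_map_estimate}. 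Your instinct to derive both \eqref{qc_map_estimate} and \eqref{qc_map_estimate2} from the single fact \eqref{normalization_equation_2}, via Dyn'kin, is the right one; but the mechanism is the \emph{uniformity} of the $O(1/|z|^2)$ constant across the parameter family rather than an exponential rate in some radius past which the dilatation vanishes. If you replace your claim by the correct ones --- $\mu_k\to 0$ a.e.\ and uniformity of the expansion constants --- the rest of your argument goes through and essentially reproduces the paper's proof.
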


\begin{proof} This is a consequence of the normality of a family of $K$-quasiconformal mappings with the fixed normalization (\ref{hydrodynamical}) (see Section II.5 of \cite{MR0344463}). Indeed, suppose by way of contradiction that there exist $C>0$, $R>1$ such that there exist a sequence of arbitrarily large choices of  $\lambda\in\mathbb{R}$, $\boldsymbol{m}\in\mathbb{N}^{\mathbb{N}}$ so that (\ref{qc_map_estimate}) fails, where we can assume that each $\lambda>\lambda_0$, $\boldsymbol{m}>\boldsymbol{m_0}$ for $\lambda_0$, $\boldsymbol{m_0}$ as in Proposition \ref{normalization}. Label the associated quasiconformal mappings satisfying (\ref{hydrodynamical}) as $\phi_n$ (we are using Proposition \ref{normalization} to justify that each $\phi_n$ may be normalized to satisfy (\ref{hydrodynamical})). By normality there is a quasiconformal mapping $\psi:\mathbb{C}\rightarrow\mathbb{C}$ and a subsequence $\phi_{n_j}\rightarrow\psi$ uniformly on compact subsets of $\hat{\mathbb{C}}$ (in the spherical metric). Since $(\phi_{n_j})_{\overline{z}} \rightarrow 0$ a.e. as $j\rightarrow\infty$ (this follows from the extension of $g$ as in \cite{Bis15}), it follows from Theorem \ref{Lehto-Virtanen} that $\psi_{\overline{z}}/\psi_{z}=0$ a.e., and hence $\psi\equiv\textrm{identity}$. Thus $z(\phi_{n_j}(z)-z) \rightarrow 0$ uniformly on $\hat{\mathbb{C}}$ (in the spherical metric), contradicting the failure of (\ref{qc_map_estimate}) for all the maps $\phi_{n_j}$. A similar normal family argument ensures (\ref{qc_map_estimate2}).


\end{proof}

\begin{definition}\label{s_definition} Let $\lambda_0$, $\boldsymbol{m}_0$ be as given in Proposition \ref{close_to_id} for $\varepsilon=\varepsilon_0:=1/32$, and $C=R=1$. We call the parameters $\lambda$, $\boldsymbol{m}$ \emph{permissible} if $\lambda>\lambda_0$ and $\boldsymbol{m}(n)>\boldsymbol{m_0}(n)$, for all $n\in\mathbb{N}$.
\end{definition}

\begin{rem} Permissible parameters $\lambda$, $\boldsymbol{\delta}$, $\boldsymbol{m}$, $\boldsymbol{w}$ determine a quasiregular function $g$ via (\ref{quasiregular_definition}) and Theorem \ref{g_extension} such that $\phi^{-1}$ satisfies (\ref{qc_map_estimate}) and (\ref{qc_map_estimate2}) with $C=R=1$ and $\varepsilon=1/32$, where $\phi^{-1}$ is a quasiconformal map normalized as in (\ref{hydrodynamical}) such that $g\circ\phi^{-1}$ is holomorphic.

\end{rem}

To summarize our discussions up until this point, we now have a family of quasiregular functions $g:\mathbb{C}\rightarrow\mathbb{C}$ given as in (\ref{quasiregular_definition}) and Theorem \ref{g_extension} depending on parameters  $\lambda$, $\boldsymbol{\delta}$, $\boldsymbol{m}$, $\boldsymbol{w}$. If, moreover, these parameters are permissible, we then have estimates on the straightening map of $g$ as in Theorem \ref{MRT}. All subsequent considerations will be to the purpose of showing that there is some choice of these parameters for which the associated entire function $f:=g\circ\phi^{-1}$ is as in Theorem \ref{mainthm}.

\begin{definition}\label{mu_definition} Define a sequence of real numbers $(\mu_n)_{n=1}^{\infty} \rightarrow0$ as $n\rightarrow\infty$ such that any quasiconformal homeomorphism $\phi$ satisfying (\ref{qc_map_estimate}) and (\ref{qc_map_estimate2}) satisfies:

\begin{align} \phi( D(z_n, 1-2\mu_n)  ) \subset D(z_n, 1-\mu_n)\textrm{ and }  \phi(  \mathbb{C}\setminus D(z_n, 1+2\mu_n)) \subset   \mathbb{C}\setminus D(z_n, 1+\mu_n). 
\end{align}

\noindent We will also use the notation 

\begin{align} D^{+}_{n}:=D(z_n, 1+\mu_n)\textrm{,    }   D^{++}_{n}:=D(z_n, 1+2\mu_n),  \nonumber
\\ D^{-}_{n}:=D(z_n, 1-\mu_n)\textrm{,     } D^{--}_{n}:=D(z_n, 1-2\mu_n). \nonumber \end{align}

\end{definition}

\begin{rem} We will henceforth begin considering the local inverse map $g^{-1}$, which we will always define in a neighborhood of $g(x)=y$ with $x,y>0$ and so that $g^{-1}(y)=x$. There are no positive critical points of $g$ by (\ref{quasiregular_definition}) so that this inverse is always well defined, at least locally near $y$. The same remarks apply to the local inverse $f^{-1}$.

\end{rem}

\begin{prop}\label{prop_on_derivative_of_f} Let $n\geq1$, and suppose that $g'(x)\geq2$ for $x\geq1/32$ and that $\lambda$, $\boldsymbol{\delta}$, $\boldsymbol{m}$, $\boldsymbol{w}$ are permissible. Assume furthermore that $\textrm{supp}(g_{\overline{z}})\cap S^+\subset\{z\in S^+: \dist(z,\partial S^+) < 1/16\}$. Then 

\begin{align}\label{derivative_of_f} {\scriptstyle \left( \frac{(1/8)^2(1/4-2\varepsilon_0)}{(3/8)^2(1/4)} \right)^n \cdot \prod_{k=1}^n (g^{-1})'(g^k(1/2)+2\varepsilon_0) \leq \left| (f^{-n})'(g^n(1/2)) \right| 
 \leq \left( \frac{(5/8)^2(1/4+2\varepsilon_0)}{(3/8)^2(1/4)\lambda} \right)^n\cdot\frac{1}{\lambda-(\varepsilon_0/\lambda^{n-2})} }.
\end{align}


\end{prop}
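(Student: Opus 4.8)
The plan is to estimate $\left|(f^{-n})'(g^n(1/2))\right|$ by decomposing the orbit of the point $1/2$ under $g$ into a product of single-step derivative factors and then controlling each factor. Since $f = g\circ\phi^{-1}$ and permissible parameters guarantee that $\phi^{-1}$ satisfies (\ref{qc_map_estimate}) and (\ref{qc_map_estimate2}) with $C=R=1$, $\varepsilon_0 = 1/32$, the idea is that $f^{-1}$ is the composition $\phi\circ g^{-1}$, so $(f^{-n})' $ telescopes as a product of $n$ terms each of the form $\phi'\cdot(g^{-1})'$ evaluated along the (perturbed) orbit. First I would set up the orbit: writing $y_k := g^k(1/2)$, note that by the assumption $g'(x)\geq 2$ for $x\geq 1/32$, the forward orbit $(y_k)$ stays on the positive real axis and escapes, so the local inverse $g^{-1}$ is well defined near each $y_k$ (as noted in the Remark preceding the Proposition). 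The factor-by-factor estimate then needs two ingredients: (i) a bound on $|(g^{-1})'|$ near each $y_k$, and (ii) a bound on $|\phi'|$ near the relevant points, which will produce the Koebe-type ratios $\big((1/8)^2(1/4-2\varepsilon_0)\big)\big/\big((3/8)^2(1/4)\big)$ and $\big((5/8)^2(1/4+2\varepsilon_0)\big)\big/\big((3/8)^2(1/4)\big)$ appearing in the statement.

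For ingredient (ii) the plan is to invoke Koebe distortion (Theorem \ref{thm:Koebe}(b)) applied to $\phi$ on a disk of radius $1/4$ around a suitable center, together with the fact that $|\phi(z)-z| < \varepsilon_0$ everywhere and $|\phi(z)-z|<1/|z|$ for $|z|\geq 1$. Concretely, on a disk $D(a,1/4)$ one has by part (b) of Koebe, with $|z-a|$ of size roughly $1/8$, lower and upper bounds $\frac{(1/4)^2|z-a|}{(1/4+|z-a|)^2}\leq \frac{|\phi(z)-\phi(a)|}{|\phi'(a)|}\leq \frac{(1/4)^2|z-a|}{(1/4-|z-a|)^2}$, and combining this with $|\phi(z)-z|<2\varepsilon_0$ (so $|\phi(z)-\phi(a)|$ is $|z-a|\pm 2\varepsilon_0$) and rearranging yields bounds on $|\phi'(a)|$ of the form $\frac{(1/8)^2(1/4-2\varepsilon_0)}{(3/8)^2(1/4)}\leq|\phi'(a)| $ on one side and $|\phi'(a)|\leq \frac{(5/8)^2(1/4+2\varepsilon_0)}{(3/8)^2(1/4)}$ on the other, where the $3/8$ and $5/8$ come from the extreme values $1/4\pm1/8$ of $1/4\pm|z-a|$. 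Raising to the $n$-th power accounts for the $n$ inverse-branch steps, and the remaining factor $\frac1{\lambda - \varepsilon_0/\lambda^{n-2}}$ arises from estimating $(g^{-1})'$ at the last step — where $g$ behaves like $\sigma(\lambda\sinh z) \approx$ something with derivative of size $\lambda$ — using the explicit form of $g$ on $S^+$ and the closeness of $\phi^{-1}$ to the identity; the extra $\lambda^{-n}$ in the upper bound likewise comes from the expansion factor $\lambda$ at each of the $n$ steps along $S^+$.

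I would organize the argument inductively on $n$: the base case $n=1$ is a direct application of the Koebe estimate for $\phi$ near $g(1/2)$ together with the chain rule $(f^{-1})'(g(1/2)) = \phi'(g^{-1}(g(1/2)+\text{error}))\cdot (g^{-1})'(g(1/2)+\text{error})$, using (\ref{qc_map_estimate2}) to locate $g^{-1}$ of the perturbed point inside the disk $D(\cdot,1/4)$; the parameter $2\varepsilon_0$ in the arguments $g^k(1/2)+2\varepsilon_0$ records the accumulated discrepancy between $g^k(1/2)$ and $f^k(1/2)$. The inductive step multiplies the estimate for $n-1$ by the single-step estimate, with care that the orbit point at step $k$ differs from $g^k(1/2)$ by at most $2\varepsilon_0$ (this is why the $(g^{-1})'$ factors in the lower bound are evaluated at $g^k(1/2)+2\varepsilon_0$ rather than at $g^k(1/2)$), using monotonicity of $(g^{-1})'$ on the positive axis coming from $g'\geq 2$ and the convexity/monotonicity of the relevant maps; the support condition $\textrm{supp}(g_{\overline z})\cap S^+\subset\{z: \dist(z,\partial S^+)<1/16\}$ guarantees the orbit stays in a region where $f$ and $g$ agree up to the controlled correction $\phi^{-1}$.

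The main obstacle I anticipate is bookkeeping the discrepancy between the $g$-orbit of $1/2$ and the $f$-orbit: since $f = g\circ\phi^{-1}$ and $\phi^{-1}$ moves points by up to $\varepsilon_0$, the actual orbit points $f^k(1/2)$ drift away from $g^k(1/2)$, and one must show this drift stays bounded by $2\varepsilon_0$ (rather than compounding) along the whole orbit — this requires exploiting the expansion $g'\geq 2$ together with the decay $|\phi(z)-z|<1/|z|$ for large $|z|$ to absorb errors, and is the delicate quantitative heart of the estimate. The rest — the Koebe distortion computations and the explicit derivative bound for $\sigma(\lambda\sinh z)$ on $S^+$ giving the $\lambda$ powers — is routine once the orbit is pinned down.
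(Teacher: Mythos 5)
Your proposal follows essentially the same route as the paper: bound $|\phi'|$ at the relevant orbit points via Koebe's distortion theorem \ref{thm:Koebe}(b), factor $(f^{-n})' = \prod(\phi'\cdot(g^{-1})')$ along the backward orbit, use the expansion $g'\geq 2$ to keep each backward iterate within $2\varepsilon_0$ of the corresponding $g^{-1}$-orbit point, and then use the explicit form of $g$ on $S^+$ for the $\lambda$-powers in the upper bound. A few details are off, though they do not change the method. First, your Koebe configuration ($r=1/4$, $|z-a|\approx 1/8$) does not reproduce the stated constants; the paper takes $r=3/8$ and $|z-a|=1/4$, which is what produces $r\pm|z-a|=5/8,\,1/8$ and $|\phi(z)-\phi(a)|\in[1/4-2\varepsilon_0,\,1/4+2\varepsilon_0]$. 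Second, the drift you worry about is not between $f^k(1/2)$ and $g^k(1/2)$ (forward orbits) but between the backward iterates $f^{-k}(g^n(1/2))$ and $g^{n-k}(1/2)$; and its non-compounding does not rely at all on the decay $|\phi(z)-z|<1/|z|$ --- only on $|\phi(z)-z|<\varepsilon_0$ combined with $g'\geq 2$ (which halves an error of $2\varepsilon_0$ under $g^{-1}$ before $\phi$ adds back at most $\varepsilon_0$). Third, the factor $\lambda^{-n}\cdot\frac{1}{\lambda-\varepsilon_0/\lambda^{n-2}}$ is extracted in the paper not as a $\lambda^{-1}$ per step but by discarding all but the innermost factor $(g^{-1})'(g^n(1/2)-2\varepsilon_0)$ and using the super-exponential growth $g'(x)\geq\lambda^2 x$ together with $g^{n-1}(1/2)\gtrsim\lambda^{n-1}$; your ``one $\lambda^{-1}$ per step'' heuristic would require more care to make rigorous, since the intermediate factors $(g^{-1})'$ do not individually decay like $\lambda^{-1}$.
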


\begin{proof} We first bound $\phi'(x)$ for $x\geq1/2-\varepsilon_0$ by using (\ref{qc_map_estimate2}) and Theorem \ref{thm:Koebe}(b) with $F=\phi$, $a=x$, $z\in\partial D(x,1/4)$, $r=3/8$:

\begin{equation}\label{bound_on_derivative_of_phi} \frac{(1/8)^2(1/4-2\varepsilon_0)}{(3/8)^2(1/4)} \leq \left| \phi'(x) \right| \leq \frac{(5/8)^2(1/4+2\varepsilon_0)}{(3/8)^2(1/4)}.
\end{equation}

\noindent Since $g'(x)\geq2$ for $x\geq1/32$, the Mean Value Theorem implies that $[x-2\varepsilon_0, x+2\varepsilon_0] \subset g([g^{-1}(x)-\varepsilon_0, g^{-1}(x)+\varepsilon_0])$.  Hence, using the fact that $g'$ is increasing (and hence $(g^{-1})'$ decreasing) for $x\geq0$, we can calculate:


\begin{equation}\label{bound_on_derivative_of_f} \left| (f^{-n})'(g^n(1/2)) \right| \geq \left( \frac{(1/8)^2(1/4-2\varepsilon_0)}{(3/8)^2(1/4)} \right)^n \cdot \prod_{k=1}^n (g^{-1})'(g^k(1/2)+2\varepsilon_0).
\end{equation}


\noindent The upper bound

\begin{equation}\label{upper_bound_on_f_inverse} \left| (f^{-n})'(g^n(1/2)) \right| 
\\ \leq \left(\frac{(5/8)^2(1/4+2\varepsilon_0)}{(3/8)^2(1/4)}\right)^n  \cdot\prod_{k=1}^n (g^{-1})'(g^k(1/2)-2\varepsilon_0) 
\end{equation}

\noindent is calculated similarly. We abbreviate $C_0:=((5/8)^2(1/4+2\varepsilon_0))/((3/8)^2(1/4))$, and calculate

\begin{align} (C_0)^n\cdot\prod_{k=1}^n (g^{-1})'(g^k(1/2)-2\varepsilon_0) = \frac{(C_0)^n}{\prod_{k=1}^n g'(g^{-1}(g^k(1/2)-2\varepsilon_0))} \leq \frac{(C_0)^n}{ g'(g^{-1}(g^n(1/2)-2\varepsilon_0))} \leq \nonumber
\\ \frac{(C_0)^n}{ g'(g^{n-1}(1/2)-\varepsilon_0)} \leq \frac{(C_0)^n}{ \lambda^2(g^{n-1}(1/2)-\varepsilon_0)} \leq \frac{(C_0)^n}{ \lambda^{n+1}(1-(\varepsilon_0/\lambda^{n-1}))}, \nonumber 
\end{align}

\noindent where we have used the formula $g'(x)=\lambda\sinh(\lambda\sinh(x))\exp(x)$, and the facts that $\sinh(x)\geq x$ and $\exp(x)\geq1$ for $x\geq0$. The last inequality follows because $g(x)>\lambda x$ for $x\geq 1/2$, which is a consequence of the Fundamental Theorem of Calculus and the fact that $g(1/2)>\lambda(1/2)$ and $g'(x)\geq\lambda^2x>\lambda$ for $x\geq1/2$ and $\lambda$ sufficiently large.


\end{proof}

\begin{rem}\label{choice_of_lambda} We will henceforth fix $\lambda=\lambda_0$ as in Proposition \ref{close_to_id}, with several extra  conditions: we assume that $\lambda_0$ is sufficiently large so that $g'(x)=\lambda\sinh(\lambda\sinh(x))\exp(x)\geq2$ for $x\geq 1/32$, and that $\lambda_0$ is sufficiently large so that $g^n(1/2)\rightarrow\infty$ as $n\rightarrow\infty$ (see Lemma 3.2 of \cite{FGJ15}). Furthermore, we assume $\lambda$ is sufficiently large so that $\textrm{supp}(g_{\overline{z}})\cap S^+\subset\{z\in S^+: \dist(z,\partial S^+) < 1/16\}$ (see the definition of $T(r_0)$ as in Theorem \ref{folding}). Lastly, we assume that $\lambda=\lambda_0$ is sufficiently large so that the right-hand side of (\ref{derivative_of_f}) tends to $0$ as $n\rightarrow\infty$. Note that the upper and lower bounds in (\ref{derivative_of_f}) are independent of permissible $\boldsymbol{\delta}$, $\boldsymbol{m}$, $\boldsymbol{w}$. Furthermore, observe that the map $g|_{S+}$ and the points $z_n$ as in (\ref{quasiregular_definition}) are now both fixed henceforth as they depend only on $\lambda$.

\end{rem}

\begin{definition} Define the sequence $(p_n)_{n=1}^{\infty}$ such that $|z_{p_n}-g^n(1/2)|$ is minimized. \end{definition}

\begin{cor}\label{dist_to_1/2} There exists $n'\in\mathbb{N}$ such that if $\boldsymbol{\delta}$, $\boldsymbol{m}$, and $\boldsymbol{w}$ are permissible, then $f^{-n}(D_{p_n})\subset D(1/2, 1/8) \subset D(0, 3/4)$ for all $n>n'$.
\end{cor}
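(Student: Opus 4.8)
The plan is to show that the diameter of $f^{-n}(D_{p_n})$ shrinks to zero (so that it certainly fits inside a small disk), and that the image $f^{-n}(D_{p_n})$ stays near $1/2$ because it contains (a point close to) $1/2$ under the forward dynamics. Recall that by the definition of $(p_n)$, the disk $D_{p_n}$ is the $D$-component whose center $z_{p_n}$ is closest to $g^n(1/2)$; since $a_k$ is close to $k\pi$ and $g^n(1/2)\to\infty$, this closest center satisfies $|z_{p_n}-g^n(1/2)|$ bounded (roughly by $\pi$, up to the fixed small errors from Section 2.2). I would first locate a point $y_n\in D_{p_n}$ with $y_n>0$ real and $|y_n-g^n(1/2)|$ bounded by a fixed constant; then $f^{-n}(y_n)$ is defined via the positive local inverse branch (see the Remark preceding Proposition \ref{prop_on_derivative_of_f}) and, comparing with $g^{-n}(g^n(1/2))=1/2$, one sees $f^{-n}(y_n)$ is within a controlled distance of $1/2$.

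The key quantitative input is Proposition \ref{prop_on_derivative_of_f}: the right-hand side of (\ref{derivative_of_f}) tends to $0$ as $n\to\infty$ (this was arranged in Remark \ref{choice_of_lambda} by taking $\lambda$ large), and these bounds are uniform over permissible $\boldsymbol{\delta},\boldsymbol{m},\boldsymbol{w}$. So $|(f^{-n})'(g^n(1/2))|\to0$ uniformly. Next I would use the Koebe distortion theorem (Theorem \ref{thm:Koebe}(c)) on a disk $D(g^n(1/2),\tilde r)$ of fixed radius $\tilde r$ (a bit bigger than $\pi$, chosen so that $D_{p_n}$ lies well inside it and so that $f^{-n}$ is univalent there — univalence of $f^{-n}$ on a fixed-size neighborhood of the positive real axis near $g^n(1/2)$ follows since the only critical values of $f$ are $\pm1$ and the points near the $w_n$'s, all of which stay away from the relevant part of the positive real axis, so one can transport the univalent branch of $g^{-n}$ across $\phi$). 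Koebe distortion then bounds $|(f^{-n})'(z)|$ on $D_{p_n}$ by a uniform constant times $|(f^{-n})'(g^n(1/2))|$, hence the diameter of $f^{-n}(D_{p_n})$ is at most $C'\cdot|(f^{-n})'(g^n(1/2))|\to0$ uniformly in the permissible parameters.

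Combining: $f^{-n}(D_{p_n})$ contains $f^{-n}(y_n)$, which lies within $O(1)\cdot|(f^{-n})'|$ of $1/2$ (Koebe again, comparing $f^{-n}(y_n)$ with $f^{-n}(g^n(1/2))$ — note $g^n(1/2)$ and $1/2$ differ from their $f$-counterparts by the $\varepsilon_0$-type errors built into (\ref{derivative_of_f}), which is exactly why the proposition is stated with the $+2\varepsilon_0$ shifts), and $f^{-n}(D_{p_n})$ has diameter $\to0$. Hence for all $n$ larger than some $n'$, $f^{-n}(D_{p_n})\subset D(1/2,1/8)\subset D(0,3/4)$, with $n'$ independent of the permissible parameters by the uniformity of all estimates. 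The main obstacle is the bookkeeping around univalence: one must check that a genuine univalent inverse branch $f^{-n}$ exists on a fixed-radius disk around $g^n(1/2)$ — i.e. that no preimage of a critical value of $f$ intrudes — so that Koebe is legitimately applicable; this is where one leans on Theorem \ref{g_extension} (the critical values are $\pm1$ and points near the $w_n$, which together with the expanding dynamics of $g|_{S^+}$ keep the positive real axis critical-value-free in the needed region) together with the closeness of $\phi$ to the identity from (\ref{qc_map_estimate2}).
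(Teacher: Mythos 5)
Your argument is essentially the same as the paper's: it combines the bound $|f^{-n}(g^n(1/2))-1/2|<2\varepsilon_0=1/16$ (coming from the proof of Proposition \ref{prop_on_derivative_of_f}) with Koebe distortion centered at $a=g^n(1/2)$ and the fact that the right-hand side of (\ref{derivative_of_f}) tends to $0$ uniformly in the permissible parameters. The only cosmetic differences are that the paper applies Theorem \ref{thm:Koebe}(b) directly with the fixed radius $r=10$ (large enough to contain $D_{p_n}$, since $|z-g^n(1/2)|\leq 3\pi/2+1$ there) rather than Theorem \ref{thm:Koebe}(c) plus a diameter estimate, and it does not make the detour through a real point $y_n\in D_{p_n}$, which is unnecessary once Koebe is applied around $g^n(1/2)$ itself.
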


\begin{proof} First note that $|f^{-n}(g^n(1/2))-1/2| < 2\varepsilon_0=1/16$ by the proof of Proposition \ref{prop_on_derivative_of_f}. We use (\ref{derivative_of_f}) and Theorem \ref{thm:Koebe}(b) with $F=f^{-n}$, $z\in D_{p_n}$, $r=10$, $a=g^n(1/2)$ to yield:

\begin{equation}\label{dist_bound} \left| f^{-n}(z) - f^{-n}(g^n(1/2)) \right| \leq \frac{10^2\left( 3\pi/2 + 1 \right)}{\left(10-\left( 3\pi/2 + 1 \right)\right)^2}  \left( \frac{(5/8)^2(1/4+2\varepsilon_0)}{(3/8)^2(1/4)\lambda} \right)^n\cdot\frac{1}{\lambda-(\varepsilon_0/\lambda^{n-2})} . \nonumber
\end{equation}

\noindent Note that the right-hand side tends to $0$ as $n\rightarrow\infty$ by the choice of $\lambda=\lambda_0$ as in Remark \ref{choice_of_lambda}, so that for large $n$ we have the conclusion of Corollary \ref{dist_to_1/2} as needed.

\end{proof}

\section{Performing Surgery to Yield a Wandering Domain}\label{final_section}

The goal of the present Section is to show that there exists some particular choice of the parameters $\boldsymbol{\delta}$, $\boldsymbol{m}$, $\boldsymbol{w}$ such that the associated entire function has a wandering domain (in the next Section we will prove the univalence statement in Theorem \ref{mainthm}). It will be necessary to first fix $\boldsymbol{m}$ in Proposition \ref{selection_of_r_m} below, before choosing $\boldsymbol{\delta}$, $\boldsymbol{w}$ in Proposition \ref{final_prop}. The purpose of the technical inequalities (\ref{eq1}) and (\ref{eq2}) of Proposition \ref{selection_of_r_m} is illustrated in Figure \ref{fig:annuli}.

\begin{figure}[!htb]
\centering
\includegraphics[width=0.6\textwidth]{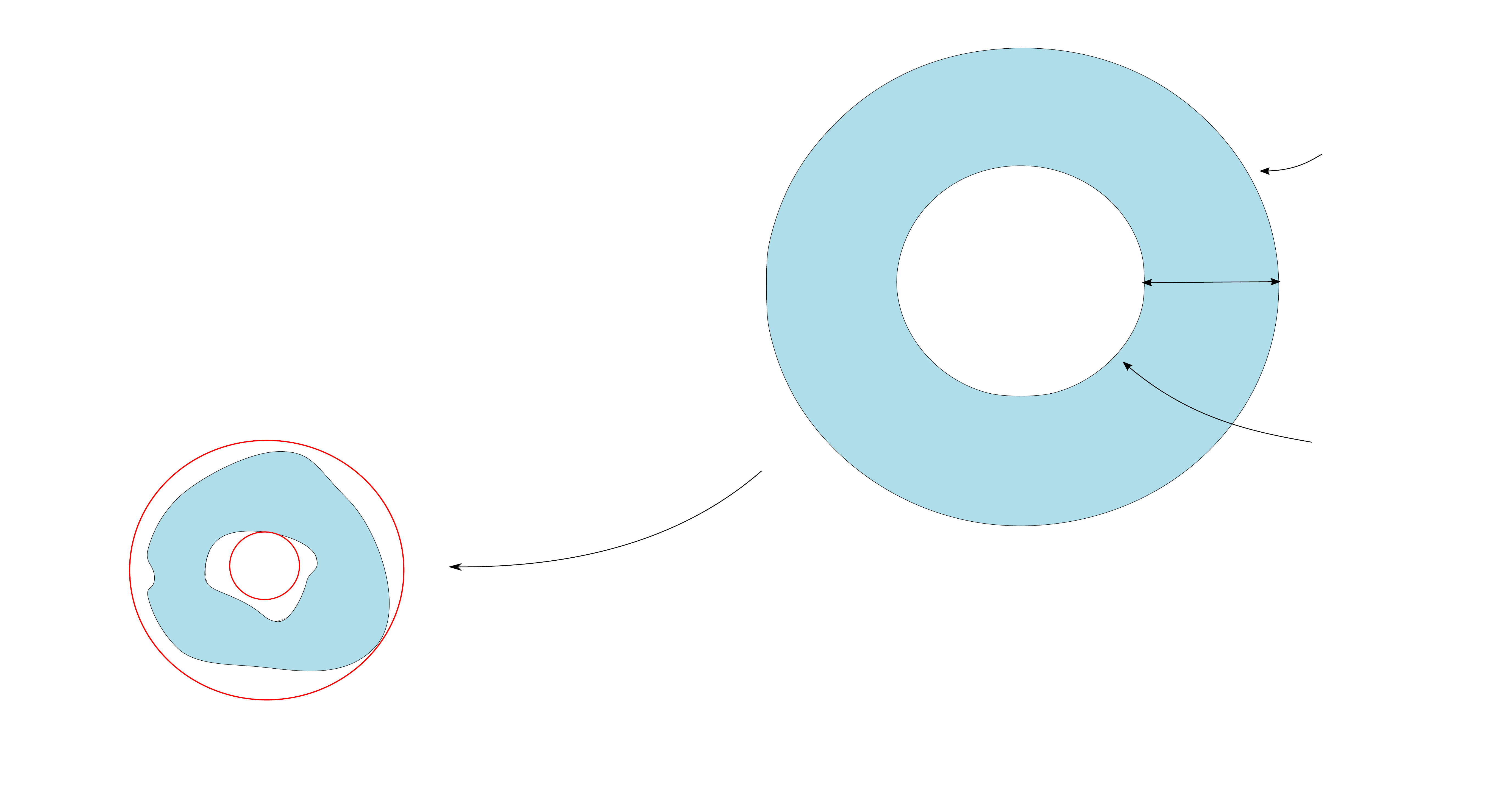}
\setlength{\unitlength}{0.6\textwidth}
\put(-0.13,0.45){\small $\partial D_{p_{n_k}}^{++}$}
\put(-0.12,0.22){\small $\partial D_{p_{n_k}}^{--}$}
\put(-0.235,0.38){\scriptsize $4 \mu_{p_{n_k}}$}
\put(-0.57,0.14){\small $f^{-n_k}$}
\caption{ \small The inner, outer Euclidean circles surrounding $f^{-n_k}(\partial D_{p_{n_k}})$ have radii corresponding to the min, max terms appearing in inequalities (\ref{eq1}), (\ref{eq2}) of Proposition \ref{selection_of_r_m}, respectively. The negative of the other term appearing on the left-hand side of inequality (\ref{eq1}) is an upper bound for the modulus of $(z-z_{p_{n_{k-1}}})^{m_{p_{n_{k-1}}}}+\delta (z-z_{p_{n_{k-1}}})$ for $z\in D_{p_{n_{k-1}}}^{-}$ and a choice of $\delta=\boldsymbol{\delta}(p_{n_{k-1}})$ that will be made in Proposition \ref{final_prop}. Thus inequality (\ref{eq1}) will be used to guarantee that $f(D_{p_{n_{k-1}}}^{-}) \subset f^{-n_k}(D_{p_{n_k}}^{--})$. Similarly, (\ref{eq2}) will be used to guarantee that the critical values of $f$ associated to $D_{p_{n_{k-1}}}$ land outside $f^{-n_k}(D_{p_{n_k}}^{++})$.  }

\label{fig:annuli}
\end{figure}

\begin{prop}\label{selection_of_r_m} There exist a subsequence $(n_k)_{k=1}^{\infty}$ of natural numbers, a sequence $(C_{n_k})_{k=1}^{\infty}$ of positive real numbers, and a permissible parameter $\boldsymbol{m}\in\mathbb{N}^{\mathbb{N}}$ such that: for any choice of permissible $\boldsymbol{w}$, $\boldsymbol{\delta}$ one has:


\begin{align}  
\min_{|z-z_{p_{n_k}}|=1-2\mu_{p_{n_k}}} \left| f^{-n_k}(z)- f^{-n_k}(z_{p_{n_k}})\right| - \left(1-\mu_{{p_{n_{k-1}}}}\right)^{m_{p_{n_{k-1}}}} \label{eq1}
 -  \phantom{Asdfasdfasds}
\\ \left( \frac{2-\mu_{{p_{n_{k-1}}}}}{2-2\mu_{{p_{n_{k-1}}}}} \right) \cdot\left|\left( f^{-n_k} \right)'(g^{n_k}(1/2))\right|\cdot\left(1-\mu_{p_{n_{k-1}}}\right) > C_{n_{k}} > 0\textrm{, }\forall k\geq2,  \phantom{Asdfasdf} \nonumber
\\ \left( \frac{2-\mu_{{p_{n_{k-1}}}}}{2-2\mu_{{p_{n_{k-1}}}}} \right)\cdot \left|\left( f^{-n_k} \right)'(g^{n_k}(1/2))\right| \cdot \left( \left( \frac{2-\mu_{{p_{n_{k-1}}}}}{2-2\mu_{{p_{n_{k-1}}}}} \right)/m_{p_{n_{k-1}}} \right)^{1/(m_{p_{n_{k-1}}}-1)}\cdot\left(\frac{m_{p_{n_{k-1}}}-1}{m_{p_{n_{k-1}}}}\right)  \label{eq2}
\\ - \max_{|z-z_{n_k}|=1+2\mu_{n_k}} \left| f^{-n_k}(z)- f^{-n_k}(z_{p_{n_k}})\right| > C_{n_k} > 0 \textrm{, }\forall k\geq2. \phantom{Asdadsfasfasdf} \nonumber
\end{align}

\noindent Furthermore, for fixed $k\geq2$, any $k_0$-quasiconformal mapping $\psi$ with dilatation supported in $1\geq|z-z_{p_{n_{k-1}}}| > 1-4\delta_0/m_{p_{n_{k-1}}}$ and normalization (\ref{hydrodynamical}) satisfies : 

\begin{align} \left| \psi(z)-z \right| < \min\left(\frac{C_{n_{l}}}{2^{k}}\right)_{l=2}^{k} \textrm{ for all } z\in\mathbb{C}. \label{eq3}
\end{align}


\end{prop}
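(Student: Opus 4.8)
The plan is to construct the subsequence $(n_k)$, the constants $C_{n_k}$, and the parameter vector $\boldsymbol{m}$ simultaneously by an inductive/diagonal procedure, using the decay estimates already established in Proposition \ref{prop_on_derivative_of_f} and Corollary \ref{dist_to_1/2}. First I would note the key asymptotics as $m\to\infty$: the term $(1-\mu_{p_{n_{k-1}}})^{m_{p_{n_{k-1}}}}\to 0$ exponentially in $m_{p_{n_{k-1}}}$ (since $0<\mu_{p_{n_{k-1}}}<1$), while the factor $\big((2-\mu)/(2-2\mu)/m\big)^{1/(m-1)}\cdot\frac{m-1}{m}\to 1$ as $m\to\infty$. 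On the other hand, by Theorem \ref{thm:Koebe}(b) applied to $F=f^{-n_k}$ on a disk of radius $10$ about $g^{n_k}(1/2)$ (exactly as in Corollary \ref{dist_to_1/2}), the quantities $\min_{|z-z_{p_{n_k}}|=1-2\mu_{p_{n_k}}}|f^{-n_k}(z)-f^{-n_k}(z_{p_{n_k}})|$ and $\max_{|z-z_{n_k}|=1+2\mu_{n_k}}|f^{-n_k}(z)-f^{-n_k}(z_{p_{n_k}})|$ are both comparable (up to universal Koebe constants bounded away from $0$ and $\infty$, since $1\pm 2\mu_{p_{n_k}}\leq 2 < 10$) to $|(f^{-n_k})'(g^{n_k}(1/2))|$. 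These comparisons are uniform in permissible $\boldsymbol{\delta}$, $\boldsymbol{w}$ because the bounds in \eqref{derivative_of_f} are.

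Granting this, the strategy is: choose the subsequence $(n_k)$ recursively. Having fixed $n_1 < \dots < n_{k-1}$ and the entries $\boldsymbol{m}(p_{n_1}),\dots$ up to index $p_{n_{k-2}}$, I would first observe that $|(f^{-n_k})'(g^{n_k}(1/2))|$ together with the Koebe-comparable min/max terms all have a definite sign pattern: in \eqref{eq1} the dominant positive contribution is the min-distance term, which is bounded below by a constant times $|(f^{-n_k})'(g^{n_k}(1/2))|$, and I must arrange that the subtracted terms $(1-\mu_{p_{n_{k-1}}})^{m_{p_{n_{k-1}}}}$ and $\big(\tfrac{2-\mu}{2-2\mu}\big)|(f^{-n_k})'(g^{n_k}(1/2))|(1-\mu_{p_{n_{k-1}}})$ are strictly smaller. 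The second subtracted term is a fixed fraction $\tfrac{(2-\mu)(1-\mu)}{2-2\mu}=\tfrac{1-\mu}{2}\cdot\tfrac{2-\mu}{1-\mu}$... more carefully it equals $\tfrac{(2-\mu_{p_{n_{k-1}}})(1-\mu_{p_{n_{k-1}}})}{2-2\mu_{p_{n_{k-1}}}}=\tfrac{2-\mu_{p_{n_{k-1}}}}{2}<1$ times $|(f^{-n_k})'(g^{n_k}(1/2))|$, so it is strictly less than the min-term's lower bound as long as the Koebe constant $\tfrac{(1/8)^2(1/4-2\varepsilon_0)}{(3/8)^2(1/4)}$-type factor appearing there beats $\tfrac{2-\mu_{p_{n_{k-1}}}}{2}$; since $\mu_{p_{n_{k-1}}}\to 0$, for $n_k$ large this fails unless I also use that the min-term involves the radius-$10$ Koebe estimate which gives a better constant. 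This is where I would be careful: rather than fight the constants, the clean route is to first fix $n_k$ large enough (depending only on $\lambda$ and on $n_{k-1}$) that $\mu_{p_{n_{k-1}}}$ is so small that the elementary inequalities among these constant factors hold with room to spare, then choose $m_{p_{n_{k-1}}}$ large enough that $(1-\mu_{p_{n_{k-1}}})^{m_{p_{n_{k-1}}}}$ is negligible compared to the resulting positive gap, and finally set $C_{n_k}$ to be a definite fraction (say one half) of that gap. The same choice of $n_k$ and $m_{p_{n_{k-1}}}$ handles \eqref{eq2}: there the positive term is $\big(\tfrac{2-\mu}{2-2\mu}\big)|(f^{-n_k})'|\cdot\big(\tfrac{2-\mu}{2-2\mu}/m\big)^{1/(m-1)}\tfrac{m-1}{m}$, whose last two factors tend to $1$ as $m_{p_{n_{k-1}}}\to\infty$, hence exceed $\tfrac12$ for $m_{p_{n_{k-1}}}$ large; the subtracted max-term is comparable to $|(f^{-n_k})'(g^{n_k}(1/2))|$ with a Koebe constant, so again choosing $n_k$ large (so $\mu_{n_k}$ small, making the max-term's constant factor close to the ``$1+2\mu$'' Koebe value) and then $m_{p_{n_{k-1}}}$ large gives a strictly positive gap, and I shrink $C_{n_k}$ if necessary to be below both gaps.

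The last requirement \eqref{eq3} is then easy: $C_{n_l}$ for $l=2,\dots,k$ are already fixed positive numbers, and any $k_0$-quasiconformal map $\psi$ with dilatation supported in the thin annulus $1\geq|z-z_{p_{n_{k-1}}}|>1-4\delta_0/m_{p_{n_{k-1}}}$ and normalization \eqref{hydrodynamical} has small $\|\psi-\mathrm{id}\|_\infty$ once the support annulus is thin enough, i.e. once $m_{p_{n_{k-1}}}$ is large enough; this follows from the same normal-family argument as in the proof of Proposition \ref{close_to_id} (a sequence of such $\psi$ with shrinking support annuli has $\psi_{\overline z}\to 0$ a.e., so by Theorem \ref{Lehto-Virtanen} the limit is the identity, and convergence is uniform on $\hat{\mathbb C}$). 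So I would simply increase $m_{p_{n_{k-1}}}$ one final time so that $\|\psi-\mathrm{id}\|_\infty < \min(C_{n_l}/2^k)_{l=2}^k$ for all such $\psi$. I expect the main obstacle to be bookkeeping the order of quantifiers correctly — $\boldsymbol{m}(p_{n_{k-1}})$ must be chosen after $n_k$ (because the estimates in \eqref{eq1}, \eqref{eq2} involve $f^{-n_k}$, which depends on all of $\boldsymbol{m}$, but by Remark \ref{changing_delta_w} and the uniformity noted in Remark \ref{choice_of_lambda} the relevant bounds on $f^{-n_k}$ are independent of the entries of $\boldsymbol{m}$ at indices $\geq p_{n_{k-1}}$, so the circularity is only apparent), and $C_{n_k}$ must be chosen after both — so the induction must be set up so that at stage $k$ one picks, in order, $n_k$, then $\boldsymbol{m}(p_{n_{k-1}})$ (and all earlier unassigned entries of $\boldsymbol{m}$, which may be set to $\boldsymbol{m}_0$), then $C_{n_k}$, while recording that none of the stage-$k$ choices disturb the inequalities already secured at earlier stages.
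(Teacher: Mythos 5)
Your overall architecture matches the paper's proof: an inductive selection (at stage $k$) of $n_k$, then $C_{n_k}$ and $\boldsymbol{m}(p_{n_{k-1}})$, using the uniformity of the bounds (\ref{derivative_of_f}) over permissible parameters to avoid circularity, the limits $(1-\mu)^m\to0$ and $\big((\cdot)/m\big)^{1/(m-1)}\tfrac{m-1}{m}\to1$ as $m\to\infty$, and a normal-family argument (via Proposition \ref{small_dilatation} and Theorem \ref{Lehto-Virtanen}) for (\ref{eq3}) by making the support annulus thin. That part is sound and is the same approach the paper takes. However, there is a genuine gap in the step where you compare the min- and max-terms with the subtracted expressions.

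You assert that the min- and max-terms are comparable to $|(f^{-n_k})'(g^{n_k}(1/2))|$ merely ``up to universal Koebe constants bounded away from $0$ and $\infty$,'' and you then try to repair the constants by ``fixing $n_k$ large enough that $\mu_{p_{n_{k-1}}}$ is so small.'' Both points are wrong. First, $\mu_{p_{n_{k-1}}}$ is determined by $n_{k-1}$ alone and does not shrink as $n_k$ grows. Second, a \emph{fixed} Koebe constant $c<1$ cannot produce a positive gap in (\ref{eq1}): since $\left(\tfrac{2-\mu}{2-2\mu}\right)(1-\mu)=\tfrac{2-\mu}{2}$, the subtracted term there equals $\tfrac{2-\mu_{p_{n_{k-1}}}}{2}\,|(f^{-n_k})'(g^{n_k}(1/2))|$ plus the $(1-\mu)^m$ correction, so one needs the min-term's lower bound to be at least $\tfrac{2-\mu_{p_{n_{k-1}}}}{2}\,|(f^{-n_k})'(g^{n_k}(1/2))|$; any fixed $c\le\tfrac{2-\mu_{p_{n_{k-1}}}}{2}$, and in particular any $c$ bounded strictly away from $1$, fails. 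The correct mechanism (the one the paper uses in (\ref{meq1})--(\ref{meq2*})) is to apply Theorem \ref{thm:Koebe}(b) and (c) to $F=f^{-n_k}$ \emph{centered at $z_{p_{n_k}}$} on a disk whose radius $\dist_{n_k}$ is the radius of conformality of the inverse branch; because $\dist_{n_k}\to\infty$ and $\mu_{p_{n_k}}\to0$ as $k\to\infty$, the resulting Koebe distortion factor itself tends to $1$. The pair $\tfrac{2-\mu_{p_{n_{k-1}}}}{2}<1<\tfrac{2-\mu_{p_{n_{k-1}}}}{2-2\mu_{p_{n_{k-1}}}}$ is a fixed open window around $1$ once $n_{k-1}$ is chosen, so one may pick $n_k$ large enough that the Koebe factors for the min- and max-terms land strictly inside it (these are the inequalities (\ref{independent_of_f1})--(\ref{independent_of_f2})), and only then send $m_{p_{n_{k-1}}}\to\infty$ to absorb the two $m$-dependent corrections. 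Your fallback to a ``radius-$10$'' Koebe bound (the one used in Corollary \ref{dist_to_1/2}, applied at $g^{n}(1/2)$) does not give distortion factors near $1$ and is not the right tool here; it is a separate application of Koebe.
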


\begin{proof} 

Define $n_1:=1$. By Theorem \ref{thm:Koebe}(b), (c), one has that for $z\in\partial D_{p_n}^{--}$,

\begin{align}\label{meq1}  \left| f^{-n}(z)- f^{-n}(z_{p_n})\right| \geq  \left|(f^{-n})'(g^n(1/2))\right| \left|\frac{(f^{-n})'(z_{p_n})}{(f^{-n})'(g^n(1/2))}\right|  \frac{\dist_n^2\cdot(1-2\mu_{p_n})}{\left( \dist_n+(1-2\mu_{p_n}) \right)^2}
\\ \label{meq1*} \geq \left|(f^{-n})'(g^n(1/2))\right|\frac{1-\left| 2\pi/\dist_n \right|}{\left(1+\left| 2\pi/\dist_n \right|\right)^3}\frac{\dist_n^2\cdot(1-2\mu_{p_n})}{\left( \dist_n+(1-2\mu_{p_n}) \right)^2},
\end{align}

\noindent where $\dist_n$ is the radius of conformality for the inverse branch $f^{-n}$ defined in a neighborhood of $z_{p_n}$. Similarly, for $z\in\partial D_{p_n}^{++}$,

\begin{align}\label{meq2} \left| f^{-n}(z)- f^{-n}(z_{p_n})\right| \leq  \left|(f^{-n})'(g^n(1/2))\right| \left|\frac{(f^{-n})'(z_{p_n})}{(f^{-n})'(g^n(1/2))}\right|  \frac{\dist_n^2\cdot(1+2\mu_{p_n})}{\left( \dist_n-(1+2\mu_{p_n}) \right)^2}
\\ \label{meq2*} \leq \left|(f^{-n})'(g^n(1/2))\right| \frac{1+\left| 2\pi/\dist_n \right|}{\left(1-\left| 2\pi/\dist_n \right|\right)^3}  \frac{\dist_n^2\cdot(1+2\mu_{p_n})}{\left( \dist_n-(1+2\mu_{p_n}) \right)^2}.
\end{align} 

\noindent The second and third terms in the products in both (\ref{meq1*}) and (\ref{meq2*}) tend to $1$ as $n\rightarrow\infty$, since $\dist_n\rightarrow\infty$ and $\mu_{p_n}\rightarrow0$ as $n\rightarrow\infty$. Thus we may choose $n_2$ such that 

\begin{align}\label{independent_of_f1} \frac{1-\left| 2\pi/\dist_n \right|}{\left(1+\left| 2\pi/\dist_n \right|\right)^3}  \frac{\dist_n^2\cdot(1-2\mu_{p_n})}{\left( \dist_n+(1-2\mu_{p_n}) \right)^2} \geq \frac{2-\mu_{p_{1}}}{2}\textrm{, and }
\\ \label{independent_of_f2}  \frac{1+\left| 2\pi/\dist_n \right|}{\left(1-\left| 2\pi/\dist_n \right|\right)^3}  \frac{\dist_n^2\cdot(1+2\mu_{p_n})}{\left( \dist_n-(1+2\mu_{p_n}) \right)^2} \leq  \frac{2-\mu_{p_{1}}}{2-2\mu_{p_{1}}},
\end{align}

\noindent with $n=n_2$. We have now chosen $n_2$, and proceed to choose $C_{n_2}$, and then $m_{p_1}$. Using (\ref{independent_of_f1}), we note that

\begin{align}\label{something1} {\scriptstyle \left|(f^{-n_2})'(g^{n_2}(1/2))\right|\bigg(  \frac{1-\left| 2\pi/\dist_{n_2} \right|}{\left(1+\left| 2\pi/\dist_{n_2} \right|\right)^3}   \frac{\dist_{n_2}^2\cdot(1-2\mu_{p_{n_2}})}{\left( \dist_{n_2}+(1-2\mu_{p_{n_2}}) \right)^2} - \left( \frac{2-\mu_{{p_{{1}}}}}{2} \right)\bigg) - \left(1-\mu_{{p_{1}}}\right)^{m_{{p_1}}} } 
\\ {\scriptstyle \geq c_{n_2}\bigg( \frac{1-\left| 2\pi/\dist_{n_2} \right|}{\left(1+\left| 2\pi/\dist_{n_2} \right|\right)^3}   \frac{\dist_{n_2}^2\cdot(1-2\mu_{p_{n_2}})}{\left( \dist_{n_2}+(1-2\mu_{p_{n_2}}) \right)^2} - \left( \frac{2-\mu_{{p_{{1}}}}}{2} \right)\bigg) - \left(1-\mu_{{p_{1}}}\right)^{m_{{p_1}}}} \xrightarrow{m_{p_1}\rightarrow\infty} \nonumber \phantom{as}
\\  {\scriptstyle  \nonumber c_{n_2}\bigg( \frac{1-\left| 2\pi/\dist_{n_2} \right|}{\left(1+\left| 2\pi/\dist_{n_2} \right|\right)^3}  \frac{\dist_{n_2}^2\cdot(1-2\mu_{p_{n_2}})}{\left( \dist_{n_2}+(1-2\mu_{p_{n_2}}) \right)^2} -  \left( \frac{2-\mu_{{p_{{1}}}}}{2} \right) \bigg) > 0, \phantom{asasdfsdasdf} }
\end{align}

\noindent where $c_{n_2}$ is a lower bound for $\left|(f^{-n_2})'(g^{n_2}(1/2))\right|$ as given by (\ref{derivative_of_f}). Moreover, by (\ref{independent_of_f2}), we see that:

\begin{align}\label{something2} {\scriptstyle \left|(f^{-n_2})'(g^{n_2}(1/2))\right|\bigg( \left( \frac{2-\mu_{{p_{1}}}}{2-2\mu_{{p_{1}}}} \right)\cdot \left( \left( \frac{2-\mu_{{p_{1}}}}{2-2\mu_{{p_{1}}}} \right)/m_{p_1} \right)^{1/(m_{p_1}-1)}\cdot\left(\frac{m_{p_1}-1}{m_{p_1}}\right) -  \frac{1+\left| 2\pi/\dist_{n_2} \right|}{\left(1-\left| 2\pi/\dist_{n_2} \right|\right)^3} \frac{\dist_{n_2}^2\cdot(1+2\mu_{p_{n_2}})}{\left( \dist_{n_2}-(1+2\mu_{p_{n_2}}) \right)^2} \bigg) }  
\\ {\scriptstyle \xrightarrow{ m_{p_1}\rightarrow\infty }   \left|(f^{-n_2})'(g^{n_2}(1/2))\right|\bigg(   \left( \frac{2-\mu_{{p_{1}}}}{2-2\mu_{{p_{1}}}} \right) - \frac{1+\left| 2\pi/\dist_{n_2} \right|}{\left(1-\left| 2\pi/\dist_{n_2} \right|\right)^3}  \frac{\dist_{n_2}^2\cdot(1+2\mu_{p_{n_2}})}{\left( \dist_{n_2}-(1+2\mu_{p_{n_2}}) \right)^2} \bigg) }  \nonumber \phantom{asdfsasd}
\\ {\scriptstyle \geq c_{n_2} \bigg(   \left( \frac{2-\mu_{{p_{1}}}}{2-2\mu_{{p_{1}}}} \right) -  \frac{1+\left| 2\pi/\dist_{n_2} \right|}{\left(1-\left| 2\pi/\dist_{n_2} \right|\right)^3} \frac{\dist_{n_2}^2\cdot(1+2\mu_{p_{n_2}})}{\left( \dist_{n_2}-(1+2\mu_{p_{n_2}}) \right)^2} \bigg) > 0. } \phantom{asasdssdsdfsasd} \nonumber
\end{align}

\noindent Thus we define $2C_{n_2}$ to be the minimum of the bottom expressions in (\ref{something1}) and (\ref{something2}),  so that (\ref{eq1}) and (\ref{eq2}) holds for $k=2$ and all sufficiently large $m_{p_1}$. The inequality (\ref{eq3}) holds for sufficiently large $m_{p_1}$ by Proposition \ref{small_dilatation} and a normal family argument similar to the proof of Proposition \ref{normalization}. 

To summarize, we first fixed $n_2$, and then fixed $C_{n_2}$ depending on our choice of $n_2$, and then fixed $m_{p_1}$ depending on our choices of $n_2$, $C_{n_2}$. We now proceed iteratively to define $n_3$ so that (\ref{independent_of_f1}) and (\ref{independent_of_f2}) are satisfied with $n=n_3$ and the right hand-side of (\ref{independent_of_f1}) replaced with $(2-\mu_{p_{n_2}})/2$, and the right-hand side of (\ref{independent_of_f2}) replaced with $(2-\mu_{p_{n_2}})/(2-2\mu_{p_{n_2}})$. A completely analogous argument to the one given in the above paragraphs guarantees $C_{n_3}$ and $m_{p_{n_2}}$ so that (\ref{eq1}), (\ref{eq2}), (\ref{eq3}) hold for $k=3$. This describes an inductive procedure which defines the sequences $(n_k)$, $(C_{n_k})$, $(m_{p_{n_k}})$ for which (\ref{eq1}), (\ref{eq2}), (\ref{eq3}) hold for all $k\geq2$ and any permissible $\boldsymbol{\delta}$, $\boldsymbol{w}$ provided that $(m_{p_{n_k}})$ extends to a permissible $\boldsymbol{m}$, which we can ensure by defining  $\boldsymbol{m}(l)=\boldsymbol{m}_0(l)$ for $l\in\mathbb{N}\setminus(p_{n_k})_{k=1}^{\infty}$.

\end{proof}

\begin{rem} We henceforth fix $\boldsymbol{m}$ as in Proposition \ref{selection_of_r_m}, and continue to use the sequences $(n_k)_{k=1}^{\infty}$, $(C_{n_k})_{k=1}^{\infty}$ as defined in Proposition \ref{selection_of_r_m}. 

\end{rem}

\begin{prop}\label{final_prop} There exist permissible $\boldsymbol{w}$, $\boldsymbol{\delta}$ such that $f:=g\circ\phi^{-1}$ has a wandering component containing $\phi(D_{p_1}^-)$. 
\end{prop}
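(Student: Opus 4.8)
The plan is to choose the parameters $\boldsymbol{\delta}$, $\boldsymbol{w}$ so that the sets $\phi(D^-_{p_{n_k}})$ form an orbit of open sets under $f$, each mapped compactly into the next, never hitting a critical value, so that by Montel the union of the associated grand orbit lies in a Fatou component which is wandering. Concretely, at step $k$ the dynamically relevant map is $f = g\circ\phi^{-1}$, and on $D_{p_{n_{k-1}}}$ we have $g(z) = \rho_{w_{p_{n_{k-1}}}}\circ\psi_{\delta_{p_{n_{k-1}}},m_{p_{n_{k-1}}}}(z-z_{p_{n_{k-1}}})$. The idea is: (i) apply $\psi_{\delta,m}$ near the center; since $\psi_{\delta,m}(\zeta)=\zeta^m+\delta\zeta\eta(\zeta)$, for $\zeta\in D^-_{p_{n_{k-1}}}$ translated to the disk of radius $1-\mu_{p_{n_{k-1}}}$ one has $|\psi_{\delta,m}(\zeta)|\le (1-\mu_{p_{n_{k-1}}})^m + \delta(1-\mu_{p_{n_{k-1}}})$, which is the quantity whose negative appears on the left of (\ref{eq1}); (ii) then $\rho_w$ translates this by roughly $w$; and (iii) then $\phi^{-1}\circ f^{-n_k+1}$ (really the inverse branch $f^{-n_k}$ composed back up) maps $D_{p_{n_k}}$-scale sets. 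The point is to pick $\delta = \boldsymbol{\delta}(p_{n_{k-1}})$ so that the image of $D^-_{p_{n_{k-1}}}$ under $f$ lands inside $f^{-n_k}(D^{--}_{p_{n_k}})$, and to pick $w = \boldsymbol{w}(p_{n_{k-1}})$ so that the critical value of $g|_{D_{p_{n_{k-1}}}}$ — which by Theorem \ref{g_extension} is $w_{p_{n_{k-1}}} + \delta_{p_{n_{k-1}}}(-\delta_{p_{n_{k-1}}}/m_{p_{n_{k-1}}})^{1/(m_{p_{n_{k-1}}}-1)}(m_{p_{n_{k-1}}}-1)/m_{p_{n_{k-1}}}$ — maps under $\phi^{-1}$ and the inverse branches to a point outside $f^{-n_k}(D^{++}_{p_{n_k}})$ but still inside $f^{-n_k}(D_{p_{n_k}})$, i.e. in the thin annulus trapping the boundary.

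First I would set up the inductive/simultaneous choice: since $\boldsymbol{m}$ is already fixed by Proposition \ref{selection_of_r_m}, the parameters $\delta_{p_{n_{k-1}}}$, $w_{p_{n_{k-1}}}$ only enter $g$ on $D_{p_{n_{k-1}}}$ (by Remark \ref{changing_delta_w}), and moreover by (\ref{eq3}) the correction $\phi$ is altered by less than $\min_{2\le l\le k}C_{n_l}/2^k$ when we turn on $\delta_{p_{n_{k-1}}}$. So I would choose the $\delta_{p_{n_{k-1}}}$ and $w_{p_{n_{k-1}}}$ one index at a time (say in order of increasing $k$), at each stage using the estimates (\ref{eq1}) and (\ref{eq2}), together with Corollary \ref{dist_to_1/2} (which puts $f^{-n_k}(D_{p_{n_k}})$ inside $D(1/2,1/8)\subset D(0,3/4)$ so the relevant $\phi^{-1}$-image lands where $g$ equals the explicit disk map) and Proposition \ref{close_to_id} ($|\phi-z|<\varepsilon_0=1/32$) to control distortion. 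The geometric content of (\ref{eq1}) is exactly that the outer radius of $\psi_{\delta,m}(D^-_{p_{n_{k-1}}})$-image, after post-composition, stays below $\min_{|z-z_{p_{n_k}}|=1-2\mu_{p_{n_k}}}|f^{-n_k}(z)-f^{-n_k}(z_{p_{n_k}})|$ minus a slack $C_{n_k}$, so $f(\phi(D^-_{p_{n_{k-1}}}))\subset \phi(f^{-n_k}(D^{--}_{p_{n_k}}))\subset\phi(D^{--}_{p_{n_k}})$... more precisely $\subset\phi(f^{-n_k}(D^{--}_{p_{n_k}}))$ and then $f^{n_k}$ of that is inside $D_{p_{n_k}}$; iterating $n_k$ more times under $f$ we return to the $D_{p_{n_k}}$ picture. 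Meanwhile (\ref{eq2}) says the critical value, scaled by $|(f^{-n_k})'(g^{n_k}(1/2))|$, exceeds $\max_{|z-z_{p_{n_k}}|=1+2\mu_{p_{n_k}}}|f^{-n_k}(z)-f^{-n_k}(z_{p_{n_k}})|$ by $C_{n_k}$, so the singular value associated to $D_{p_{n_{k-1}}}$ lands outside $f^{-n_k}(D^{++}_{p_{n_k}})$, hence its entire subsequent orbit avoids the chain $\{D^-_{p_{n_j}}\}$. One must also verify the singular value stays inside $f^{-n_k}(D_{p_{n_k}})$ (it is trapped in the thin annulus of modulus $\to 0$, which also forces the wandering-domain boundary into that annulus — this is the "trapping" mechanism advertised in the introduction), so that its orbit is captured rather than escaping into uncharted regions; this uses that the whole disk $D_{p_{n_{k-1}}}$ maps into $D_{p_{n_k}}$-scale by a comparison of the inner radius with $|\psi_{\delta,m}|$ on all of $\mathbb D$ plus $|w|\le 3/4$.

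Once the parameters are fixed, I would conclude as follows. Let $W$ be the Fatou component of $f$ containing $\phi(D^-_{p_1})$. The chain of inclusions $f^{n_k-n_{k-1}}(\ldots)$ — built from "$f$ maps $\phi(D^-_{p_{n_{k-1}}})$ compactly inside $\phi(f^{-n_k}(D^{--}_{p_{n_k}}))$" followed by "$f^{n_k}$ maps $\phi(f^{-n_k}(D^{--}_{p_{n_k}}))$ into $\phi(D^{--}_{p_{n_k}})\subset\phi(D^-_{p_{n_k}})$" (using Definition \ref{mu_definition} so that $\phi$ respects the $D^{--}\subset D^-$ nesting, and the fact that $D_{p_{n_k}}$ and its forward iterates up to time $n_k$ lie in the Fatou set since $g^n(1/2)\to\infty$ through domains where $g$ is explicit) — shows the forward orbit of $\phi(D^-_{p_1})$ returns infinitely often to the neighborhoods $\phi(D^-_{p_{n_k}})$ of the distinct points $\phi(z_{p_{n_k}})$. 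Since the $z_{p_{n_k}}$ are distinct with $z_{p_{n_k}}\to\infty$ along a subsequence while also the orbit passes near $1/2$ infinitely often (via $f^{-n_k}(D_{p_{n_k}})\subset D(1/2,1/8)$), no two forward images of $W$ coincide, so $W$ is wandering; alternatively, the orbit is non-escaping and bounded away from being eventually periodic by the two accumulation behaviors. Normality of $\{f^n\}$ on $\bigcup_k\phi(f^{-n_k}(D_{p_{n_k}}))$ follows because all these sets and their iterates avoid, say, a fixed disk around a repelling fixed point, or directly because they are contained in the bounded region $D(1/2,1/8)$ infinitely often. The main obstacle is the simultaneous bookkeeping: one must choose $\delta_{p_{n_{k-1}}}$ and $w_{p_{n_{k-1}}}$ so that \emph{both} (\ref{eq1})-type containment \emph{and} (\ref{eq2})-type critical-value-avoidance hold after applying the \emph{actual} correction $\phi$ (which itself depends on all the $\delta$'s), and this is precisely what the slack $C_{n_k}$ and the smallness estimate (\ref{eq3}) are engineered to absorb — so the proof is a careful verification that the perturbation of $\phi$ caused by later parameter choices never exceeds the room $C_{n_k}$ left in the inequalities.
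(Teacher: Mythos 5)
Your proposal identifies the correct ingredients and the correct high-level strategy: tune $(\delta,w)$ on the relevant $D$-components so that $\phi(D^-_{p_{n_{k-1}}})$ maps under $f$ into a preimage of $D^{--}_{p_{n_k}}$ while the associated critical value falls outside the preimage of $D^{++}_{p_{n_k}}$, with the slack $C_{n_k}$ and (\ref{eq3}) absorbing the resulting perturbation of the straightening map. That is the paper's route. But there are two places where the sketch, as written, has a genuine gap between ``here is what must happen'' and ``here is why it happens,'' and one small misassignment.

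First, you split the roles of $w$ and $\delta$ as though $\delta$ controls containment and $w$ controls critical-value avoidance. In the paper both choices feed into both constraints: one sets $\boldsymbol{w}(p_{n_{k-1}})=f_{k-1}^{-n_k}(z_{p_{n_k}})$ to re-center the disk image at the $n_k$-th preimage of $z_{p_{n_k}}$, and $\boldsymbol{\delta}(p_{n_{k-1}})=\frac{2-\mu_{p_{n_{k-1}}}}{2-2\mu_{p_{n_{k-1}}}}\,(f_{k-1}^{-n_k})'(g_{k-1}^{n_k}(1/2))$ to scale the linear part of $\psi_{\delta,m}$ to the size of $f^{-n_k}(D_{p_{n_k}})$. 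Inequality (\ref{eq1}) then gives the containment \emph{because the center and scale are both correct}, and (\ref{eq2}) gives critical-value avoidance for the same reason (note that the critical value in (\ref{expression_for_critical_values}) is $w$ plus a $\delta$-dependent term). Assigning one parameter per constraint is not how the inequalities in Proposition~\ref{selection_of_r_m} are structured; in particular you cannot freely choose $w$ to push the critical value around without also moving the image of $D^-_{p_{n_{k-1}}}$, so the two choices must be made jointly.

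Second, and more importantly, you flag the bookkeeping issue (``which itself depends on all the $\delta$'s'') but never actually close it. The paper's mechanism is a double induction followed by a limit. One defines $g_k$, $\phi_k$, $f_k$ with only the first $k$ parameters turned on, where each new parameter is chosen using the \emph{previous} $f_{k-1}$; then one verifies that turning on the $k$-th parameter changes $\phi$ by less than $C_{n_l}/2^k$ for all $l\le k$ (this is (\ref{eq3})), so every inclusion established at earlier stages survives with slack $C_{n_l}-\sum_{j\ge l} C_{n_l}/2^j>0$. Finally, one passes to $f_\infty=g_\infty\circ\phi_\infty^{-1}$ by observing that $(\phi_k)_{\overline z}/(\phi_k)_z\to(\phi_\infty)_{\overline z}/(\phi_\infty)_z$ a.e., hence (Theorem~\ref{Lehto-Virtanen}) $\phi_k\to\phi_\infty$ uniformly on compacts, so the open inclusions with positive slack hold for $f_\infty$. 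Without this limit step your argument never produces a \emph{single} entire function with all the desired inclusions simultaneously --- it only produces, for each $k$, an $f_k$ that does the right thing for the first $k$ stages. Lastly, the requirement you add that the critical value remain inside $f^{-n_k}(D_{p_{n_k}})$ is neither needed here nor established by (\ref{eq1})--(\ref{eq2}); the proposition only needs exclusion from $f^{-n_k}(D^{++}_{p_{n_k}})$, and the further control of the critical orbit is deferred to Section~\ref{univalence}.
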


\begin{proof}

Our strategy will be to start with the choice $\boldsymbol{w}=\boldsymbol{0}$, $\boldsymbol{\delta}=\boldsymbol{0}$, and  inductively redefine the sequences $\boldsymbol{w}(p_{n_k})$, $\boldsymbol{\delta}(p_{n_k})$ so that the the choice of parameters $\boldsymbol{w}(p_{n_k})$, $\boldsymbol{\delta}(p_{n_k})$, and $\boldsymbol{w}(l)=\boldsymbol{\delta}(l)=0$ for $l\in\mathbb{N}\setminus(p_{n_k})_{k=1}^\infty$ corresponds to a quasiregular function $g$ such that $f:=g\circ\phi^{-1}$ has a wandering domain. As already mentioned, the choices $\boldsymbol{w}=\boldsymbol{0}$, $\boldsymbol{\delta}=\boldsymbol{0}$ determine a quasiregular function $g_0$ via Theorem \ref{g_extension}, and thus a quasiconformal map $\phi_0$ via Theorem \ref{MRT} normalized as in (\ref{hydrodynamical}). Let $f_0:=g_0\circ\phi_0^{-1}$. Similarly, let $g_1$, $\phi_1$ be the functions associated to the choices 

\begin{equation}\label{first_choice} \boldsymbol{w}(p_1)=f_0^{-n_2}(z_{p_{n_2}})\textrm{,   } \boldsymbol{\delta}(p_1)=\frac{2-\mu_{p_1}}{2-2\mu_{p_1}}(f_0^{-n_2})'(g_0^{n_2}(1/2))\textrm{, and } \boldsymbol{w}(l)=\boldsymbol{\delta}(l)=0 \textrm{ for } l\not=p_1. \end{equation}

\begin{figure}[!htb]
\centering
\includegraphics[width=0.8\textwidth]{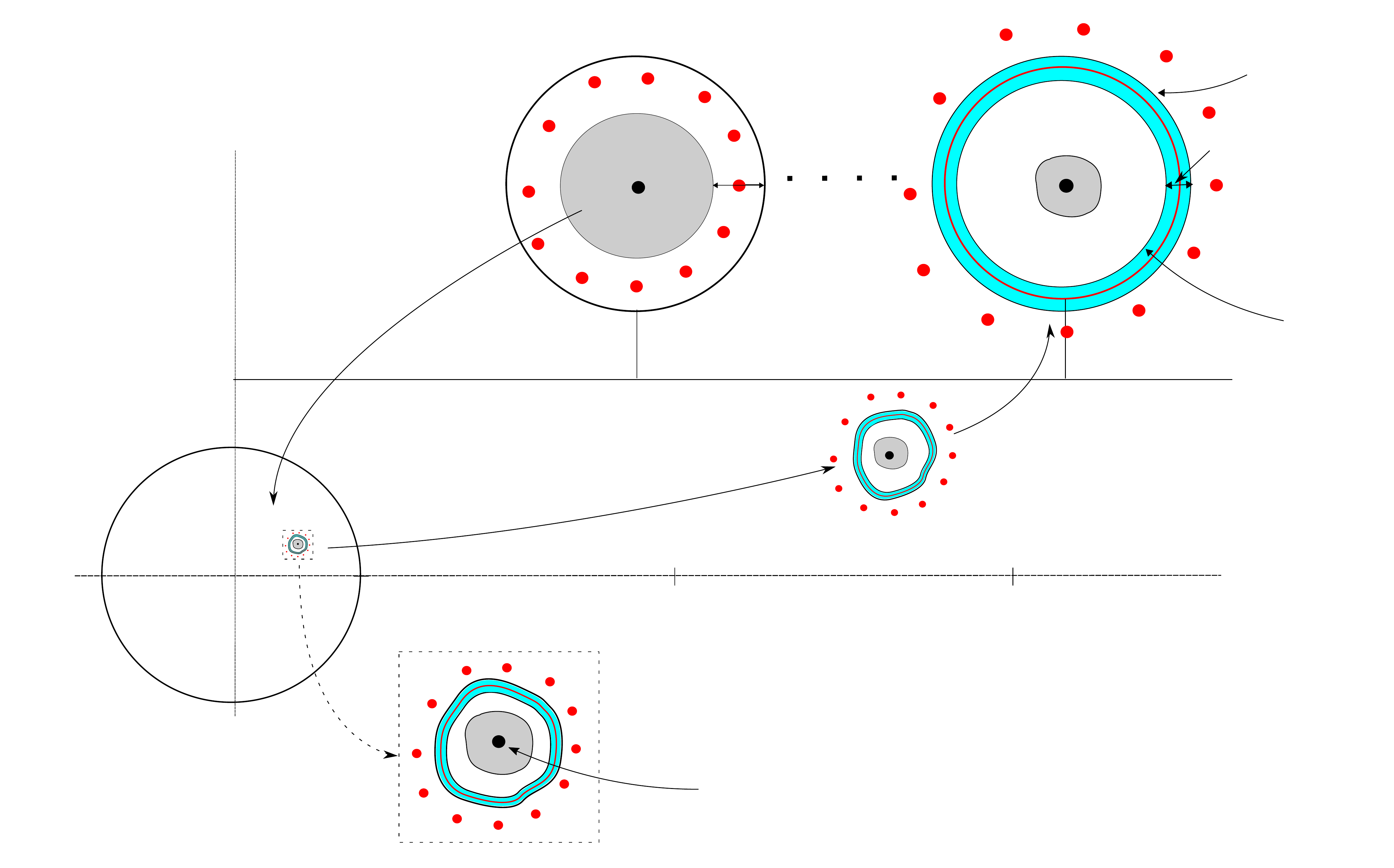}
\setlength{\unitlength}{0.8\textwidth}
\put(-0.12,0.52){\scriptsize $4\mu_{p_{n_2}}$}
\put(-0.485,0.51){\scriptsize $\mu_{p_1}$}
\put(-0.49,.05){\scriptsize $w_{p_1}:=f_0^{-n_2}(z_{p_{n_2}})$}
\put(-0.6,0.275){\footnotesize $f_1^{n_2-1}$}
\put(-0.8,0.4){\footnotesize $f_1\circ\phi_1$}
\put(-0.25,0.32){\footnotesize $f_1$}
\put(-0.56,0.51){\scriptsize $D_{p_1}^{-}$}
\put(-0.095,0.575){\scriptsize $\partial D_{p_{n_2}}^{++}$}
\put(-0.075,0.375){\scriptsize $\partial D_{p_{n_2}}^{--}$}
\caption{ \small This Figure illustrates the key relations (\ref{f_1_fits_inside}) and (\ref{critical_point_for_f_1}) for the function $f_1$, which in turn are consequences of the inequalities (\ref{eq1}) and (\ref{eq2}) of Proposition \ref{selection_of_r_m}.}
\label{fig:perfectfit}
\end{figure}

\noindent See Figure \ref{fig:perfectfit} for an illustration of the relations (\ref{f_1_fits_inside}) and (\ref{critical_point_for_f_1}) for $f_1$ we wish to establish below.  By (\ref{eq1}), $f_1(\phi_1( D_{p_{1}}^{-} )) \subset f_0^{-n_2}( D_{p_{n_2}}^{--} )$, and moreover $\dist( f_1(\phi_1( D_{p_{1}}^{-} )) ,  \partial f_0^{-n_2}( D_{p_{n_2}}^{--} ) ) > C_{n_2}$, where we are using the notation $\partial U$ to denote the boundary of a region $U$. By (\ref{eq3}), $|\phi_0\circ\phi_1^{-1}(z)-z|<C_{n_2}/4$, and thus $\dist( \phi_0\circ\phi_1^{-1} \circ f_1(\phi_1( D_{p_{1}}^{-} ) ) , \partial f_0^{-n_2}( D_{p_{n_2}}^{--} ) ) > 3C_{n_2}/4$. Thus, since $g_1(z)=g_0(z)$ for $z\not\in D_{p_1}$, we have that:

\[ f_1\left( f_1\left(\phi_1( D_{p_{1}}^{-} \right) \right) = g_1\circ\phi_0^{-1}\circ\phi_0\circ\phi_1^{-1}\left(f_1\left(\phi_1\left( D_{p_{1}}^{-} \right) \right)\right) \subset f_0^{-n_2+1}\left( D_{p_{n_2}}^{--} \right). \]

\noindent Moreover, $\dist( f_1( f_1(\phi_1( D_{p_{1}}^{-} ) ) , \partial f_0^{-n_2+1}( D_{p_{n_2}}^{--} ) ) > C_{n_2}$ by the expanding properties of $g_0$ in $S^+$. Iterating this argument and using (\ref{eq1}) and (\ref{eq3}), we see that:

\begin{equation}\label{f_1_fits_inside} f_1^{n_2}\left(\phi_1\left( D_{p_{1}}^{-} \right)  \right) \subset  D_{p_{n_2}}^{--}.  \end{equation}


Let $v$ be a critical value of $f_1$ associated to a critical point of $g_1|_{D_{p_{1}}}$.  By (\ref{expression_for_critical_values}), (\ref{eq2}), and (\ref{first_choice}), $v \not\in f_0^{-n_2}(D^{++}_{p_{n_2}})$, and moreover $\dist(v, \partial f_0^{-n_2}(D^{++}_{p_{n_2}}))>C_{n_2}$. Again, we note that $|\phi_0\circ\phi_1^{-1}(z)-z|<C_{n_2}/4$ by  (\ref{eq3}), so that $\dist(\phi_0\circ\phi_1^{-1}(v), \partial f_0^{-n_2}(D^{++}_{p_{n_2}}))>3C_{n_2}/4$, and hence:

\[ f_1(v) = g_1 \circ \phi_0^{-1}\circ\phi_0\circ\phi_1^{-1}(v) \not \in f_0^{-n_2+1}\left(D^{++}_{p_{n_2}}\right), \textrm{ and } \dist\left(  f_1(v), \partial f_0^{-n_2+1}\left(D^{++}_{p_{n_2}}\right)  \right) > C_{n_2}.   \] 

\noindent Iterating this argument, we see that 

\begin{equation}\label{critical_point_for_f_1} f_1^{n_2}(v)\not\in D^{++}_{p_{n_2}}\end{equation}

\noindent  for any critical value $v$ associated to a critical point of $g_1|_{D_{p_1}}$.

We proceed to define the quasiregular function $g_2$ as associated to the parameters $\boldsymbol{w}(p_1)$, $\boldsymbol{\delta}(p_1)$ as defined in (\ref{first_choice}), and:

\[ \boldsymbol{w}(p_{n_2})=f_1^{-n_3}(z_{p_{n_3}})\textrm{,   } \boldsymbol{\delta}(p_{n_2})=\frac{2-\mu_{p_{n_2}}}{2-2\mu_{p_{n_2}}}(f_1^{-n_3})'(g_1^{n_3}(1/2))\textrm{, and } \boldsymbol{w}(l)=\boldsymbol{\delta}(l)=0 \textrm{ for } l\not\in\{p_1, p_{n_2}\}. \] 

\noindent The functions $\phi_2$ and $f_2:=g_2\circ\phi_2^{-1}$ are defined analogously. The arguments that 

\begin{equation}\label{second_choice} f_2^{n_3}\left(\phi_2\left( D_{p_{n_2}}^{-} \right)  \right) \subset  D_{p_{n_3}}^{--}\textrm{ and } f_2^{n_3}(v)\not\in D^{++}_{p_{n_3}} \end{equation}

\noindent for any critical value $v$ of $f_2$ associated to $g_2|_{D_{p_{n_2}}}$ are identical to the arguments given for (\ref{f_1_fits_inside}) and (\ref{critical_point_for_f_1}).  We will verify that the relations (\ref{f_1_fits_inside}) and (\ref{critical_point_for_f_1}) still hold with $f_1$, $\phi_1$ replaced by $f_2$, $\phi_2$. Well $f_2(\phi_2( D_{p_{1}}^{-} )) = g_1(D_{p_{1}}^{-} )$, so that we have already verified $f_2(\phi_2( D_{p_{1}}^{-} )) \subset  f_0^{-n_2}( D_{p_{n_2}}^{--} )$ and $\dist( f_2(\phi_2( D_{p_{1}}^{-} )) ,  \partial f_0^{-n_2}( D_{p_{n_2}}^{--} ) ) > C_{n_2}$. By (\ref{eq1}), we know that $|\phi_1 \circ \phi_2^{-1}(z)-z|<C_{n_2}/8$, and thus $\dist( \phi_0\circ\phi_1^{-1} \circ \phi_1 \circ \phi_2^{-1} \circ f_2(\phi_2( D_{p_{1}}^{-} ) ) , \partial f_0^{-n_2}( D_{p_{n_2}}^{--} ) ) > 5C_{n_2}/8$. Hence:

\begin{equation} f_2 \circ f_2\left(\phi_2\left( D_{p_{1}}^{-} \right) \right) = g_2 \circ \phi_0^{-1} \circ \phi_0 \circ \phi_1^{-1} \circ \phi_1 \circ \phi_2^{-1} \left( f_2\left(\phi_2\left( D_{p_{1}}^{-} \right) \right)  \right) \subset f_0^{-n_2+1}\left( D_{p_{n_2}}^{--} \right).
\end{equation}

\noindent Moreover, $\dist( f_2( f_2(\phi_1( D_{p_{1}}^{-} ) ) , \partial f_0^{-n_2+1}( D_{p_{n_2}}^{--} ) ) > C_{n_2}$ by the expanding properties of $g_2$ in $S^+$. The argument that the relation (\ref{critical_point_for_f_1}) still holds with $f_1$ replaced by $f_2$ and $v$ any critical value of $f_2$ associated to $g_2|_{D_{p_{n_2}}}$ is proven analogously.

This procedure inductively defines the sequences $\boldsymbol{w}(p_{n_k})$, $\boldsymbol{\delta}(p_{n_k})$. For $k\geq1$, let the functions $g_k$, $\phi_k$, $f_k:=g_k\circ\phi_k^{-1}$ correspond to parameters

\begin{equation}\label{k^th_choice} \left(\boldsymbol{w}(p_{n_j})\right)_{j=1}^{k}\textrm{,   } \left(\boldsymbol{\delta}(p_{n_j})\right)_{j=1}^{k}\textrm{, and } \boldsymbol{w}(l)=\boldsymbol{\delta}(l)=0 \textrm{ for } l\not\in\left( p_{n_j} \right)_{j=1}^{k}. \end{equation}

\noindent From the above arguments it follows that

\begin{equation}\label{k^th_choice_argument} f_k^{n_{l+1}}\left(\phi_k\left( D_{p_{n_l}}^{-} \right)  \right) \subset  D_{p_{n_{l+1}}}^{--}\textrm{ and } f_k^{n_{l+1}}(v)\not\in D^{++}_{p_{n_{l+1}}} \textrm{ for } 1\leq l \leq k, \end{equation}

\noindent and any critical value $v$ of $f_l$ associated to $g_l|_{D_{p_{n_l}}}$.

Let $g_\infty$, $\phi_\infty$, $f_\infty:=g_\infty\circ\phi_\infty^{-1}$ correspond to parameters

\begin{equation}\label{last_choice} \left(\boldsymbol{w}(p_{n_j})\right)_{j=1}^{\infty}\textrm{,   } \left(\boldsymbol{\delta}(p_{n_j})\right)_{j=1}^{\infty}\textrm{, and } \boldsymbol{w}(l)=\boldsymbol{\delta}(l)=0 \textrm{ for } l\not\in\left( p_{n_j} \right)_{j=1}^{\infty}. \end{equation}

\noindent Since $(\phi_k)_{\overline{z}}/(\phi_k)_{z}\rightarrow(\phi_\infty)_{\overline{z}}/(\phi_\infty)_{z}$ a.e. (see Remark \ref{changing_delta_w}), $\phi_k\rightarrow\phi_{\infty}$ uniformly on compact subsets of $\mathbb{C}$. Thus the relations 

\begin{equation}\label{infty_choice_argument} f_\infty^{n_{l+1}}\left(\phi_\infty\left( D_{p_{n_l}}^{-} \right)  \right) \subset  D_{p_{n_{l+1}}}^{--}\textrm{ and } f_\infty^{n_{l+1}}(v)\not\in D^{++}_{p_{n_{l+1}}} \textrm{ for } 1\leq l \leq \infty, \end{equation}

\noindent and any critical value $v$ of $f_\infty$ associated to $g_\infty|_{D_{p_{n_l}}}$ follow from (\ref{k^th_choice_argument}). We claim that $f=f_\infty$ satisfies the conclusion of Proposition \ref{final_prop}, namely that $\phi_\infty\left( D_{p_{1}}^{-} \right)$ is contained in a wandering domain for $f_\infty$. Indeed, $f_\infty^{n_2}\left(\phi_\infty\left( D_{p_{1}}^{-} \right)  \right) \subset  D_{p_{n_2}}^{--}$, whence $\phi_\infty(D_{p_{n_2}}^{--})\subset D_{p_{n_2}}^{-}$ by Definition \ref{mu_definition}, so that $f_\infty^{n_2+n_3}(\phi_\infty\left( D_{p_{1}}^{-} \right)) \subset D_{p_{n_3}}^{--}$, and so forth. Thus any subsequence of $(f^{n}|_{\phi_\infty\left( D_{p_{1}}^{-} \right)})_{n=1}^{\infty}$ is either bounded, or contains a further subsequence converging to the constant function $\infty$.

\end{proof}

\section{Verifying that the Wandering Domain is indeed Univalent}\label{univalence}

In Proposition \ref{final_prop}, we showed that there was some choice of parameters $\boldsymbol{\delta}$, $\boldsymbol{w}$ such that the associated entire function $f:=g\circ\phi^{-1}$ with $\lambda$ as in Remark \ref{choice_of_lambda} and $\boldsymbol{m}$ as in Proposition \ref{selection_of_r_m} had a wandering component containing $\phi(D_{p_1}^-)$. To finish the proof of Theorem \ref{mainthm}, it then only remains to show that the forward orbit of this wandering component is univalent, which will follow once we show that this forward orbit has empty intersection with the singular set. 

\begin{prop}\label{univalence_prop} Let $n\geq 0$, and consider the function $f:=g\circ\phi^{-1}$ as given in Proposition \ref{final_prop}. The map $f$ is univalent on the Fatou component containing the domain $f^n(\phi(D_{p_1}^-))$.  

\end{prop}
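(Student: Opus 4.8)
The plan is to show that the forward orbit $\{f^n(\phi(D_{p_1}^-))\}_{n\geq 0}$ never meets the singular set $S(f)$; since a univalent image of a simply connected domain not meeting $S(f)$ pulls back univalently, and an entire function restricted to a component avoiding critical points is locally injective, this combined with the structure of the orbit will give univalence on each orbit component. By Theorem \ref{g_extension}, $S(f)$ consists only of $\pm 1$ together with the critical values $v_n$ associated to the $D$-components, namely the points $w_n+\delta_n(-\delta_n/m_n)^{1/(m_n-1)}(m_n-1)/m_n$. Since the parameters are permissible and $\boldsymbol{\delta}(l)=\boldsymbol{w}(l)=0$ for $l\notin(p_{n_j})$, most of these critical values are at the origin (inside $D(0,3/4)$); the only ``moved'' critical values are those associated to the domains $D_{p_{n_k}}$, and by construction \eqref{first_choice}, \eqref{k^th_choice} their base points $w_{p_{n_k}}=f_{k-1}^{-n_{k+1}}(z_{p_{n_{k+1}}})$ sit very close to the preimages $f^{-n_{k+1}}(z_{p_{n_{k+1}}})$.

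First I would pin down where the Fatou component $W$ containing $\phi_\infty(D_{p_1}^-)$ actually travels: by the final paragraph of the proof of Proposition \ref{final_prop}, $f^{n_2+\cdots+n_k}(\phi_\infty(D_{p_1}^-))\subset D_{p_{n_k}}^{--}$, and between these times the orbit passes through the half-strip $S^+$ where $g=\sigma\circ(\lambda\sinh)$ is injective with no critical points. So the orbit components are of two types: those lying in some $D_{p_{n_k}}^{--}$ (after applying $\phi_\infty^{-1}$, i.e. in the $g$-coordinate these are subdisks of $\mathbb{D}$ on which $\psi_{\delta,m}$ is univalent by the choice $r>(\delta/m)^{1/(m-1)}$ in Lemma \ref{interpolation_map}, which places the critical points $c_k=(-\delta/m)^{1/(m-1)}$ outside the relevant subdisk), and those lying in $S^+$. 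Second, I would check that the critical value of $f$ associated to $D_{p_{n_k}}$ — which \eqref{qc_map_estimate2}, \eqref{eq3} and \eqref{critical_point_for_f_1}/\eqref{infty_choice_argument} place outside $D^{++}_{p_{n_{k+1}}}$ but close to $f^{-n_{k+1}}(z_{p_{n_{k+1}}})$ — is NOT in the forward orbit of $W$: the orbit enters $D_{p_{n_{k+1}}}$ only through $D^{--}_{p_{n_{k+1}}}$, and by \eqref{eq2} (its whole point, per Figure \ref{fig:annuli}) the critical value lands strictly outside $D^{++}_{p_{n_{k+1}}}$, hence in particular outside every $f^j(W)$ that sits inside $D^{--}_{p_{n_{k+1}}}$; and once the orbit leaves into $S^+$ it stays away from a neighborhood of the $D_n$'s. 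Third, the static critical values at (or near) $0$ and the values $\pm 1$: the orbit components inside $S^+$ are bounded away from $[-1,1]$ and from $D(0,3/4)$ by the construction of the graph $T$ and the expansion $g(x)>\lambda x$, while the components inside $D_{p_{n_k}}^{--}$ avoid $0$ because $D_{p_{n_k}}^{--}$ is a disk of radius $\approx 1$ centered at $z_{p_{n_k}}$ and in the $\psi$-coordinate one checks $|z^m+\delta z|$ stays away from $\mathrm{supp}$ and from $0$ on the relevant annular region, as in Proposition \ref{small_dilatation}.

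The main obstacle, I expect, is the bookkeeping needed to convert ``the critical value lies outside $D^{++}_{p_{n_{k+1}}}$'' into ``the critical value lies outside \emph{every} orbit component of $W$ forever'', since in principle the grand orbit could still be complicated and one must be careful that the critical value does not re-enter $D^{--}_{p_{n_{k+1}}}$ under further iteration in a way that meets a later piece of the orbit of $W$ — but here the clean separation is that $f^n(W)$ lands in $D^{--}$ and the critical value lands in the complement of $D^{++}$, and $D^{--}\subset D^{+}\subset D^{++}$, so a uniform annular gap of modulus $\approx \mu_{p_{n_{k+1}}}$ (guaranteed by Definition \ref{mu_definition} together with \eqref{eq1}--\eqref{eq3}) keeps them apart; the remaining subtlety is simply that $f|_{f^n(W)}$ is injective, which follows since $f^n(W)$ is simply connected, contains no critical point, and $f$ maps it conformally onto the simply connected $f^{n+1}(W)$ — there is no need for any monodromy argument because $f^{n+1}(W)$ avoids $S(f)$. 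Assembling these, $f$ is univalent on each $f^n(\phi(D_{p_1}^-))$-component, proving the proposition and hence Theorem \ref{mainthm}.
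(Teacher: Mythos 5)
The high-level strategy is correct and matches the paper: show $\operatorname{Crit}(f)\cap U_n=\emptyset$ for all $n$, using the annular separation between the image $f^{n_2+\cdots+n_k}(\phi(D_{p_1}^-))\subset D^{--}_{p_{n_k}}$ and the critical value, which lands outside $D^{++}_{p_{n_k}}$ by (\ref{eq2}) and (\ref{infty_choice_argument}). However, there is a genuine gap in the middle of your argument: you implicitly conflate the set $f^{n}(\phi(D_{p_1}^-))$ with the whole Fatou component $U_n\supset f^{n}(\phi(D_{p_1}^-))$. Knowing that the explicitly constructed image lies in $D^{--}$ and the critical value lies outside $D^{++}$ does \emph{not} a priori rule out the critical value belonging to $U_n$, because you have not shown $U_n\subset D^{++}_{p_{n_k}}$ — the Fatou component could in principle extend far beyond the annular collar and swallow the critical value. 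This is precisely the point where the paper invokes a nontrivial extra argument, which is absent from your proposal: it shows $\mathbb{R}\subset J(f)$ (since $\mathbb{R}$ is in the escaping set and $f\in\mathcal{B}$), so every $f^j(U)$ lies in the upper half-plane $\mathbb{H}$; then, assuming for contradiction that both $u=\phi(z_{p_{n_2}})$ and the pushed-forward critical value $v$ lie in $U$, the Schwarz lemma gives $d_{\mathbb{H}}(f^j(u),f^j(v))\leq d_U(u,v)$ for all $j$, while the combinatorics of $T$ force $|f(u)-1/2|<1/4$, $|f(v)|>1$, after which the exponential expansion along $S^+$ makes $d_{\mathbb{H}}(f^j(u),f^j(v))$ blow up along a subsequence — a contradiction. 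Only \emph{after} this hyperbolic argument does the paper deduce (as a consequence, not an assumption) that the Fatou component is in fact contained in $D(z_{p_1},1+2\mu_{p_1})$, which is then used in the Koebe-distortion estimate handling $n\geq 2$. Without something playing the role of this containment step, your reduction from ``$v$ is outside $D^{++}$'' to ``$v\notin U_n$'' does not go through.
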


\begin{proof} We have shown that $\phi(D_{p_1}^-)$ is contained in a wandering component for the map $f$. Let $U_n$ be the Fatou component containing $f^n(\phi(D_{p_1}^-))$, and let $\textrm{Crit}(f)$ be the set of critical values of $f$. The Proposition will follow once it is shown that $\textrm{Crit}(f)\cap U_n=\emptyset$ for all $n\geq0$, since $f$ has no asymptotic values by Theorem \ref{g_extension}. 



Let $v'$ be a critical value of $f$ associated to $g|_{D_{p_1}}$. We first show that $v'\not\in U_1$. Suppose by way of contradiction that $v'\in U_1$, or equivalently that $v:=f^{n_2}(v') \in f^{n_2}(U_1)=:U$.  Note that by (\ref{infty_choice_argument}), $v \not \in D_{p_{n_2}}^{++}$ whereas $u:=\phi(z_{p_{n_2}})\in U $. Without loss of generality, we may assume that $v$ lies in an $R$-component neighboring $D_{p_{n_2}}$. We call this $R$-component $V$. Since $\mathbb{R}$ is in the escaping set of $f$ (see \cite{Laz}), and $f\in\mathcal{B}$, it follows that $\mathbb{R} \subset\mathcal{J}(f)$ (see \cite{EL92}). Thus, $f^n(U)\subset\mathbb{H}:=\{z\in\mathbb{C} : \textrm{Im}(z)>0\}$ for all $n\in\mathbb{N}$. Denote by $d_{\mathbb{H}}, d_{f^n(U)}, d_{U}$ the hyperbolic distance in $\mathbb{H}, f^n(U), U$ respectively. We have then that


\begin{equation} \label{schwarz}
d_{\mathbb{H}}(f^n(u), f^n(v)) \leq d_{f^n(U)}(f^n(u), f^n(v)) \leq d_{U}(u,v) \textrm{ for all } n\in\mathbb{N},
\end{equation}

\noindent where both inequalities are consequences of Schwarz's lemma. We argue that the left hand side of (\ref{schwarz}) must tend to infinity with $n$, at least in some subsequence. This will be our needed contradiction.

We note that 

\[ |f(u)-1/2|=|g(z_{p_{n_2}})-1/2|=|f_0^{-n_2}(z_{p_{n_2}})-1/2|<1/4 \]

\noindent by Corollary \ref{dist_to_1/2}. We claim that $|f(v)|>1$. Indeed, consider a partition of $\mathbb{H}_r$ into $W_k:=\{z : \pi k<\textrm{Im}(z)<(k+1)\pi, \textrm{Re}(z)>0\}$ over $k\in\mathbb{Z}$, as illustrated to the left of Figure \ref{fig:tau}.
In the course of the proof of Theorem \ref{folding}, the map $\tau:V\rightarrow\mathbb{H}_r$ was replaced by a quasiconformal $\eta=:\hat{\tau}$, and the graph $T$ was decorated with edges to form a graph $T'$, so that $\hat{\tau}:V\setminus T'\rightarrow\mathbb{H}_r$ was quasiconformal and edges of $T'$ were sent to the edges $\{k<\textrm{Im}(z)<(k+1)\pi, \textrm{Re}(z)=0\}$ (see  \cite{Bis15} for details). Pulling the regions $W_k:=\{k<\textrm{Im}(z)<(k+1)\pi\}$ back under $\hat{\tau}$ we have regions $\hat{\tau}^{-1}(W_k)$. Now if $\hat\tau^{-1}(W_k)$ neighbors a $D$-component, recall that for $z\in\hat\tau^{-1}(W_k)$, we have $g(z)=\sigma\left(\hat{\tau}(z)\right)$ where $\sigma=\exp$ on $W_k$. In particular this means that as long as $z\in\hat\tau^{-1}(W_k)$, where $\hat\tau^{-1}(W_k)$ neighbors a $D$-component, then $|g(\hat{\tau}(z))|>1$.

\begin{figure}[!htbp]
\centering
\setlength{\unitlength}{0.9\textwidth}
\includegraphics[width=0.9\textwidth]{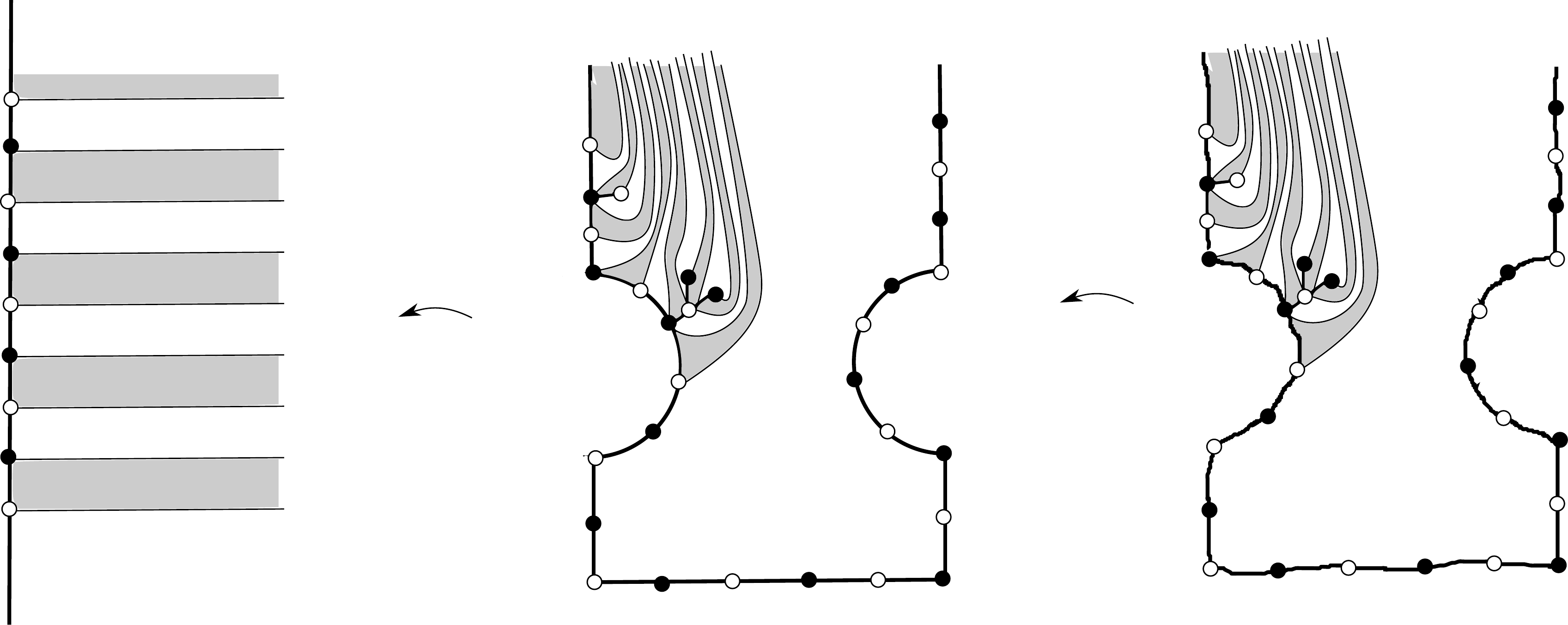}
\put(-1.025,0.298){\scs $3\pi$}
\put(-1.025,0.265){\scs $2\pi$}
\put(-0.9,0.245){\small $W_1$}
\put(-1.02,0.23){\scs $\pi$}
\put(-0.9,0.215){\small $W_0$}
\put(-1.02,0.2){\scs $0$}
\put(-1.03,0.165){\scs $-\pi$}
\put(-0.9,0.15){\small $W_{-2}$}
\put(-1.04,0.135){\scs $-2\pi$}
\put(-0.52,0.1){\small $V$}
\put(-0.72,0.21){\scs $\hat{\tau}$}
\put(-0.31,0.23){\scs $\phi^{-1}$}
\caption{\small Rough sketch of the preimages of the semistrips $W_k$ under $\hat{\tau}$ and $\phi^{-1}$. For simplicity, only a few preimages are shown.}
\label{fig:tau}
\end{figure}


Hence, we just need to argue that $\phi^{-1}(v) \in\hat\tau^{-1}(W_k)$, or $v \in(\hat\tau\circ\phi^{-1})^{-1}(W_k)$, where $\hat\tau^{-1}(W_k)$ neighbors a $D$-component (recall $f=\sigma\circ\hat{\tau}\circ\phi^{-1}$ in $V$). Indeed, $v\in U$ by assumption and $\phi^{-1}(v)\not\in D^{+}_{p_{n_2}}$ by Definition \ref{mu_definition} and since $v\not\in D^{++}_{p_{n_2}}$. Hence $v\in\phi(V)$ or $v\in\phi(\tilde{V})$ where $\tilde{V}$ is the other R-component neighboring $ D^{++}_{p_{n_2}}$, and we may assume without loss of generality that $v\in\phi(V)$. Moreover, $U$ can not intersect a region $(\hat\tau\circ\phi^{-1})^{-1}(W_\ell)\subset \phi(V)$ which does not neighbor $\phi(D_{p_{n_2}})$, the reason being that $U$ also contains points inside $\phi(D_{p_{n_2}})$ and therefore $U$ would have to cross the boundary of some region $(\hat{\tau}\circ\phi^{-1})^{-1}\left(W_l\right)$. Namely $U$ would have to cross $(\hat\tau\circ\phi^{-1})^{-1}(\{y=(l+1)\pi \})\cup(\hat\tau\circ\phi^{-1})^{-1}(\{y=l\pi\})$, but $(\hat\tau\circ\phi^{-1})^{-1}(\{y=(l+1)\pi \})\cup(\hat\tau\circ\phi^{-1})^{-1}(\{y=l\pi\})\subset\mathcal{J}(f)$, since they are sent to $\mathbb{R}$ by $f$, and this is a contradiction. Thus we have established that $|f(v)|>1$.

We have already noted that $|f(u)-1/2|<1/4$. By the expanding properties of the exponential, upon subsequent applications of $f$ we have that the distance between $f(u), f(v)$ increases upon each iterate. Indeed $f^k(u), f^k(v)$ must both lie in $S^+$ for $1\leq k<n_2$ (otherwise the wandering domain would have to cross $\partial  S^+ \subset J(f)$), and we have that $f(z)=\sigma(\lambda\sinh(\phi^{-1}(z))$ in $S^+$ (see (\ref{quasiregular_definition})). Note, for example, that $|f^{n_2}(u)-f^{n_2}(v)|>|\exp^{n_2}(3/4)-\exp^{n_2}(1)|$, and that $|\exp^{n}(3/4)-\exp^{n}(1)|\rightarrow\infty$ as $n\rightarrow\infty$. Next note that we know from the construction that $f^{n_2}(u)\in D^{--}_{p_{n_2}}$. This means that $|f^{n_2+1}(u)-1/2|<1/4$. What about $f^{n_2+1}(v)$? Well as observed in \cite{Laz}, by considering preimages of $\mathbb{R}$ one sees that there is a ray belonging to $\mathcal{J}(f)$ connecting $z_{n}-i$ to $\infty$ over all $n$ (remember $z_n$ is the center of the $D$-component $D_n$). This means that $f^{n_2}(v)$ must lie in an $R$-component neighboring $D_{p_{n_2}}$. Hence, as before, we know that $|f^{n_2+1}(v)|>1$.

Now we start over, only that we have $n_3>n_2$ more iterations of $f(z)=\sigma(\lambda\sinh(\phi^{-1}(z))$ in $S^+$ before the points $f^{n_2+1}(v),f^{n_2+1}(u)$ leave $S^+$. Hence, again the expanding properties of $\exp$, together with the fact that $|f^{n_2+1}(v)|>1, |f^{n_2+1}(u)-1/2|<1/4$, imply that $|f^{n_2+n_3}(v)-f^{n_2+n_3}(u)|>|\exp^{n_3}(3/4)-\exp^{n_3}(1)|$. Hence since $n_k\rightarrow\infty$ as $k\rightarrow\infty$, we know that $|f^{n_2+n_3+...+n_k}(v)-f^{n_2+n_3+...+n_k}(u)|\rightarrow\infty$ as $k\rightarrow\infty$. Moreover since by construction we know that $f^{n_2+n_3+...+n_k}(u)\in D_{p_{n_k}}$, we have that $d_{\mathbb{H}}(f^{n_2+n_3+...+n_k}(u), f^{n_2+n_3+...+n_k}(v))\rightarrow\infty$, which is our needed contradiction. Thus we conclude that $f^{n_2}(v')=v\not\in U=f^{n_2}(U_1)$, and so $v'\not\in U_1$. It remains to show that $v'\not\in U_n$ for $n\geq2$.

 Note that $v'\in\mathbb{D}$ as $g(D_{p_1})=\mathbb{D}$, whereas for $2\leq k \leq n_2$, $f^{k}(U_1)\cap\mathbb{D}=\emptyset$. We argue that $v'\not\in f^{n_2+1}(U_1)$, where we note that $f^{n_2+1}(U_1)$ is the Fatou component containing $f^{-n_3}(D_{p_{n_3}}^-)$. The same argument that shows $v\not\in U$ shows that the  Fatou component containing $f^{-n_3}(D_{p_{n_3}}^-)$ is contained in the disc centered at $ f^{-n_3}(z_{p_{n_3}})$ of radius $C\cdot|(f^{-n_3})'(g^{n_3}(1/2))|(1+2\mu_{p_{n_3}})$, where $C\sim1$ is a constant determined by Theorem \ref{thm:Koebe}. That $v'$ is not contained in this disc follows from:
 
\begin{align}\hspace{5mm} |v'-f^{-n_3}(z_{p_{n_3}})| \geq 
\\ \hspace{5mm} |v' - f^{-n_3}(g^{n_3}(1/2))| - | f^{-n_3}(g^{n_3}(1/2)) - f^{-n_3}(z_{p_{n_3}})| \geq  \nonumber
\\ \hspace{5mm}  |f^{-n_2}(z_{p_{n_2}}) - f^{-n_3}(g^{n_3}(1/2))| - |v'-f^{-n_2}(z_{p_{n_2}})| - | f^{-n_3}(g^{n_3}(1/2)) - f^{-n_3}(z_{p_{n_3}})| \geq \nonumber
\\ \dist(f^{-n_2}(z_{p_{n_2}}),\mathbb{R}) - |v'-f^{-n_2}(z_{p_{n_2}})| - | f^{-n_3}(g^{n_3}(1/2)) - f^{-n_3}(z_{p_{n_3}})| \geq \nonumber
\\ \textrm{ (Theorem \ref{thm:Koebe}) } \hspace{5mm} C' \left( \pi\left|  (f^{-n_2})'(g^{n_2}(1/2))  \right| - \left|  (f^{-n_2})'(g^{n_2}(1/2))  \right| - \pi\left|  (f^{-n_3})'(g^{n_3}(1/2))  \right| \right)\gg  \nonumber
\\ C|(f^{-n_3})'(g^{n_3}(1/2))|(1+2\mu_{p_{n_3}}).   \nonumber
\end{align}

\noindent where the first two inequalities follow from the triangle inequality. Similar arguments show that for $k>n_2+1$, $v'\not\in f^k(U_1)$. Thus the critical values of $f$ associated to $g|_{D_{p_1}}$ have empty intersection with the forward orbit of the wandering component containing $\phi(D_{p_1})$. A similar argument shows that the critical values of $f$ associated to $g|_{D_{p_{n_k}}}$ for $k>1$ have empty intersection with the forward orbit of this wandering component. The only remaining critical values by Theorem \ref{g_extension} are $0$, $\pm1$, which escape to infinity under $f$.








\end{proof}

\begin{rem} The argument given above is a refinement of that given in \cite{Laz} to show that the wandering component of \cite{Bis15} is a bounded subset of the plane. Indeed, the proof of Proposition \ref{univalence_prop} shows that the Fatou component containing $\phi(D_{p_1}^-)$ is contained in $D(z_{p_1}, 1+2\mu_{p_1})$. \end{rem}

\nocite{*}
\bibliographystyle{alpha}
\newcommand{\etalchar}[1]{$^{#1}$}

\end{document}